\numberwithin{figure}{section}
\numberwithin{table}{section}
\numberwithin{equation}{section}
\numberwithin{theorem}{section}
\numberwithin{lemma}{section}
\numberwithin{algorithm}{section}
\numberwithin{remark}{section}
\let\oldequation\equation
\let\oldendequation\endequation
\renewenvironment{equation}
{\linenomathNonumbers\oldequation}
{\oldendequation\endlinenomath}
\let\oldalign\align
\let\oldendalign\endalign
\renewenvironment{align}
{\linenomathNonumbers\oldalign}
{\oldendalign\endlinenomath}
\begin{document}


\title{Double precision is not necessary for LSQR for solving discrete linear ill-posed problems
	\thanks{This work was supported in part by the National Natural Science Foundation of China under Grant No. 3192270206.
}}

\titlerunning{Double precision is not necessary for LSQR}  

\author{Haibo Li}


\institute{Haibo Li \at
Institute of Computing Technology, Chinese Academy of Sciences, Beijing 100190, China \\
              \email{haibolee1729@gmail.com}   
}


\maketitle

\begin{abstract}
The growing availability and usage of low precision floating point formats attracts many interests of developing lower or mixed precision algorithms for scientific computing problems. In this paper we investigate the possibility of exploiting mixed precision computing in LSQR for solving discrete linear ill-posed problems. Based on the commonly used regularization model for linear inverse problems, we analyze the choice of proper computing precision in the two main parts of LSQR, including the construction of Krylov subspace and updating procedure of iterative solutions. We show that, under some mild conditions, the Lanczos vectors can be computed using single precision without loss of any accuracy of the final regularized solution as long as the noise level is not extremely small. We also show that the most time consuming part for updating iterative solutions can be performed using single precision without sacrificing any accuracy. The results indicate that several highly time consuming parts of the algorithm can be implemented using lower precisions, and provide a theoretical guideline for implementing a robust and efficient mixed precision variant of LSQR for solving discrete linear ill-posed problems. Numerical experiments are made to test two mixed precision variants of LSQR and confirming our results.
\keywords{mixed precision \and linear ill-posed problem \and regularization \and LSQR \and roundoff unit \and semi-convergence}
\subclass{65F22 \and 65F10 \and 65G50}
\end{abstract}

\section{Introduction}
Although for most traditional scientific computing problems, computations are carried out with double precision (64-bit) rather than lower precisions such as single (32-bit) or half (16-bit) precision, on modern computing architectures, the performance of 32-bit operations is often at least twice as fast as that of 64-bit operations \cite{abdelfattah2021survey}, which stimulates the trial of using lower precision floating point formats in an algorithm. To exploit this computation power without sacrificing accuracy of the final result, numerical algorithms have to be designed that use lower or mixed precision formats. By using a combination of 64-bit and 32-bit (even 16-bit) floating point arithmetic, the performance of many numerical algorithms can be significantly enhanced while maintaining the 64-bit accuracy of the resulting solution. New mixed precision variants of many numerical linear algebra algorithms have been recently proposed, such as matrix multiplications \cite{Ahmad2019,Blanchard2020}, LU and QR matrix factorizations \cite{Lopez2020,Yang2021}, Krylov solvers \cite{Grat2019,Carson2020,gratton2021minimizing,durand2022accelerating} and many others \cite{Higham2019,Amestoy2021}.

In this paper, we investigate how to exploit mixed precision computing for solving discrete linear ill-posed problems. This type of problems typically arise from the numerical solution of inverse problems that appear in various applications of science and engineering, such as image deblurring, geophysics, computerized tomography and many others; see e.g., \cite{Natterer2001,Kaip2006,Hansen2006,Richter2016}. A basic linear inverse problem leads to a discrete linear system of the form
\begin{equation}\label{1.1}
	Ax = b, \ \ \ \ b = Ax_{ex}+e,
\end{equation}
where the matrix $A\in \mathbb{R}^{m\times n}$ with $m\geq n$ without loss of generality, and the right-hand side $b$ is a perturbed version of the unknown exact observation $b_{ex}=Ax_{ex}$. In this paper we suppose $e$ is a Gaussian white noise. The problem is ill-posed in the sense that $A$ is extremely ill-conditioned with its singular values decaying gradually towards zero without any noticeable gap, which leads to that the naive solution $x_{nai}=A^{\dag}b$ of \ref{1.1} is a poor approximation to the exact solution $x_{ex}=A^{\dag}b_{ex}$, where $``\dag "$ denotes the Moore-Penrose inverse of a matrix. Therefore, some forms of regularization must be used to deal with the noise $e$ in order to extract a good approximation to $x_{ex}$.

One of the popular regularization techniques is the Tikhonov regularization \cite{Tikhonov1977}, in which a quadratic penalty is added to the objective function:
\[ x_{\lambda} = \arg \min_{x \in \mathbb{R}^{n}}\{\lVert Ax-b \lVert^{2}
	+\lambda\lVert Lx\lVert^{2}\} ,\] 
where $\lambda>0$ is the regularization parameter and $L\in\mathbb{R}^{p\times n}$ is the regularization matrix. Throughout the rest of the paper $\|\cdot\|$ always denotes either the vector or matrix 2-norm. 
The proper choice of $L$ depends on the particular application, which should be chosen to yield a regularized solution with some known desired features of $x_{ex}$. A suitable value of $\lambda$ should have a good balance between the data fidelity term $\|Ax-b\|$ and the regularization term $\|Lx\|$, only in which case we can get a regularized solution that is a good approximation to $x_{ex}$. Although many types of regularization parameter choice rules have been proposed, such as discrepancy principle (DP)~\cite{Morozov1966}, unbiased predictive risk estimator \cite{Vogel2002}, generalized cross validation \cite{Golub1979} and L-curve criterion \cite{Hansen1992}, it is often computationally very expensive to choose a suitable $\lambda$ for large scale problems, since many different values of $\lambda$ must be tried to get $x_{\lambda}$.

For large scale ill-posed problems, iterative regularization method is the soundest choice. For standard-form regularization with $L=I_n$, Krylov subspace based methods such as LSQR \cite{Paige1982,Bjorck1988,Gazzola2015} are the most commonly used. The methods project \eqref{1.1} onto a sequence of lower-dimensional Krylov subspaces, and then solves the projected small scale problems, where the iteration number plays the role of regularization parameter \cite{Hansen1998,Hansen2010}. This approach usually exhibits semi-convergence: as the iteration proceeds, the iterative solution first approximates $x_{ex}$ while afterwards the noise $e$ starts to deteriorate the solution so that it gradually diverge from $x_{ex}$ and instead converges to $x_{nai}$. Therefore, the iteration must be stopped early properly by using a regularization parameter choice rule \cite{Borges2015,Reichel2020}. The semi-convergence behavior can be mitigated by using a hybrid method, which applies a standard regularization technique, such as Tikhonov regularization or truncated SVD, to the projected problem at each iteration \cite{Kilmer2001,Chung2008,Renaut2017}.

In this paper, we focus on the LSQR algorithm for iteratively solving large scale discrete linear ill-posed problems that is based on the Lanczos bidiagonalization \cite{Paige1982}. The motivation for this work is to answer whether lower precisions can be used in some parts of the algorithm while maintaining the 64-bit accuracy of the regularized solutions. The LSQR for linear ill-posed problems mainly includes two parts, that is the construction of Krylov subspace by Lanczos bidiagonalization and updating procedure of iterative solutions. In addition, a proper iteration number should be estimated to stop the iteration near the semi-convergence point. Since $b$ is contaminated by the noise $e$, we can never get a regularized solution with error as small as $\|e\|$ \cite{Engl2000,Hansen1998}, and this error is usually much bigger than the roundoff unit of double precision with the value $2^{-53}$. This fact inspires us that double precision may be not necessary for LSQR to compute a regularized solution with the same accuracy as the best regularized one. To the best of our knowledge, however, there is still no theoretical analysis about how to choose proper lower computing precision in LSQR for linear ill-posed problems. This issue is crucial for deciding which lower precision format should be used in each part of the algorithm to get a more efficient mixed precision implementation.

We study the lower precision computing for the two main parts of LSQR, including the construction of Krylov subspace and updating iterative solutions. In finite precision arithmetic, we implement the Lanczos bidiagonalization in LSQR with full reorthogonalization of Lanczos vector, which is a frequently used strategy to avoid slowing down and irregular convergence of iterative solutions \cite{Larsen1998,Hnetyn2009}. First, under an ideal model describing the linear ill-posed problem \eqref{1.1}, our result estimates an upper bound on the proper value of $\mathbf{u}$ corresponding to the used computing precision for constructing Lanczos vectors with full reorthogonalization, and it indicates that for not extremely small noise levels we can exploit single precision for this part without loss of accuracy of the final regularized solution. Second, for the updating procedure part, we theoretically show that, under a condition which can be almost always satisfied, the updated regularized solutions can be computed using single precision without sacrificing any accuracy. We also investigate the L-curve and discrepancy principle methods for estimating the optimal early stopping iteration combined with the mixed precision implementation of LSQR. Overall, the results theoretically show that several highly time consuming parts of LSQR for solving discrete linear ill-posed problems can be implemented using lower precisions, which has a great potential of defeating the double precision implementation in computation efficiency. The results can guide us towards a mixed precision implementation that is both robust and efficient, and should be considered by application developers for practical problems.

The paper is organized as follows. We start in Section \ref{sec2} with a brief review of regularization theory and algorithm of linear ill-posed inverse problems, and the IEEE 754 floating point standard. Next, in Section \ref{sec3}, we analyze the proper choice of $\mathbf{u}$ corresponding to the used computing precision for constructing Lanczos vectors with full reorthogonalization and give an upper bound. In Section \ref{sec4}, we analyze the possibility of using lower precision for updating iterative solutions and give a mixed precision variant of LSQR. We also discuss the estimation of optimal early stopping iteration for the mixed precision LSQR. Some numerical experimental results are presented in Section \ref{sec5} and we conclude the paper in Section \ref{sec6}.

\section{Preliminaries}\label{sec2}
In this section, we review some basic knowledge of regularization theory of linear ill-posed inverse problems and the LSQR regularization algorithm, we also review finite precision computing based on the IEEE 754 Standard floating point number system.

\subsection{\textbf{Regularization of linear ill-posed problems and LSQR}}
Suppose the singular value decomposition (SVD) of $A$ is
\begin{equation}\label{eqsvd}
	A=U\left(\begin{array}{c} \Sigma \\ \mathbf{0} \end{array}\right) V^{T} ,
\end{equation}
where $U = (u_1,\ldots,u_m)\in\mathbb{R}^{m\times m}$ and $V = (v_1,\ldots,v_n)\in\mathbb{R}^{n\times n}$ are orthogonal, $\Sigma = {\rm diag} (\sigma_1,\ldots,\sigma_n)\in\mathbb{R}^{n\times n}$ with singular values $\sigma_1\geq\sigma_2 \geq\cdots \geq\sigma_n>0$. The naive solution to \eqref{1.1} is
\[x_{nai}=\sum\limits_{i=1}^{n}\frac{u_i^{T}b}{\sigma_i}v_i =
\sum\limits_{i=1}^{n}\frac{u_i^{T}b_{ex}}{\sigma_i}v_i +
\sum\limits_{i=1}^{n}\frac{u_i^{T}e}{\sigma_i}v_i
=x_{ex}+\sum\limits_{i=1}^{n}\frac{u_i^{T}e}{\sigma_i}v_i .\]
Note that the second term in $x_{nai}$ is extremely large since $\sigma_{i}$ decay to zero, making $x_{nai}$ a meaningless solution. A direct regularized method is the truncated SVD (TSVD) method, which forms $x_k^{tsvd}$ by truncating the first $k$ components of $x_{nai}$ corresponding to large singular values:
$x_k^{tsvd}=\sum\limits_{i=1}^{k}\frac{u_i^{T}b}{\sigma_i}v_i$. Regularization theory of linear inverse problems can be used to investigate the accuracy of the regularized solution to \eqref{1.1}, among which the \textit{discrete Picard condition(DPC)} plays a central role. We give a brief review in the following.

The DPC for the exact right-hand side $b_{ex}$ can be written in the following popular simplifying model \cite{Hansen1998,Hansen2010}:
\begin{equation}\label{3.4}
	|u_{i}^{T}b_{ex}|=\rho_{0}\sigma_{i}^{1+\beta},  \ \ \beta>0, \ i=1,2,\dots,n,
\end{equation}
where $\beta$ is a model parameter that controls the decay rates of $|u_{i}^{T}b_{ex}|$\footnote{In \cite[\S 4.6]{Hansen2010}, the corresponding model is $|u_{i}^{T}b_{ex}|=\sigma_{i}^{1+\beta}$, which does not include the constant $\rho_{0}$. In fact, Hansen \cite[p.68]{Hansen2010} points out that ``while this is, indeed, a crude model, it reflects the overall behavior often found in real problems".}. The DPC implies that the noisy coefficients $u_{i}^{T}b$ gradually decay in average and are larger than $|u_{i}^{T}e|$ for $i=1,2\dots$ until the noise dominates. Suppose that the noise in $|u_{i}^{T}b|$ starts to dominate at $k=k^{*}+1$, i.e., $k^{*}$ is the transition point such that \cite[\S 3.5.1]{Hansen2010}
\begin{equation}\label{trans}
	|u_{k^{*}}^{T}b|\approx|u_{k^{*}}^{T}b_{ex}|> |u_{k^{*}}^{T}e|, 
	\ \ |u_{k^{*}+1}^{T}b_{ex}|\approx|u_{k^{*}+1}^{T}e|. 
\end{equation}
Under the assumption that $e\in\mathbb{R}^{m}$ is a Gaussian white noise, it is shown that $|u_{i}^{T}e|\approx m^{-1/2}\|e\|$ \cite[\S 3.5.1]{Hansen2010}. Thus the DPC for noisy $b$ can be written in the following form:
\begin{equation}\label{DPC}
	|u_{i}^{T}b|=|u_{i}^{T}b_{ex}+u_{i}^{T}e|\approx
	\left\{
	\begin{array}{l}
		\rho_{0}\sigma_{i}^{1+\beta} , \ 1\leq i \leq k^{*} ; \\
		m^{-1/2}\|e\| , \ i \geq k^{*}+1. 
	\end{array}
	\right. 
\end{equation}

The accuracy of the best regularized solution to \eqref{1.1} is closely connected with the DPC \eqref{DPC}, which can be reflected by the \textit{effective resolution limit} denoted by $\eta_{res}$. The effective resolution limit of \eqref{1.1} denotes the smallest coefficient $|v_{i}^{T}x_{ex}|$ that can be recovered from the given $A$ and noisy $b$ \cite[\S 4.1]{Hansen1998}, and it is shown in \cite[\S 4.5]{Hansen1998} that
$\eta_{res} \approx (m^{-1/2}\lVert e\lVert)^{\frac{\beta}{1+\beta}}$.
The accuracy of the best regularized solution $x_{opt}$ is dependent on $\eta_{res}$, that is, one can only hope that $x_{opt}$ reaches an accuracy corresponding to the effective resolution limit, which means
\begin{equation}\label{3.7}
	\frac{\|x_{opt}-x_{ex}\|}{\|x_{ex}\|} \geq C_1\varepsilon^{\frac{\beta}{1+\beta}}
\end{equation}
with a moderate constant $C_1$, where $\varepsilon = \|e\|/\|b_{ex}\|<1$ is the noise level; see e.g., \cite{Hansen1998,Engl2000} for more details. Note that $\|e\|<\|b_{ex}\|$; otherwise all information from $b_{ex}$ is lost in $b$. 
In particular, it is known from \cite{Hansen1998,Engl2000} that $x_{k^*}^{tsvd}$ is a 2-norm filtering best possible regularized solution of \eqref{1.1} when only deterministic 2-norm filtering regularization methods are taken into account.


For large scale ill-posed problems, the LSQR algorithm with early stopping is the most common used iterative regularization method, which is based on Lanczos bidiagonalization of $A$ with starting vector $b$, as described in Algorithm \ref{alg1}.
\begin{algorithm}[htb]
	\caption{$k$-step Lanczos bidiagonalization}\label{alg1}
	\begin{algorithmic}[1]
		\State Let $\beta_{1}=\|b\|, \ p_{1}=b/\beta_{1}$
		\State $\alpha_{1}=\|A^{T}p_{1}\|, \ q_1=A^{T}p_{1}/\alpha_{1}$ 
		\For{$j=1,2,\dots,k$}
		\State $s_{j}=Aq_{j}-\alpha_{j}p_{j}$
		\State $\beta_{j+1}=\|s_{j}\|$, \ $p_{j+1}=s_{j}/\beta_{j+1}$ 
		\State $r_{j}=A^{T}p_{j+1}-\beta_{j+1}q_{j}$ 
		\State $\alpha_{j+1}=\|r_{j}\|$, \ $q_{j+1}=r_{j}/\alpha_{j+1}$
		\EndFor
	\end{algorithmic}
\end{algorithm}

In exact arithmetic, the $k$-step Lanczos bidiagonalization produces a lower bidiagonal matrix 
\[ B_{k}=\begin{pmatrix}
		\alpha_{1} & & & \\
		\beta_{2} &\alpha_{2} & & \\
		&\beta_{3} &\ddots & \\
		& &\ddots &\alpha_{k} \\
		& & &\beta_{k+1} 
	\end{pmatrix} \in \mathbb{R}^{(k+1)\times k} ,\]
and two groups of Lanczos vectors $\{p_{1},\dots,p_{k+1}\}$ and $\{q_{1},\dots,q_{k+1}\}$ that are orthonormal bases of Krylov subspaces $\mathcal{K}_{k+1}(AA^{T},b)$ and $\mathcal{K}_{k+1}(A^{T}A,A^{T}b)$, respectively. The $k$-step Lanczos bidiagonalization can be written in the matrix form
\begin{align}
	& P_{k+1}(\beta_{1}e_{1}^{(k+1)}) = b , \label{2.1} \\
	& AQ_{k} = P_{k+1}B_{k} , \label{2.2} \\
	& A^{T}P_{k+1} = Q_{k}B_{k}^{T}+\alpha_{k+1}q_{k+1}(e_{k+1}^{(k+1)})^{T} , \label{2.3}
\end{align}
where $e_{i}^{(l)}$ denotes the $i$-th canonical basis vector of $\mathbb{R}^{l}$, and $P_{k+1}=(p_{1},\dots, p_{k+1})$ and $Q_{k}=(q_{1}, \dots, q_{k+1})$ are two orthonormal matrices. The LSQR for \eqref{1.1} is mathematically equivalent to the conjugate gradient (CG) method applied to the normal equation of $\min_{x\in\mathbb{R}^{n}}\|Ax-b\|$, i.e. $A^TAx=A^Tb$, which seeks approximations to $x_{ex}$ from the $k$ dimensional Krylov subspace $\mathcal{K}_{k}(A^{T}A,A^{T}b)=\mathcal{R}(V_{k})$ starting with $k=1$ onwards, and the iteration should be terminated at a proper step near the semi-convergence point to get a good regularized solution \cite{Bjorck1988}. At the $k$-th step, by \eqref{2.1} and \eqref{2.2} we have
\begin{align*}
		\min\limits_{x=Q_{k}y} \lVert Ax - b \lVert 
		= \min\limits_{y\in \mathbb{R}^{k}} \lVert B_{k}y -\beta_{1}e_{1}^{(k+1)}\lVert ,
\end{align*}
and thus the $k$-step LSQR solution is
\begin{equation}\label{2.6}
	x_{k}=Q_{k}y_{k}, \ \
	y_{k}=\mathrm{arg}\min\limits_{y\in \mathbb{R}^{k}} \lVert B_{k}y -\beta_{1}e_{1}^{(k+1)}\lVert 
	= B_{k}^{\dag}(\beta_{1}e_{1}^{(k+1)}) .
\end{equation}
We note that if $\alpha_{k+1}$ or $\beta_{k+1}$ is zero, then the iteration terminates and $x_{k}=x_{nai}$ \cite{Paige1982}. This case rarely happens in real computations and we assume that the iteration does not terminate.

From the above description, the computation of $x_k$ can be divided into two parts. The first part is the Lanczos bidiagonalization that generates two orthonormal bases of Krylov subspaces $\mathcal{K}_{k+1}(AA^{T},b)$ and $\mathcal{K}_{k+1}(A^{T}A,A^{T}b)$, respectively, while the second part is solving the projected problem \eqref{2.6} to obtain $x_k$. In the practical implementation, there is a recursive formula to update $x_{k+1}$ from $x_k$ without solving the projected least squares problems at each iteration. This updating procedure will be investigated in Section \ref{sec4}.


\subsection{\textbf{Finite precision computing}}
In practical computational tasks, the accuracy of a computed result and the time needed to complete the algorithm both heavily depend on the floating point format used for storage and arithmetic operations. Here we review the IEEE 754 Standard floating point number format, which is composed of a sign bit, an exponent $\eta$, and a significand $t$:
\[x = \pm \mu\times 2^{\eta-t},\] 
where $\mu$ is any integer in $[0,2^{t}-1]$ and $\eta$ is an integer in  $[\eta_{\text{min}}, \eta_{\text{max}}]$.
Roughly speaking, the length of the exponent determines the value range of a floating point format, and the length of the significand determines the relative accuracy of the format in that range. A short analysis of floating point operations \cite[Theorem 2.2]{Higham2002} shows that the relative error is controlled by the roundoff unit $\mathbf{u}:=\frac{1}{2}\cdot2^{1-t}$. Table \ref{tab2.1} shows main parameters of the three different floating point number formats.
\begin{table}[h]
	\centering
	\caption{Parameters for various floating-point formats. “Range” denotes the order of magnitude of the smallest positive (subnormal) $x_{min}^{s}$ and smallest and largest positive normalized floating-point numbers.}
\setlength{\tabcolsep}{1mm}{
\begin{tabular}{*{6}{c}}
	\toprule
	Type  & Size & \multicolumn{3}{c}{Range} & Roundoff unit \\
	\cmidrule(lr){3-5}
	& (bits)  & $x_{min}^{s}$ & $x_{min}$ & $x_{max}$ & $\mathbf{u}$ \\
	\midrule
	half precision   & 16 & $5.96\times10^{-8}$ & $6.10\times10^{-5}$ & $6.55\times10^{4}$ & $4.88\times  10^{-4}$ \\
	single precision  & 32 & $1.40\times10^{-45}$ & $1.18\times10^{-38}$ & $3.40\times10^{38}$ & $5.96\times10^{-8}$ \\
	double precision  & 64 & $4.94\times10^{-324}$ & $2.22\times10^{-308}$& $1.80\times10^{308}$ &$1.11\times10^{-16}$ \\
	\bottomrule
\end{tabular}}
\label{tab2.1}
\end{table}


In finite precision arithmetic, the Lanczos vectors $u_i$ and $v_i$ computed by the Lanczos bidiagonalization gradually lose their orthogonality, and it may slow down the convergence of iterative solutions and make the propagation of noise during the iterations rather irregular \cite{Meurant2006,Hnetyn2009}. A frequently used strategy is implementing the Lanczos bidiagonalization with full reorthogonalization (LBFRO) \footnote{In full reorthogonalization, $u_k$ and $v_k$ are reorthogonalized against all previous vectors $\{u_1,\dots, u_{k-1}\}$ and $\{v_1,\dots, v_{k-1}\}$ as soon as they have been computed. This adds an arithmetic cost of about $4(m + n)k^2$ flops, which is affordable if $k \ll\min\{m,n\}$.} to maintain stability of convergence. In the rest of the paper, we investigate the \textit{LSQR implemented with full reorthogonalization of Lanczos vectors} in finite precision. From now on, notations such as $P_k$, $B_k$, $\alpha_k$, etc. denote the computed quantities in finite precision computing.

Define the orthogonality level of Lanczos vectors $\{p_{1},\dots,p_{k}\}$ and $\{q_{1},\dots,q_{k}\}$ as
\[\mu_{k} =  \lVert \mathbf{SUT}(I_{k}-P_{k}^{T}P_{k})\lVert, \ \ \
\nu_{k} =  \lVert \mathbf{SUT}(I_{k}-Q_{k}^{T}Q_{k})\lVert,\]
where $\mathbf{SUT(\cdot)}$ denotes the strictly upper triangular part of a matrix. The following result has been established for the $k$-step Lanczos bidiagonalization with reorthogonalization (not necessarily full reorthogonalization) \cite{Li2022}. 
\begin{theorem}\label{thm2.1}
	For the $k$-step Lanczos bidiagonalization with reorthogonalization, if $\nu_{k+1}<1/2$ and $\mu_{k+1} < 1/2$, then there exist two orthornormal matrices $\bar{P}_{k+1}=(\bar{p}_{1},\dots,\bar{p}_{k+1})\in\mathbb{R}^{m\times(k+1)}$ and $\bar{Q}_{k+1}=(\bar{q}_{1},\dots,\bar{q}_{k+1})\in \mathbb{R}^{n\times (k+1)}$ such that
	\begin{align}
		& \bar{P}_{k+1}(\beta_{1}e_{1}^{(k+1)}) = b+\delta_b , \label{2.10} \\
		& (A+E)\bar{Q}_{k} = \bar{P}_{k+1}B_{k} , \label{2.11} \\
		& (A+E)^{T}\bar{P}_{k+1} = \bar{Q}_{k}B_{k}^{T}+\alpha_{k+1}\bar{q}_{k+1}(e_{k+1}^{(k+1)})^{T},
	\end{align}
	where $E$ and $\delta_b$ are perturbation matrix and vector, respectively. We have error bounds
	\[\lVert \bar{P}_{k+1}-P_{k+1} \lVert \leq 2\mu_{k+1} + \mathcal{O}(\mu_{k+1}^{2}) , \ \ \ 
		\lVert \bar{Q}_{k+1}-Q_{k+1} \lVert \leq \nu_{k+1} + \mathcal{O}(\nu_{k+1}^{2}) ,\]
	and
	\[\lVert E \lVert =
	\mathcal{O}(c(n,k)\lVert A \lVert(\mathbf{u}+\nu_{k+1}+\mu_{k+1})),
		\ \ \ \lVert \delta_b \lVert = O(\lVert b \lVert \mathbf{u}).\]
	where $c(n,k)$ is a moderately growing constant depends on $n$ and $k$.
\end{theorem}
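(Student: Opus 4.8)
The plan is to run a forward rounding-error analysis of the reorthogonalized recurrence in Algorithm~\ref{alg1}, round the computed bases to exactly orthonormal matrices $\bar P_{k+1},\bar Q_{k+1}$, and then exhibit a \emph{single} backward perturbation $(E,\delta_b)$ making both relations exact. For the first step I would annotate each executed line with its local error, e.g. $s_j=\mathrm{fl}(Aq_j-\alpha_jp_j)=Aq_j-\alpha_jp_j+f_j$ and $r_j=A^Tp_{j+1}-\beta_{j+1}q_j+g_j$, handling the reorthogonalization updates (sequences of inner products and axpy operations) the same way. Accumulating columns gives the computed identities
\[ AQ_k=P_{k+1}B_k+F,\qquad A^TP_{k+1}=Q_kB_k^T+\alpha_{k+1}q_{k+1}(e_{k+1}^{(k+1)})^T+G, \]
with $\|F\|,\|G\|=O(c(n,k)\|A\|\mathbf{u})$, together with $\beta_1p_1=b+\delta_b'$ and $\|\delta_b'\|=O(\|b\|\mathbf{u})$ read off directly from $p_1=\mathrm{fl}(b/\beta_1)$.

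Because $\mu_{k+1}<1/2$ and $\nu_{k+1}<1/2$, the Gram matrices $P_{k+1}^TP_{k+1}$ and $Q_{k+1}^TQ_{k+1}$ are within $1/2$ of the identity and hence positive definite, so they admit thin QR factorizations $P_{k+1}=\bar P_{k+1}R_P$ and $Q_{k+1}=\bar Q_{k+1}R_Q$ with $\bar P_{k+1},\bar Q_{k+1}$ orthonormal and $R_P,R_Q$ upper triangular. Since $R_P^TR_P-I=P_{k+1}^TP_{k+1}-I$, the strictly upper part of $R_P-I$ agrees to first order with $\mathbf{SUT}(I_{k+1}-P_{k+1}^TP_{k+1})$ and its diagonal is $O(\mathbf{u})$ (near-unit column norms), and similarly for $R_Q$; writing $P_{k+1}-\bar P_{k+1}=\bar P_{k+1}(R_P-I)$ and the analogue for $Q$, bounding the triangular factors yields $\|\bar P_{k+1}-P_{k+1}\|\le 2\mu_{k+1}+O(\mu_{k+1}^2)$ and $\|\bar Q_{k+1}-Q_{k+1}\|\le\nu_{k+1}+O(\nu_{k+1}^2)$.

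Writing $P_{k+1}=\bar P_{k+1}+\Delta P$, $Q_k=\bar Q_k+\Delta Q$ and setting $\rho_1=\bar P_{k+1}B_k-A\bar Q_k$, $\rho_2=\bar Q_kB_k^T+\alpha_{k+1}\bar q_{k+1}(e_{k+1}^{(k+1)})^T-A^T\bar P_{k+1}$, relation \eqref{2.11} and its $A^T$-counterpart amount to finding one $E$ with $E\bar Q_k=\rho_1$ and $E^T\bar P_{k+1}=\rho_2$. The conceptual key is that these two constraints are \emph{compatible}: each forces $\bar P_{k+1}^T(A+E)\bar Q_k=B_k$, using $\bar P_{k+1}^T\bar P_{k+1}=I_{k+1}$ and $\bar q_{k+1}\perp\bar Q_k$, so that $\bar P_{k+1}^T\rho_1=\rho_2^T\bar Q_k$; a common solution can then be written explicitly as
\[ E=\rho_1\bar Q_k^T+\bar P_{k+1}\rho_2^T-\bar P_{k+1}(\bar P_{k+1}^T\rho_1)\bar Q_k^T, \]
which satisfies both relations and obeys $\|E\|\le 2\|\rho_1\|+\|\rho_2\|$.

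To close, I would bound the residuals. Using the computed identities and $\bar Q_k=Q_k-\Delta Q$, $\bar P_{k+1}=P_{k+1}-\Delta P$ gives $\rho_1=A\Delta Q-\Delta P\,B_k-F$ and an analogous formula for $\rho_2$, so $\|\rho_1\|,\|\rho_2\|\le(\|A\|+\|B_k\|)(\|\Delta P\|+\|\Delta Q\|)+\|F\|+\|G\|$. Since $\|B_k\|=O(\|A\|)$, $\|\Delta P\|=O(\mu_{k+1})$ and $\|\Delta Q\|=O(\nu_{k+1})$, this produces the stated $\|E\|=O(c(n,k)\|A\|(\mathbf{u}+\nu_{k+1}+\mu_{k+1}))$, while $\delta_b=\delta_b'+\beta_1(\bar p_1-p_1)$ keeps $\|\delta_b\|=O(\|b\|\mathbf{u})$. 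I expect the genuine obstacle to be the very first step: the rounding errors must be accumulated through the full reorthogonalization, where each new Lanczos vector is corrected against all its predecessors, and one must show that these many inner products and updates contribute only $O(c(n,k)\|A\|\mathbf{u})$ to $F$ and $G$ rather than growing faster in $k$. The compatibility that makes a single $E$ possible, by contrast, is exact and follows cleanly from the orthonormality of $\bar P_{k+1}$ and $\bar Q_{k+1}$.
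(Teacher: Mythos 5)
The paper does not prove Theorem~\ref{thm2.1} at all: it imports the statement from \cite{Li2022} ("The following result has been established\dots"), so there is no in-paper argument to compare yours against. What follows assesses your proposal on its own terms.

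Your middle and closing steps are sound, and in fact the part you treat as routine is the part that fails as stated. The compatibility observation is correct and is the right key idea: with $\rho_1=\bar P_{k+1}B_k-A\bar Q_k$ and $\rho_2=\bar Q_kB_k^T+\alpha_{k+1}\bar q_{k+1}(e_{k+1}^{(k+1)})^T-A^T\bar P_{k+1}$, orthonormality of $\bar P_{k+1}$ and $\bar Q_{k+1}$ gives $\bar P_{k+1}^T\rho_1=B_k-\bar P_{k+1}^TA\bar Q_k=\rho_2^T\bar Q_k$, and one checks directly that your $E=\rho_1\bar Q_k^T+\bar P_{k+1}\rho_2^T-\bar P_{k+1}(\bar P_{k+1}^T\rho_1)\bar Q_k^T$ satisfies both $E\bar Q_k=\rho_1$ and $E^T\bar P_{k+1}=\rho_2$, with $\|E\|\le 2\|\rho_1\|+\|\rho_2\|$. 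Likewise, your choice of QR (rather than polar/symmetric) orthogonalization is exactly what saves $\|\delta_b\|=O(\|b\|\mathbf{u})$: the first column of the Q-factor is just $p_1/\|p_1\|$, so $\beta_1\|\bar p_1-p_1\|=O(\|b\|\mathbf{u})$, whereas a symmetric orthogonalization spreads the correction over all columns and would only give $O(\|b\|\mu_{k+1})$. You rely on this but never say why the QR structure matters; it should be made explicit.

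The genuine gap is your step 1. For the \emph{reorthogonalized} recurrence, the computed quantities do not satisfy $AQ_k=P_{k+1}B_k+F$ with $\|F\|=O(c(n,k)\|A\|\mathbf{u})$. The executed recursion is $\beta_{j+1}p_{j+1}=Aq_j-\alpha_jp_j-\sum_{i\le j}(p_i^Ts_j)p_i+O(\mathbf{u})$, and the reorthogonalization coefficients $p_i^Ts_j\approx p_i^TAq_j$ are controlled by the orthogonality level of the \emph{previously} computed vectors (via $A^Tp_i\approx\alpha_iq_i+\beta_{i+1}q_{i+1}$, so $p_i^TAq_j=O(\|A\|(\nu_{k+1}+\mathbf{u}))$), not by roundoff alone. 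A correct accumulation therefore yields $\|F\|,\|G\|=O(c(n,k)\|A\|(\mathbf{u}+\mu_{k+1}+\nu_{k+1}))$. This error is repairable, because the target bound on $\|E\|$ already tolerates $\mu_{k+1}+\nu_{k+1}$ terms, so the theorem survives; but since you yourself identify step 1 as the main obstacle and then assert the too-strong $O(\mathbf{u})$ bound without proof, the proposal has a hole precisely there. A smaller omission: you assert rather than derive $\|\bar P_{k+1}-P_{k+1}\|\le 2\mu_{k+1}+O(\mu_{k+1}^2)$; the Cholesky route gives $R_P=I+\mathbf{SUT}(P_{k+1}^TP_{k+1}-I)+\frac{1}{2}\mathrm{diag}(P_{k+1}^TP_{k+1}-I)+O(\mu_{k+1}^2)$, hence a bound $\mu_{k+1}+O(\mathbf{u}+\mu_{k+1}^2)$, which sits within the stated one, so that omission is harmless.
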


For the LBFRO, the orthogonality levels of $u_i$ and $v_i$ are kept around $\mathcal{O}(\mathbf{u})$, thus by Theorem \ref{thm2.1} we have
\begin{align}
	& \lVert \bar{P}_{k+1}-P_{k+1}\| = \mathcal{O}(\mathbf{u}) , \ \ \ 
	  \lVert \bar{Q}_{k+1}-Q_{k+1} \lVert = \mathcal{O}(\mathbf{u}) , \label{2.15} \\
	& \lVert E \lVert = \mathcal{O}(c(n,k)\lVert A \lVert\mathbf{u}), \ \ \ 
	  \lVert \delta_b \lVert = \mathcal{O}(\lVert b \lVert \mathbf{u}). \label{2.16}
\end{align} 
This result will be used in the next section to estimate upper bound on the proper value of roundoff unit $\mathbf{u}$ corresponding to the used computing precision.

\section{Choice of computing precision for the construction of Krylov subspace}\label{sec3}
In this section, we investigate which lower precision format should be used for computing Lanczos vectors with full reorthogonalization. To this end, by assuming the LBFRO is implemented in finite precision computing with roundoff unit $\mathbf{u}$, we will give an upper bound on $\mathbf{u}$ such that the best computed regularized solution can achieve the same accuracy as the best regularized LSQR solution to \eqref{1.1} obtained in exact arithmetic. In this section, the updating procedure or \eqref{2.6} is assumed to be implemented in exact arithmetic.

\begin{theorem}\label{lem3.1}
	Suppose that the LBFRO in LSQR is implemented in finite precision with roundoff unit $\mathbf{u}$. If \eqref{2.6} is solved exactly, then the computed $x_k$ satisfies 
	\begin{equation}\label{3.1}
		\dfrac{\|x_{k}-\bar{x}_{k}\|}{\|\bar{x}_{k}\|} = \mathcal{O}(\mathbf{u}),
	\end{equation}
	where $\bar{x}_k$ is the exact $k$-th LSQR solution to the perturbed problem
	\begin{equation}\label{3.2}
		\min\limits_{x \in \mathbb{R}^{n}} \lVert (A+E)x-
		(b+\delta_b) \lVert .
	\end{equation}
\end{theorem}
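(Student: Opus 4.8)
The plan is to exploit the backward-error interpretation supplied by Theorem \ref{thm2.1}: the finite-precision LBFRO does not merely approximate the exact bidiagonalization of $(A,b)$, it \emph{is} the exact Lanczos bidiagonalization of the perturbed pair $(A+E,\,b+\delta_b)$, carried out with genuinely orthonormal bases $\bar{P}_{k+1},\bar{Q}_{k+1}$ and---crucially---with the \emph{same} computed bidiagonal matrix $B_k$ and the same scalar $\beta_1$. The whole argument hinges on observing that, because $B_k$ and $\beta_1$ are common to both the computed iteration and the exact iteration on the perturbed data, the two projected least-squares problems coincide, so the computed $x_k$ and the target $\bar{x}_k$ can differ only through their basis matrices $Q_k$ and $\bar{Q}_k$.

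First I would pin down $\bar{x}_k$ explicitly. By definition $\bar{x}_k$ is the exact $k$-th LSQR iterate for \eqref{3.2}, i.e. it minimizes $\|(A+E)x-(b+\delta_b)\|$ over the Krylov subspace $\mathcal{R}(\bar{Q}_k)$, so writing $x=\bar{Q}_k y$ with $y\in\mathbb{R}^{k}$ and substituting the exact relations \eqref{2.11} and \eqref{2.10} gives
\[
\|(A+E)\bar{Q}_k y-(b+\delta_b)\|
=\|\bar{P}_{k+1}(B_k y-\beta_1 e_1^{(k+1)})\|
=\|B_k y-\beta_1 e_1^{(k+1)}\|,
\]
where the last equality uses the orthonormality of $\bar{P}_{k+1}$. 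Hence the minimizing coordinate vector is $\bar{y}_k=B_k^{\dag}(\beta_1 e_1^{(k+1)})$, which is \emph{identical} to the $y_k$ defining the computed iterate in \eqref{2.6}. Therefore $\bar{x}_k=\bar{Q}_k y_k$ while $x_k=Q_k y_k$, with the \emph{same} $y_k$.

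With this in hand the estimate is immediate. Writing $x_k-\bar{x}_k=(Q_k-\bar{Q}_k)y_k$, noting that $Q_k-\bar{Q}_k$ consists of the first $k$ columns of $Q_{k+1}-\bar{Q}_{k+1}$ and so cannot have larger $2$-norm, and invoking \eqref{2.15},
\[
\|x_k-\bar{x}_k\|\le\|Q_k-\bar{Q}_k\|\,\|y_k\|
\le\|Q_{k+1}-\bar{Q}_{k+1}\|\,\|y_k\|
=\mathcal{O}(\mathbf{u})\,\|y_k\|.
\]
Finally, since $\bar{Q}_k$ has orthonormal columns, $\|\bar{x}_k\|=\|\bar{Q}_k y_k\|=\|y_k\|$, and dividing yields $\|x_k-\bar{x}_k\|/\|\bar{x}_k\|=\mathcal{O}(\mathbf{u})$, which is exactly \eqref{3.1}.

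The only genuinely substantive step---and the one I would be most careful about---is the identification $\bar{y}_k=y_k$: one must recognize that Theorem \ref{thm2.1} places all the rounding errors in the bases and in the backward perturbations $E,\delta_b$ while leaving $B_k$ and $\beta_1$ untouched, so that the projected data $(B_k,\beta_1 e_1^{(k+1)})$ is shared by both processes. I would also verify the accompanying consistency that \eqref{2.10} forces $\beta_1=\|b+\delta_b\|$ and $\bar{p}_1=(b+\delta_b)/\beta_1$, confirming that $\beta_1$ really is the starting scalar of the exact LSQR process on the perturbed right-hand side. Everything after that reduces to orthogonal invariance and is routine.
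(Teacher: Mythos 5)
Your proof is correct and takes essentially the same approach as the paper's: both identify the computed coordinate vector $y_k$ with the exact coordinate vector of the perturbed problem via \eqref{2.10}--\eqref{2.11} and the orthonormality of $\bar{P}_{k+1}$, then bound $\|x_k-\bar{x}_k\|=\|(Q_k-\bar{Q}_k)y_k\|$ using \eqref{2.15} together with $\|y_k\|=\|\bar{x}_k\|$. The only difference is direction---you start from $\bar{x}_k$ and reduce to the projected problem, while the paper starts from $y_k$ and lifts to the perturbed problem---which is the same argument run in reverse.
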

\begin{proof}
	Since \eqref{2.6} is solved exactly, by Theorem \ref{thm2.1} we have
	\begin{align*}
		y_{k}
		&=  \mathrm{arg}\min\limits_{y\in \mathbb{R}^{k}} \lVert B_{k}y
		- \beta_{1}e_{1}^{(k+1)} \lVert 
		= \mathrm{arg}\min\limits_{y\in \mathbb{R}^{k}} \lVert \bar{P}_{k+1}B_{k}y - \bar{P}_{k+1}\beta_{1}e_{1}^{(k+1)} \lVert \\
		&= \mathrm{arg}\min\limits_{y\in \mathbb{R}^{k}} \lVert (A+E)\bar{Q}_{k}y-(b+\delta_b) \lVert .
	\end{align*}
	From Theorem \ref{thm2.1} we know that $\bar{Q}_k$ is the right orthonormal matrix generated by the Lanczos biagonalization of $A+E$ with starting vector $b+\delta_b$ in exact arithmetic, which means $\mathcal{R}(\bar{V}_k)=\mathcal{K}_{k}((A+E)^{T}(A+E),(A+E)^{T}(b+\delta_b))$. Let $\bar{x}_{k} = \bar{Q}_{k}y_k$. Then $\bar{x}_{k}$ is the $k$-th LSQR solution to \eqref{3.2}.
	Since $x_k=Q_ky_k$, by \eqref{2.15} we have
	$$\lVert \bar{x}_{k}-x_{k}\lVert \leq \|Q_{k}-\bar{Q}_{k}\|\|y_{k}\|=\mathcal{O}(\|y_{k}\|\mathbf{u})).$$
	Using $\|y_{k}\|=\|\bar{x}_{k}\|$, we obtain \eqref{3.1}. \qed
\end{proof} 

This result indicates that $x_{k}\approx \bar{x}_{k}$ within $\mathcal{O}(\mathbf{u})$. If we rewrite \eqref{3.2} as
\begin{equation}\label{3.3}
	(A+E)x = (b_{ex}+Ex_{ex}) + (e-Ex_{ex}+\delta_b),
\end{equation}
then $x_{ex}$ is the exact solution to $(A+E)x_{ex}=b_{ex}+Ex_{ex}$ and $e-Ex_{ex}+\delta_b$ is the noise term. Notice that $\|E\|/\|A\|=\mathcal{O}(c(n,k)\mathbf{u})$ and thus for a not big $k$ the singular values of $A+E$ decrease monotonically without noticeable gap until they tend to settle at a level of $\mathcal{O}(\|A\|\mathbf{u})$. Therefore, the linear system \eqref{3.3} inherits the ill-posedness of \eqref{1.1}. Moreover, if $\|-Ex_{ex}+\delta_b\|\ll\|e\|$, then $e-Ex_{ex}+\delta_b$ can be treated as a Gaussian noise. For a sufficiently small $\mathbf{u}$, we can hope that the best LSQR regularized solution to \eqref{3.3} has the same accuracy as that to \eqref{1.1}.

Suppose that the best LSQR regularized solution to \eqref{3.3} is $\bar{x}_{k_0}$. Let the $i$-th largest singular values of $A+E$ be $\bar{\sigma}_i$ and the corresponding left singular vector be $\bar{u}_i$. Applying the regularizaton theory and DPC to \eqref{1.1} and \eqref{3.3}, the accuracy of $\bar{x}_{k_0}$ will be the same as that of $x_{opt}$ if: 
\begin{itemize}
	\item[$\bullet $] the DPC of \eqref{3.3} inherits properties of the DPC of \eqref{1.1}, i.e., the DPC of \eqref{3.3} before the noise dominates should satisfies
	\begin{equation}\label{DPC2}
		|\bar{u}_{i}^{T}b| \approx \rho_{0}\bar{\sigma}_{i}^{1+\beta}, \ \ 1\leq i \leq k^{*};
	\end{equation}
	\item[$\bullet $] the effective resolution limit of \eqref{3.3} has the same accuracy as that of \eqref{1.1} \footnote{The assertion implicitly uses the order optimal property of the LSQR or its mathematical equivalent CG algorithm for linear inverse problems \cite[\S 7.3]{Engl2000}, which means that the best LSQR regularized solution can achieve the same accuracy as that of $x_{opt}$. Although plenty of numerical results confirm it without any exception, this property has not been rigorously proved for discrete linear ill-posed problems; see \cite[\S 6.3]{Hansen1998}and \cite{Chung2021} for more discussions.}.
\end{itemize} 
Note that Theorem \ref{lem3.1} implies that $\|x_{k_0}-\bar{x}_{k_0}/\|\bar{x}_{k_0}\|= \mathcal{O}(\mathbf{u})$. Therefore, for the above $\bar{x}_{k_0}$, the corresponding $x_{k_0}$ will have the same accuracy as $x_{opt}$ if
\begin{equation}\label{3.8}
	\mathbf{u}\ll C_1\varepsilon^{\frac{\beta}{1+\beta}},
\end{equation}
which implies that the best computed regularized solution among $x_k$ can achieve the same accuracy as $x_{opt}$ .

In the following we make some analysis about the above assertions.
\begin{lemma}\label{lem3.2}
	For the ill-posed problem \eqref{3.2} or its equivalence \eqref{3.3}, if \eqref{DPC2} holds and the following two conditions are satisfied:
	$\mathbf{(1).}$ $\|-Ex_{ex}+\delta_b\|\ll\|e\|$;
	$\mathbf{(2).}$ $\lVert E \lVert \ll \sigma_{k^{*}}$, then the effective resolution limit of \eqref{3.3} has the same accuracy as that of \eqref{1.1}. 
\end{lemma}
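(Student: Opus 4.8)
The plan is to trace how the effective resolution limit is built out of the data of the problem and then check that each ingredient is preserved in passing from \eqref{1.1} to \eqref{3.3}. Writing $\tilde e := e - Ex_{ex}+\delta_b$ for the effective noise of \eqref{3.3}, the target is to show that its resolution limit $\bar\eta_{res}\approx(m^{-1/2}\|\tilde e\|)^{\frac{\beta}{1+\beta}}$ agrees to leading order with $\eta_{res}\approx(m^{-1/2}\|e\|)^{\frac{\beta}{1+\beta}}$. Since this expression depends only on the decay exponent $\beta$ of the Picard coefficients, the dimension $m$, and the effective noise norm, it suffices to verify that (i) $\beta$ is unchanged, which is exactly the content of the hypothesis \eqref{DPC2}, and (ii) the effective noise norm and the singular quantities controlling the transition point are unchanged. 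The work therefore reduces to step (ii).

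First I would estimate the effective noise norm using condition $\mathbf{(1)}$. By the triangle inequality,
\[
\|\tilde e\| = \|e - Ex_{ex}+\delta_b\| = \|e\|\Big(1+\mathcal{O}\big(\|-Ex_{ex}+\delta_b\|/\|e\|\big)\Big)\approx \|e\|,
\]
and, as argued just after \eqref{3.3}, $\tilde e$ may be treated as Gaussian white noise, so its perturbed Fourier coefficients obey $|\bar u_i^T\tilde e|\approx m^{-1/2}\|\tilde e\|\approx m^{-1/2}\|e\|$. Hence the noise floor of \eqref{3.3} sits at the same level $m^{-1/2}\|e\|$ as that of \eqref{1.1}.

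Next I would invoke condition $\mathbf{(2)}$ to freeze the spectrum in the signal range. Weyl's inequality gives $|\bar\sigma_i-\sigma_i|\le\|E\|$ for every $i$, and for $i\le k^*$ we have $\sigma_i\ge\sigma_{k^*}\gg\|E\|$, whence $\bar\sigma_i\approx\sigma_i$ throughout the signal-dominated range. The transition point $\bar k^*$ of \eqref{3.3}, fixed by matching the clean coefficient \eqref{DPC2} to the noise floor, then solves $\rho_0\bar\sigma_{\bar k^*}^{1+\beta}\approx m^{-1/2}\|e\|$, which is the same equation that determines $k^*$ for \eqref{1.1}; thus $\bar k^*\approx k^*$ and $\bar\sigma_{\bar k^*}\approx\sigma_{k^*}$. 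Mimicking the order-of-magnitude derivation of $\eta_{res}$ recalled in Section~\ref{sec2}, the smallest recoverable coefficient of \eqref{3.3} is
\[
|\bar v_{\bar k^*}^T x_{ex}|\approx\rho_0\,\bar\sigma_{\bar k^*}^{\,\beta}\approx\rho_0^{\frac{1}{1+\beta}}\big(m^{-1/2}\|e\|\big)^{\frac{\beta}{1+\beta}},
\]
which is $\eta_{res}$ up to the moderate constant, giving $\bar\eta_{res}\approx\eta_{res}$ and proving the claim.

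The main obstacle is that the entire chain lives at the order-of-magnitude level inherited from the simplifying DPC model \eqref{3.4}: the formula for $\eta_{res}$ is itself a heuristic, so the genuine difficulty is to push that heuristic through the perturbation while controlling the perturbed singular vectors $\bar u_i,\bar v_i$, not merely the singular values. Weyl's inequality bounds $\bar\sigma_i$ but says nothing about $\bar u_i,\bar v_i$, and a rigorous vector bound (e.g. a Wedin-type $\sin\Theta$ estimate) would demand a spectral gap that an ill-posed $A$ simply does not have. Condition $\mathbf{(2)}$ is precisely what keeps the leading singular subspace — and hence the perturbed Picard coefficients $|\bar u_i^T b|$ appearing in \eqref{DPC2} — only mildly disturbed for $i\le k^*$, so I expect the proof to stay at the same level of rigor as the underlying regularization theory, using $\mathbf{(1)}$ and $\mathbf{(2)}$ to license $\|\tilde e\|\approx\|e\|$ and $\bar\sigma_i\approx\sigma_i$ rather than attempting sharp singular-vector estimates.
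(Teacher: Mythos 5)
Your proposal is correct and follows essentially the same route as the paper's proof: use Condition $\mathbf{(1)}$ to conclude the effective noise $e-Ex_{ex}+\delta_b$ has norm $\approx\|e\|$ and can be treated as Gaussian, use Condition $\mathbf{(2)}$ to conclude $\bar{\sigma}_i\approx\sigma_i$ up to the transition point, and then invoke Hansen's formula for the effective resolution limit together with \eqref{DPC2} to get $\bar{\eta}_{res}\approx(m^{-1/2}\|e\|)^{\frac{\beta}{1+\beta}}\approx\eta_{res}$. The extra details you supply (Weyl's inequality for the singular values, re-deriving the transition-point equation) are harmless elaborations of the same order-of-magnitude argument the paper cites directly from \cite[\S 4.5]{Hansen1998}.
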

\begin{proof}
	Since the noise in problem \eqref{3.3} is $e-Ex_{ex}+\delta_b$, by Condition $\mathbf{(1)}$, the noise $e$ dominates, thus we can regard this noise as Gaussian. Condition $\mathbf{(2)}$ implies that  $\bar{\sigma}_{i}\approx \sigma_{i}$ until the noise in $|u_{i}^{T}b|$ starts to dominates, and thus the errors in $A+E$ starts to dominates at a point $\bar{k}^{*}>k^{*}$. Therefore, it follows from \cite[\S 4.5]{Hansen1998}, if \eqref{DPC2} holds, that
	\[\bar{\eta}_{res} \approx (m^{-1/2}\lVert e-Ex_{ex}+\delta_b\lVert)^{\frac{\beta}{1+\beta}} 
	\approx (m^{-1/2}\lVert e\lVert)^{\frac{\beta}{1+\beta}} ,\]
	where $\bar{\eta}_{res}$ is the effective resolution limit of \eqref{3.3}. \qed
\end{proof}

\begin{lemma}\label{lem3.3}
	For the iteration number $k$ not very big, if $\mathbf{u}$ satisfies 
	\begin{equation}\label{bnd1}
		\mathbf{u} \ll
		\min\{\varepsilon, (m^{-1/2}\varepsilon)^{\frac{1}{1+\beta}}\},
	\end{equation}
	then the relation \eqref{3.8} and Conditions $\mathbf{(1)}$ and $\mathbf{(2)}$ hold.
\end{lemma}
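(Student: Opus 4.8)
The plan is to verify the three requirements separately, in each case reducing the desired ``$\ll$'' relation to one of the two terms in the bound \eqref{bnd1} after substituting the perturbation estimates \eqref{2.16}, namely $\|E\|=\mathcal{O}(c(n,k)\|A\|\mathbf{u})$ and $\|\delta_b\|=\mathcal{O}(\|b\|\mathbf{u})$. Throughout I would freely absorb the moderately growing constant $c(n,k)$ (legitimate since $k$ is not very big) and any $\sqrt{n}$ factors into the implied constants.

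First I would dispose of \eqref{3.8}. Since $0<\varepsilon<1$ and $0<\tfrac{\beta}{1+\beta}<1$, raising $\varepsilon$ to a power below $1$ increases it, so $\varepsilon<\varepsilon^{\beta/(1+\beta)}$. Hence the first term of \eqref{bnd1}, $\mathbf{u}\ll\varepsilon$, immediately yields $\mathbf{u}\ll\varepsilon^{\beta/(1+\beta)}$, and absorbing the moderate constant $C_1$ gives \eqref{3.8}.

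Next I would establish Condition $\mathbf{(1)}$ via the triangle bound $\|-Ex_{ex}+\delta_b\|\le\|E\|\,\|x_{ex}\|+\|\delta_b\|$. The key auxiliary estimate is $\|A\|\,\|x_{ex}\|\approx\|b_{ex}\|$: writing $x_{ex}=A^{\dag}b_{ex}=\sum_i(u_i^Tb_{ex}/\sigma_i)v_i$ and inserting the DPC \eqref{DPC} gives $\|x_{ex}\|=\rho_0(\sum_i\sigma_i^{2\beta})^{1/2}=\Theta(\rho_0\sigma_1^{\beta})$ and $\|b_{ex}\|=\Theta(\rho_0\sigma_1^{1+\beta})$, while $\|A\|=\sigma_1$, so their ratio is $\Theta(1)$. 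Since $\|e\|<\|b_{ex}\|$ we also have $\|b\|\approx\|b_{ex}\|$. Substituting \eqref{2.16} then makes both terms $\mathcal{O}(c(n,k)\mathbf{u}\|b_{ex}\|)$, and because $\|e\|=\varepsilon\|b_{ex}\|$, the requirement reduces to $\mathbf{u}\ll\varepsilon$, again the first term of \eqref{bnd1}.

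Finally, Condition $\mathbf{(2)}$, $\|E\|\ll\sigma_{k^*}$, is the one requiring the most care, and I expect it to be the main obstacle. Here I would extract the ratio $\sigma_{k^*}/\sigma_1$ from the transition characterization \eqref{trans}: at the transition point the DPC gives $\rho_0\sigma_{k^*}^{1+\beta}\approx m^{-1/2}\|e\|$, which combined with $\|b_{ex}\|\approx\rho_0\sigma_1^{1+\beta}$ yields $(\sigma_{k^*}/\sigma_1)^{1+\beta}\approx m^{-1/2}\varepsilon$, i.e.\ $\sigma_{k^*}/\sigma_1\approx(m^{-1/2}\varepsilon)^{1/(1+\beta)}$. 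Since $\|E\|=\mathcal{O}(c(n,k)\sigma_1\mathbf{u})$, the condition $\|E\|\ll\sigma_{k^*}$ becomes $\mathbf{u}\ll(m^{-1/2}\varepsilon)^{1/(1+\beta)}$, which is exactly the second term of \eqref{bnd1}. The delicate points are the order-of-magnitude step $\sigma_{k^*}\approx\sigma_{k^*+1}$ needed to read the transition level off \eqref{trans}, and the consistent treatment of the $\approx$ and $\ll$ relations together with the moderate constants; keeping these controlled is precisely what the ``$k$ not very big'' hypothesis buys us.
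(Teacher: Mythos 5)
Your proposal is correct and follows essentially the same route as the paper: split the claim into \eqref{3.8}, Condition $\mathbf{(1)}$, and Condition $\mathbf{(2)}$, reduce the first two to $\mathbf{u}\ll\varepsilon$ and the third to $\mathbf{u}\ll(m^{-1/2}\varepsilon)^{\frac{1}{1+\beta}}$ via the perturbation bounds \eqref{2.16}, the DPC model \eqref{3.4}, and the transition characterization \eqref{trans}. The only cosmetic differences are that you justify $\|A\|\,\|x_{ex}\|\approx\|b_{ex}\|$ from the DPC where the paper simply assumes $\|x_{ex}\|$ is a moderate quantity, and you use $|u_1^Tb_{ex}|\approx\|b_{ex}\|$ where the paper uses the one-sided inequality $|u_1^Tb_{ex}|\leq\|b_{ex}\|$ to bound $\sigma_{k^*+1}$ from below; both are consistent with the paper's level of rigor.
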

\begin{proof}
	Condition $\mathbf{(1)}$ holds if $\|Ex_{ex}\|\ll\|e\|$ and $\|\delta_b\|\ll\|e\|$. By \eqref{2.16}, these two equalities can be satisfied if $\mathbf{u}\ll\|e\|/\|b\|$ and $\|E\| \ll \|e\|/\|x_{ex}\|$. Since 
	\[\dfrac{\|e\|}{\|b\|}\geq
	\dfrac{\|e\|}{\|b_{ex}\|+\|e\|}=\dfrac{\varepsilon}{1+\varepsilon}>\dfrac{\varepsilon}{2},\]
	we have $\mathbf{u}\ll\|e\|/\|b\|$ if $\mathbf{u} \ll\varepsilon$. Note that
	\[ \dfrac{\|e\|}{\|x_{ex}\|}\leq \dfrac{\|A\|\|e\|}{\|b_{ex}\|}=\varepsilon\|A\|, \]
	and the value of $\|e\|/\|x_{ex}\|$ should not deviate too far from $\varepsilon\|A\|$ since $\|x_{ex}\|$ is usually a moderate quantity. Using $\lVert E \lVert =\mathcal{O}(c(n,k)\|A\|\mathbf{u})$, we have $\|E\| \ll \|e\|/\|x_{ex}\|$ if $\mathbf{u} \ll\varepsilon$ since $c(n,k)$ is a moderate quantity.
 
 	By \eqref{3.4} and \eqref{trans}, we have $\rho_{0}\sigma_{k^{*}+1}^{1+\beta}\approx|u_{k^{*}+1}^{T}e|\approx m^{-1/2}\|e\|$, which implies that Condition $\mathbf{(2)}$ will hold if
	\begin{equation}\label{sig}
		\|E\| \ll \sigma_{k^{*}+1} \approx (m^{-1/2}\rho_{0}^{-1}\|e\|)^{\frac{1}{1+\beta}}.
	\end{equation}
	By \eqref{3.4}, we have $\rho_{0}=|u_{1}^{T}b_{ex}|/\sigma_{1}^{1+\beta}$, and thus
	\begin{equation}\label{ineq1}
		(\rho_{0}^{-1}\|e\|)^{\frac{1}{1+\beta}}= \sigma_{1}\Big(\dfrac{\|e\|}{|u_{1}^{T}b_{ex}|}\Big)^{\frac{1}{1+\beta}}\geq \|A\|\Big(\dfrac{\|e\|}{\|b_{ex}\|}\Big)^{\frac{1}{1+\beta}}
		= \varepsilon^{\frac{1}{1+\beta}}\|A\|.
	\end{equation}
	Therefore Condition $\mathbf{(2)}$ will hold if $\|E\| \ll(m^{-1/2}\varepsilon)^{\frac{1}{1+\beta}}\|A\|$, which can be satisfied if $\mathbf{u}\ll(m^{-1/2}\varepsilon)^{\frac{1}{1+\beta}}$. By the above derivations, one can check that \eqref{3.8} and Conditions $\mathbf{(1)}$ and $\mathbf{(2)}$ hold if $\mathbf{u}$ satisfies \eqref{bnd1}. \qed
\end{proof}

In order to analyze \eqref{DPC2}, we adopt the following popular model describing the decay rates of $\sigma_{i}$ for different types of ill-posedness \cite{Hansen1998}:
\begin{equation}\label{decay}
	\sigma_{i}=
	\left\{
	\begin{array}{l}
		\zeta\rho^{-i} , \  \rho>1 \ \ \ \mathrm{severely \ ill\mbox{-}posed}; \\
		\zeta i^{-\alpha} , \ \alpha>1  \ \ \ \mathrm{moderately \ ill\mbox{-}posed};  \\
		\zeta i^{-\alpha} , \ 1/2 < \alpha\leq 1  \ \ \ \mathrm{mildly \ ill\mbox{-}posed}.
	\end{array}
	\right. 
\end{equation}

\begin{remark}\label{remark3.1}
	Notice that the model \eqref{decay} means that all the singular values of $A$ are simple. For the case that $A$ has multiple singular values, the model should be rewritten by the following modification; see \cite{Jia2020c} for using this modified model to analyze regularization effect of LSQR for the multiple singular values case. 
	First rewrite the SVD of $A$ as
	\[A=\widehat{U}
	\begin{pmatrix}
		\Sigma \\
		\mathbf{0}
	\end{pmatrix} 
	\widehat{V}^{T} ,\]
	where $\widehat{U}=(\widehat{U}_{1},\dots,\widehat{U}_{r},\widehat{U}_{\perp})$ with $\widehat{U}_{i}\in \mathbb{R}^{m\times l_{i}}$ and $\widehat{V}=(\widehat{V}_{1},\dots,\widehat{V}_{r})$ with $\widehat{V}_{i}\in \mathbb{R}^{n\times l_{i}}$ are column orthonormal, $\Sigma=\mbox{diag}(\hat{\sigma}_{1}I_{l_{1}},\dots,\hat{\sigma}_{r}I_{l_{r}})$ with the $r$ distinct singular values $\hat{\sigma}_{1}>\hat{\sigma}_{2}>\dots>\hat{\sigma}_{r}>0$, each $\hat{\sigma}_{i}$ is $l_{i}$ multiple and $l_{1}+l_{2}+\cdots +l_{r}=n$. Then the decay rate of $\hat{\sigma}_{i}$ can be written in the same form as \eqref{decay}. In this case, the DPC of \eqref{1.1} becomes
	\begin{equation}
		\|\widehat{U}_{i}^{T}b_{ex}\| = \rho_{0}\hat{\sigma}_{i}^{1+\beta}, \ \ i=1,2,\dots,r ,
	\end{equation}
	which states that, on average the (generalized) Fourier coefficients $\| \widehat{U}_{i}^{T}b_{ex}\|$ decay faster than $\hat{\sigma}_{i}$.
\end{remark}

Using the above model, we can give a sufficient condition under which the relation \eqref{DPC2} holds. For notational simplicity, we also write $\hat{\sigma}_i$ as $\sigma_i$ without causing confusions.
\begin{lemma}\label{lem3.4}
	Suppose that the iteration number $k$ is not very big and $\sigma_{i}-\sigma_{i+1} \gg \lVert E \lVert$ for $1\leq i\leq k^{*}$. If \eqref{bnd1} holds and
	\begin{equation}\label{3.16}
		\mathbf{u} \ll (m^{-1/2}\varepsilon)^{\frac{2+\beta}{1+\beta}}\left(\frac{\sigma_{k^{*}}}{\sigma_{k^{*}+1}}-1 \right),
	\end{equation}
	then the relation \eqref{DPC2} holds.
\end{lemma}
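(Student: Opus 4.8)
The plan is to derive the perturbed Picard condition \eqref{DPC2} from the original one \eqref{DPC} by separately controlling how the two ingredients of \eqref{DPC2}, namely $\bar{\sigma}_i^{1+\beta}$ and $\bar{u}_i^{T}b$, deviate from $\sigma_i^{1+\beta}$ and $u_i^{T}b$ as $A$ is replaced by $A+E$. Since \eqref{bnd1} and Lemma \ref{lem3.3} already supply $\|E\|\ll\sigma_{k^*}$, the new ingredient needed here is a perturbation estimate for the left singular vectors, and the whole lemma amounts to asking that the resulting error be negligible against $\rho_0\sigma_i^{1+\beta}$ for all $1\le i\le k^*$.

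First I would handle the singular values. Weyl's bound gives $|\bar{\sigma}_i-\sigma_i|\le\|E\|$, so for $i\le k^*$ the relative change $|\bar{\sigma}_i-\sigma_i|/\sigma_i\le\|E\|/\sigma_{k^*}$ is negligible and $\bar{\sigma}_i^{1+\beta}=\sigma_i^{1+\beta}\bigl(1+\mathcal{O}((1+\beta)\|E\|/\sigma_{k^*})\bigr)\approx\sigma_i^{1+\beta}$. Hence the target right-hand side $\rho_0\bar{\sigma}_i^{1+\beta}$ is, to leading order, just $\rho_0\sigma_i^{1+\beta}$, and it suffices to show $|\bar{u}_i^{T}b|\approx\rho_0\sigma_i^{1+\beta}$.

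The crucial step is the left-hand side. Writing $\bar{u}_i^{T}b=u_i^{T}b+(\bar{u}_i-u_i)^{T}b$ yields $\bigl||\bar{u}_i^{T}b|-|u_i^{T}b|\bigr|\le\|\bar{u}_i-u_i\|\,\|b\|$, and the original DPC gives $|u_i^{T}b|\approx\rho_0\sigma_i^{1+\beta}$ for $i\le k^*$. To bound $\|\bar{u}_i-u_i\|$ I would invoke a $\sin\Theta$/Wedin-type theorem for a simple singular vector under the separation hypothesis $\sigma_i-\sigma_{i+1}\gg\|E\|$; the relevant gap for $u_i$ is $\min(\sigma_{i-1}-\sigma_i,\sigma_i-\sigma_{i+1})$, both terms of which are covered by the stated hypothesis for $2\le i\le k^*$ (and only $\sigma_1-\sigma_2$ is needed for $i=1$). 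Because the decay model \eqref{decay} makes the gaps shrink with $i$, the smallest gap over $1\le i\le k^*$ is $\sigma_{k^*}-\sigma_{k^*+1}$, so $\|\bar{u}_i-u_i\|=\mathcal{O}(\|E\|/(\sigma_{k^*}-\sigma_{k^*+1}))$ and the binding case of \eqref{DPC2} is $i=k^*$, where the requirement becomes $\|E\|\,\|b\|/(\sigma_{k^*}-\sigma_{k^*+1})\ll\rho_0\sigma_{k^*}^{1+\beta}$.

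Finally I would convert this into the stated bound \eqref{3.16}. Substituting $\|E\|=\mathcal{O}(c(n,k)\|A\|\mathbf{u})$ with $c(n,k)$ moderate, $\|b\|\approx\|b_{ex}\|\approx|u_1^{T}b_{ex}|=\rho_0\sigma_1^{1+\beta}$, the transition estimate $\sigma_{k^*+1}\approx(m^{-1/2}\rho_0^{-1}\|e\|)^{1/(1+\beta)}$ from \eqref{sig}, and $\rho_0^{-1}\|e\|\approx\sigma_1^{1+\beta}\varepsilon$, then writing $\sigma_{k^*}-\sigma_{k^*+1}=\sigma_{k^*+1}(\sigma_{k^*}/\sigma_{k^*+1}-1)$ and using $\sigma_{k^*}\approx\sigma_{k^*+1}$, a short computation collapses the requirement to $\mathbf{u}\ll(m^{-1/2}\varepsilon)^{(2+\beta)/(1+\beta)}(\sigma_{k^*}/\sigma_{k^*+1}-1)$, which is exactly \eqref{3.16}. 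The main obstacle I anticipate is the singular-vector perturbation step: one must justify the gap-dependent bound on $\|\bar{u}_i-u_i\|$, fixing the sign ambiguity of the singular vectors and confirming that the governing gap is indeed the one at $i=k^*$ under \eqref{decay}; once this is in place, the reduction to \eqref{3.16} is essentially bookkeeping with the DPC substitutions.
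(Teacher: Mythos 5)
Your proposal follows essentially the same route as the paper's proof: bound the left singular vector perturbation by a gap-dependent $\sin\theta$ estimate under the hypothesis $\sigma_i-\sigma_{i+1}\gg\lVert E\rVert$, reduce \eqref{DPC2} to the sufficient condition $\lVert E\rVert\,\lVert b\rVert/(\sigma_i-\sigma_{i+1})\ll\rho_{0}\sigma_i^{1+\beta}$ with binding index $i=k^{*}$, and then collapse this to \eqref{3.16} via \eqref{sig}, \eqref{ineq1} and $\lVert E\rVert=\mathcal{O}(c(n,k)\lVert A\rVert\mathbf{u})$ --- your bookkeeping (writing $\lVert b\rVert\approx\rho_0\sigma_1^{1+\beta}$ and $\sigma_{k^*}\approx\sigma_{k^*+1}$, versus the paper's chain of inequalities) yields the identical bound. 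The only substantive omission is that the lemma, in view of Remark \ref{remark3.1} and the convention of writing $\hat{\sigma}_i$ as $\sigma_i$, is also meant to cover matrices with multiple singular values, which the paper handles as a second case by replacing the single-vector bound with Wedin's $\sin\Theta$ theorem for the singular subspaces and estimating $\bigl|\lVert\bar{U}_{i}^{T}b\rVert-\lVert\widehat{U}_{i}^{T}b\rVert\bigr|\leq\lVert b\rVert\,\lVert\sin\Theta(\widehat{U}_{i},\bar{U}_{i})\rVert$; your argument, phrased for simple singular vectors only, does not cover that case.
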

\begin{proof}
	There two cases needed to be proved.

	\textbf{Case 1.} $A$ has single singular values. Write the $i$-th left singular vector of $A+E$ as $\bar{u}_{i}=u_{i}+\delta_{u_{i}}$ where $\delta_{u_{i}}$ is an error vector. Since \eqref{bnd1} holds, we have $\bar{\sigma}_{i}\approx\sigma_{i}$ for $1\leq i \leq k^{*}$. Note \eqref{DPC2} implies 
	$|(u_{i}+\delta_{u_{i}})^{T}b| \approx \rho_{0}\bar{\sigma}_{i}^{1+\beta}  \approx  \rho_{0}\sigma_{i}^{1+\beta}$ for $1\leq i \leq k^{*}$,
	which can be satisfied if
	\begin{equation}\label{3.17}
		|\delta_{u_{i}}^{T}b| \ll \rho_{0}\sigma_{i}^{1+\beta} , \ \ 1\leq i\leq k^{*}.
	\end{equation}
	By the perturbation theorem of singular vectors \cite[Theorem 1.2.8]{Bjorck1996}, we have the perturbation bound 
	\[|\sin \theta(u_{i},\bar{u}_{i})|\leq \frac{\lVert E\lVert}{\sigma_{i}-\sigma_{i+1}-\lVert E \lVert}
	\approx \frac{\lVert E\lVert}{\sigma_{i}-\sigma_{i+1}}, \ \ 1\leq i \leq k^{*}\]
	under the assumption that $\sigma_{i}-\sigma_{i+1} \gg \lVert E \lVert$ for $1\leq i\leq k^{*}$, where $\theta(u_{i},\bar{u}_{i})$ is the angle between $u_{i}$ and $\bar{u}_{i}$. Thus we have
	\[\lVert \delta_{u_{i}}\lVert
	= 2|\sin (\theta(u_{i},\bar{u}_{i})/2)|\approx |\sin \theta(u_{i},\bar{u}_{i})| 
	\lesssim \frac{\lVert E\lVert}{\sigma_{i}-\sigma_{i+1}} .\]
	Therefore, \eqref{3.17} can be satisfied if
	\[\frac{\lVert E \lVert \lVert b \lVert}{\sigma_{i}-\sigma_{i+1}} \ll \rho_{0}\sigma_{i}^{1+\beta}, \ \ 1\leq i\leq k^{*},\]
	which is equivalent to
	\begin{equation}\label{3.18}
		\lVert E\lVert \ll \frac{\rho_{0}\sigma_{i}^{2+\beta}\big(1-\frac{\sigma_{i+1}}{\sigma_{i}}\big)}{\lVert b\lVert}, \ \ 1\leq i\leq k^{*}.
	\end{equation}
	Using the expression of $\sigma_{k^{*}+1}$ in \eqref{sig} and the model \eqref{decay}, the minimum of the right-hand term of the above inequality is achieved at $i=k^{*}$, which is
	\begin{align*}
		\frac{\rho_{0}\sigma_{k^{*}}^{2+\beta}\big(1-\frac{\sigma_{k^{*}+1}}{\sigma_{k^{*}}}\big)}{\lVert b\lVert} 
		&= \frac{\rho_{0}\sigma_{k^{*}+1}^{2+\beta}\big(\frac{\sigma_{k^{*}}}{\sigma_{k^{*}+1}}\big)^{2+\beta}\big(1-\frac{\sigma_{k^{*}+1}}{\sigma_{k^{*}}}\big)}{\lVert b\lVert}  \\
		&\approx \frac{\rho_{0}(m^{-1/2}\rho_{0}^{-1}\|e\|)^{\frac{2+\beta}{1+\beta}}\big(\frac{\sigma_{k^{*}}}{\sigma_{k^{*}+1}}\big)^{1+\beta}\big(\frac{\sigma_{k^{*}}}{\sigma_{k^{*}+1}}-1\big)}{\lVert b_{ex}\lVert} \\
		&\geq 
		\frac{(m^{-1/2}\|e\|)^{\frac{2+\beta}{1+\beta}} \big(\frac{\sigma_{k^{*}}}{\sigma_{k^{*}+1}}-1\big)}{\lVert b_{ex}\lVert\rho_{0}^{\frac{1}{1+\beta}}}.
	\end{align*}
	By \eqref{ineq1}, we have
	\begin{align*}
		\frac{(m^{-1/2}\|e\|)^{\frac{2+\beta}{1+\beta}}}{\lVert b_{ex}\lVert\rho_{0}^{\frac{1}{1+\beta}}}
		&= (m^{-1/2})^{\frac{2+\beta}{1+\beta}}(\rho_{0}^{-1}\|e\|)^{\frac{1}{1+\beta}}\frac{\|e\|}{\lVert b_{ex}\lVert} \\
		&\geq (m^{-1/2})^{\frac{2+\beta}{1+\beta}}\varepsilon^{\frac{1}{1+\beta}}\|A\|\varepsilon 
		= (m^{-1/2}\varepsilon)^{\frac{2+\beta}{1+\beta}}\|A\|.
	\end{align*}
	Therefore, by \eqref{3.17} and \eqref{3.18} and using $\lVert E \lVert = \mathcal{O}(c(n,k)\|A\|\mathbf{u})$,  we finally obtain the result.

	\textbf{Case 2.} $A$ has multiple singular values. Write the SVD of $A+E$ in a similar form as that of $A$, such that the left singular vectors can be written as $\bar{U}=(\bar{U}_{1},\dots,\bar{U}_{r},\bar{U}_{\perp})$. By the perturbation theorem of invariant singular subspaces \cite{Wedin1972}, we have
	$$\| \sin \Theta(\widehat{U}_{i}, \bar{U}_{i})\| 
	\leq \frac{\lVert E\lVert}{\hat{\sigma}_{i}-\hat{\sigma}_{i+1}-\lVert E \lVert}
	\approx \frac{\lVert E\lVert}{\hat{\sigma}_{i}-\hat{\sigma}_{i+1}}$$
	where $\| \sin \Theta(\widehat{U}_{i}, \bar{U}_{i})\|=\lVert \widehat{U}_{i}\widehat{U}_{i}^{T}-\bar{U}_{i}\bar{U}_{i}^{T}\lVert$ is the angle measure between subspaces spanned by $\widehat{U}_{i}$ and $\bar{U}_{i}$ \cite[\S 2.5]{Golub2013}. Notice that 
	\begin{align*}
		|\lVert \bar{U}_{i}^{T}b\lVert-\lVert\widehat{U}_{i}^{T}b\lVert| 
		&= |\lVert\bar{U}_{i}\bar{U}_{i}^{T}b\lVert-
		\lVert\widehat{U}_{i}\widehat{U}_{i}^{T}b\lVert| 
		\leq \lVert(\bar{U}_{i}\bar{U}_{i}^{T}-\widehat{U}_{i}\widehat{U}_{i}^{T})b\lVert \\
		&\leq \lVert b\lVert \| \sin \Theta(\widehat{U}_{i}, \bar{U}_{i})\|,
	\end{align*}
	where $|\lVert \bar{U}_{i}^{T}b\lVert-\lVert\widehat{U}_{i}^{T}b\lVert|$ is the corresponding version of the left-hand term of \eqref{3.17}. Using the same approach as that for analyzing \eqref{3.17}, we can obtain the result for the multiple singular values case. \qed
\end{proof}

Using model \eqref{decay}, the minimum of $\sigma_{i}-\sigma_{i+1}$ for $1\leq i\leq k$ is achieved at $i=k^*$. Thus
for $1\leq i \leq k^{*}$, we have
\[\sigma_{i}-\sigma_{i+1}=\sigma_{i+1}\big(\frac{\sigma_{i}}{\sigma_{i+1}}-1\big)\geq
	\left\{
	\begin{array}{l}
		\sigma_{k^{*}+1}(\rho -1)  \ \ \ \mathrm{severely \ ill\mbox{-}posed}; \\
		\sigma_{k^{*}+1}[(\frac{k^{*}+1}{k^{*}})^{\alpha}-1] \ \ \ \mathrm{moderately/mildly \ ill\mbox{-}posed}.
	\end{array}
	\right. \]
By \eqref{sig} and \eqref{ineq1}, we have $\sigma_{k^{*}+1}\gtrsim (m^{-1/2}\varepsilon)^{\frac{1}{1+\beta}}\|A\|$.
Using these two inequalities, one can check that if \eqref{bnd1} and \eqref{3.16} hold, then the assumption that $\sigma_{i}-\sigma_{i+1} \gg \lVert E \lVert$ for $1\leq i\leq k^{*}$ can be satisfied. 

Note that the semi-convergence point $k_0$ of LSQR for \eqref{3.3} is usually not big and thus $\lVert E \lVert = \mathcal{O}(c(n,k_0)\|A\|\mathbf{u})$ with $c(n,k_0)$ a moderate quantity. By Lemma \ref{lem3.3} and Lemma \ref{lem3.4}, if $\mathbf{u}$ satisfies \eqref{bnd1} and \eqref{3.16}, then relations \eqref{DPC2} and \eqref{3.8} holds and the effective resolution limit of \eqref{3.3} has the same accuracy as that of \eqref{1.1}. Therefore by Theorem \ref{lem3.1} $\bar{x}_{k_0}$ as well as $x_{k_0}$ will have the same accuracy as $x_{opt}$, which implies that the best computed regularized solution among $x_k$ can achive the same accuracy as $x_{opt}$. The result is summarized in the following theorem.

\begin{theorem}\label{thm3.1}
	Suppose that the LBFRO in LSQR is implemented in finite precision with roundoff unit $\mathbf{u}$ and \eqref{2.6} is solved exactly. If $\mathbf{u}$ satisfies
	\begin{equation}\label{bnd_u}
		\mathbf{u} \ll \varrho(m^{-1/2}\varepsilon)^{\frac{2+\beta}{1+\beta}}
	\end{equation}
	where
	\begin{equation}
		\varrho = \left\{
		\begin{array}{l}
			\min\{1, \rho -1\} \ \ \ \mathrm{severely \ ill\mbox{-}posed}; \\
			\min \{1, (\frac{k^{*}+1}{k^{*}})^{\alpha}-1 \} \ \ \ \mathrm{moderately/mildly \ ill\mbox{-}posed} ,
		\end{array}
		\right. 
	\end{equation}
	then the best computed regularized solution among $x_k$ can achieve the same accuracy as the best regularized LSQR solution to \eqref{1.1} obtained in exact arithmetic.
\end{theorem}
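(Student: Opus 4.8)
The plan is to recognize that this final theorem is the capstone that bundles the two technical results Lemma \ref{lem3.3} and Lemma \ref{lem3.4} together, repackaging their two separate smallness conditions \eqref{bnd1} and \eqref{3.16} into the single cleaner hypothesis \eqref{bnd_u}. Accordingly, the genuine content of the proof is a purely algebraic reduction: I must verify that \eqref{bnd_u} implies \emph{both} \eqref{bnd1} and \eqref{3.16}, after which the chain of lemmas already assembled in the paragraph preceding the statement delivers the conclusion almost verbatim.

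First I would unwind $\varrho$ using the decay model \eqref{decay}. In the severely ill-posed case $\sigma_i = \zeta\rho^{-i}$ gives $\sigma_{k^*}/\sigma_{k^*+1} = \rho$, so $\frac{\sigma_{k^*}}{\sigma_{k^*+1}} - 1 = \rho - 1$; in the moderately/mildly ill-posed case $\sigma_i = \zeta i^{-\alpha}$ gives $\frac{\sigma_{k^*}}{\sigma_{k^*+1}} - 1 = (\frac{k^*+1}{k^*})^{\alpha} - 1$. In either case $\varrho \le \frac{\sigma_{k^*}}{\sigma_{k^*+1}} - 1$ and $\varrho \le 1$ by definition, so the right-hand side of \eqref{bnd_u} is at most the right-hand side of \eqref{3.16}, yielding \eqref{3.16} at once. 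Next I would deduce \eqref{bnd1}: since $0 < m^{-1/2}\varepsilon < 1$ and the exponent satisfies $\frac{2+\beta}{1+\beta} = 1 + \frac{1}{1+\beta} > 1 > \frac{1}{1+\beta}$, I get the monotonicity chains $(m^{-1/2}\varepsilon)^{\frac{2+\beta}{1+\beta}} \le m^{-1/2}\varepsilon \le \varepsilon$ and $(m^{-1/2}\varepsilon)^{\frac{2+\beta}{1+\beta}} \le (m^{-1/2}\varepsilon)^{\frac{1}{1+\beta}}$. Combined with $\varrho \le 1$, both terms inside $\min\{\varepsilon, (m^{-1/2}\varepsilon)^{\frac{1}{1+\beta}}\}$ dominate $\varrho(m^{-1/2}\varepsilon)^{\frac{2+\beta}{1+\beta}}$, so \eqref{bnd_u} forces $\mathbf{u} \ll \varepsilon$ and $\mathbf{u} \ll (m^{-1/2}\varepsilon)^{\frac{1}{1+\beta}}$, which is exactly \eqref{bnd1}.

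With \eqref{bnd1} and \eqref{3.16} both secured, I would assemble the conclusion along the logical path already laid out before the statement. Condition \eqref{bnd1} triggers Lemma \ref{lem3.3}, giving \eqref{3.8} together with Conditions $\mathbf{(1)}$ and $\mathbf{(2)}$; combining \eqref{sig} and \eqref{ineq1} (which give $\sigma_{k^*+1} \gtrsim (m^{-1/2}\varepsilon)^{\frac{1}{1+\beta}}\|A\|$) with the per-case lower bounds for $\sigma_i - \sigma_{i+1}$ confirms the gap hypothesis $\sigma_i - \sigma_{i+1} \gg \|E\|$ required by Lemma \ref{lem3.4}, where $\|E\| = \mathcal{O}(c(n,k_0)\|A\|\mathbf{u})$ with $c(n,k_0)$ moderate because the semi-convergence index $k_0$ is small. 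Lemma \ref{lem3.4} then yields \eqref{DPC2}, Lemma \ref{lem3.2} gives $\bar{\eta}_{res} \approx \eta_{res}$, and finally Theorem \ref{lem3.1} together with \eqref{3.8} and the order-optimality of LSQR shows that $x_{k_0}$ attains the same accuracy as $x_{opt}$.

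The main obstacle here is subtle rather than computational: the whole reduction hinges on correctly identifying which of the two competing exponents $\frac{2+\beta}{1+\beta}$ and $\frac{1}{1+\beta}$ of $m^{-1/2}\varepsilon$ controls each required inequality, and on checking that the moderate constant $c(n,k_0)$ hidden inside $\|E\|$ does not spoil the strict smallness relations. Both points go through precisely because $k_0$ is small and $m^{-1/2}\varepsilon < 1$, so the heavier power $(m^{-1/2}\varepsilon)^{\frac{2+\beta}{1+\beta}}$ is the binding one and dominates every condition demanded by the earlier lemmas.
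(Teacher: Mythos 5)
Your proposal is correct and follows essentially the same route as the paper: the paper likewise obtains Theorem \ref{thm3.1} by observing that \eqref{bnd_u} subsumes \eqref{bnd1} and \eqref{3.16} (since $\varrho\leq\min\{1,\sigma_{k^*}/\sigma_{k^*+1}-1\}$ and the exponent $\tfrac{2+\beta}{1+\beta}$ dominates), verifying the gap condition $\sigma_i-\sigma_{i+1}\gg\|E\|$ via the same per-case bounds and $\sigma_{k^*+1}\gtrsim(m^{-1/2}\varepsilon)^{\frac{1}{1+\beta}}\|A\|$, and then chaining Lemmas \ref{lem3.2}--\ref{lem3.4} with Theorem \ref{lem3.1} to conclude that $x_{k_0}$ attains the accuracy of $x_{opt}$. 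Your write-up simply makes explicit the algebraic reduction that the paper leaves as a summarizing remark.
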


In Theorem \ref{thm3.1}, the parameters $\alpha$, $\beta$ and $\rho$ are unknown in practical computations. In fact, these parameters are ideal for simplifying singular value decaying and DPC models, and they are closely related to properties of a given ill-posed problem. However, it is instructive from them to get insight into a practical choice of $\mathbf{u}$. For severely ill-posed problems, $\rho-1$ is usually a constant not very small, while for moderately/mildly ill-posed problems, if $\varepsilon$ is very small and $\alpha$ is not big that means the singular values of $A$ decaying very slowly, then $k^{*}$ will be big and thus $(\frac{k^{*}+1}{k^{*}})^{\alpha}-1\approx \alpha/k^{*}$ will be very small. Therefore, for moderately/mildly ill-posed problems, if $\varepsilon$ is very small and $\alpha$ is not big, then \eqref{bnd_u} may give a too small upper bound on $\mathbf{u}$. 

Theorem \ref{thm3.1} implies that for noisy level $\varepsilon$ not very small, we can exploit lower precision for constructing Lanczos vectors in the LSQR for solving \eqref{1.1} without loss of any accuracy of final regularized solutions. We will use numerical examples to show that single precision is enough for the three types of linear ill-posed problem. We need to stress a special practical case that $k^*=n$ for a too small $\varepsilon$ and $\alpha$, which may be encountered in some image deblurring problems. In this case the noise amplification is tolerable even without regularization, which makes $x_{nai}$ a good approximation to $x_{ex}$, and the LSQR solves \eqref{1.1} in their standard manners as if they solved an ordinary other than ill-posed problem. Thus our result can not be applied to this case.

\section{Updating $x_k$ using lower precision}\label{sec4}
In this section, we discuss how to use lower precision for updating $x_k$ step by step. Suppose that the $k$-step LBFRO is implemented using the computing precision chosen as in Theorem \ref{thm3.1}. We first review the procedure for updating $x_k$ from $x_0=\mathbf{0}$ proposed in \cite{Paige1982}. First, the QR factorization  
\begin{equation}\label{givens}
	\hat{Q}_k\begin{pmatrix}
		B_k & \beta_{1}e_{1}^{(k+1)}
	\end{pmatrix}
	=\begin{pmatrix}
		R_k & f_k \\
		    & \bar{\phi}_{k+1}
	\end{pmatrix}=
	\left(\begin{array}{ccccc:c}		
		\rho_1& \theta_2 &  &  &  & \phi_1 \\		
		& \rho_2 & \theta_3 & & & \phi_2 \\		
		& & \ddots & \ddots & & \vdots \\ 		
		& & & \rho_{k-1} & \theta_k & \phi_{k-1} \\		
		& & &  & \rho_k & \phi_k \\	\hdashline	
		& & &  &  & \bar{\phi}_{k+1}	
	\end{array}\right)
\end{equation}
is performed using a series of Givens rotations, where at the $i$-th step the Givens rotation is chosen to zero out $\beta_{i+1}$:
\[\begin{pmatrix}
	c_i & s_i \\
	s_i & -c_i
\end{pmatrix}
\begin{pmatrix}
	\bar{\rho}_i & 0 & \bar{\phi}_i  \\
	\beta_{i+1} & \alpha_{i+1} & 0
\end{pmatrix}
\begin{pmatrix}
	\rho_i & \theta_{i+1} & \phi_i \\
	0 & \bar{\rho}_{i+1}  & \bar{\phi}_{i+1}
\end{pmatrix},\]
and the orthogonal matrix $Q_k$ is the product of these Givens rotation matrices. Since
\begin{equation}\label{res}
	\|B_ky-\beta_1e_{1}^{(k+1)}\|^2=\Big\|\hat{Q}_k\begin{pmatrix}
		B_k & \beta_{1}e_{1}^{(k+1)}
	\end{pmatrix}\begin{pmatrix}
		y \\ -1
	\end{pmatrix}\Big\|^2=\|R_ky-f_k\|^2+|\bar{\phi}_{k+1}|^2,
\end{equation}
the solution to $\min_{y\in\mathbb{R}^k}\|B_ky-\beta_{1}e_{1}^{(k)}\|$ is $y_k=R_{k}^{-1}f_k$. Factorize $R_k$ as
\[R_k=D_k\widehat{R}_k, \ \ 
D_k=\begin{pmatrix}
	\rho_1 & & & \\
	& \rho_2 & & \\
	& & \ddots & \\
	& & & \rho_k
\end{pmatrix}, \ 
\widehat{R}_k=\begin{pmatrix}
	1 & \theta_2/\rho_1 & & \\
	& 1 & \theta_3/\rho_2 & \\
	& & \ddots & \theta_k/\rho_{k-1} \\
	& & & 1
\end{pmatrix},\]
then we get
\begin{equation}\label{up1_x}
	x_k=Q_k y_k=Q_k R_{k}^{-1}f_k=(Q_k\widehat{R}_{k}^{-1})(D_{k}^{-1}f_k).
\end{equation}
Let $W_k=Q_k\widehat{R}_{k}^{-1}=(w_1,\dots,w_k)$. By using back substitution for solving $W_k\widehat{R}_k=Q_k$ we obtain the updating procedure for $x_i$ and $w_i$: 
\begin{equation}\label{up2_x}
	x_{i}=x_{i-1}+(\phi_{i}/\rho_{i})w_{i}, \ \ \  
	w_{i+1}=q_{i+1}-(\theta_{i+1}/\rho_{i})w_{i},
\end{equation}
which is described in Algorithm \ref{alg2}.

\begin{algorithm}[htb]
	\caption{Updating procedure}\label{alg2}
	\begin{algorithmic}[1]
		\State {Let $x_{0}=\mathbf{0},\ w_{1}=q_{1}, \ \bar{\phi}_{1}=\beta_{1},\ \bar{\rho}_{1}=\alpha_{1}$}
		\For{$i=1,2,\ldots,k,$}
		\State $\rho_{i}=(\bar{\rho}_{i}^{2}+\beta_{i+1}^{2})^{1/2}$
		\State $c_{i}=\bar{\rho}_{i}/\rho_{i},\ s_{i}=\beta_{i+1}/\rho_{i}$
		\State$\theta_{i+1}=s_{i}\alpha_{i+1},\ \bar{\rho}_{i+1}=-c_{i}\alpha_{i+1}$
		\State $\phi_{i}=c_{i}\bar{\phi}_{i},\ \bar{\phi}_{i+1}=s_{i}\bar{\phi}_{i} $
		\State $x_{i}=x_{i-1}+(\phi_{i}/\rho_{i})w_{i} $
		\State $w_{i+1}=q_{i+1}-(\theta_{i+1}/\rho_{i})w_{i}$
		\EndFor
	\end{algorithmic}
\end{algorithm}


From the above description, the procedure of updating $x_k$ is constituted of two parts: the Givens QR factorization and the computation of $x_i$ and $w_{i+1}$. First, we investigate the choice of proper computing precision for the Givens QR factorization. Denote the roundoff unit used in this process by $\tilde{\mathbf{u}}$ and assume the computations of other parts are exact. In finite precision arithmetic, after the above process, we have computed the $k$-th iterative solution $\tilde{x}_k=Q_k\tilde{y}_k$ with $\tilde{R}_k\tilde{y}_k=\tilde{f}_k$, where $\begin{pmatrix}
	\tilde{R}_k & \tilde{f}_k \\
		& \tilde{\phi}_{k+1}
\end{pmatrix}$
is the computed $R$-factor of $\begin{pmatrix}
	B_k & \beta_{1}e_{1}^{(k+1)}
\end{pmatrix}$. Using the backward error analysis result about the Givens QR factorization as \eqref{givens}, there exist an orthogonal matrix $\tilde{Q}_k\in\mathbb{R}^{(k+1)\times (k+1)}$ such that 
\begin{equation}\label{givens_pertur}
	\tilde{Q}_k\left[\begin{pmatrix}
		B_k & \beta_{1}e_{1}^{(k+1)}
	\end{pmatrix} + \begin{pmatrix}
		\Delta_{k}^{B} & \delta_{k}^{\beta}
	\end{pmatrix}\right] = \begin{pmatrix}
		\tilde{R}_k & \tilde{f}_k \\
			& \tilde{\phi}_{k+1}
	\end{pmatrix} 
\end{equation}
where $\tilde{R}_k\in\mathbb{R}^{k \times k}$ is upper triangular and
\[\left\|\Delta_{k}^{B}\right\| / \left\|B_k\right\| \leq c_1(k)\tilde{\mathbf{u}}+\mathcal{O}(\tilde{\mathbf{u}}^2), \ \ \
\|\delta_{k}^{\beta}\|/\beta_1  \leq c_1(k)\tilde{\mathbf{u}}+\mathcal{O}(\mathbf{u}^2) \]
with a moderate value $c_{1}(k)$ depending on $k$; see \cite[Theorem 19.10]{Higham2002}. The above relation means that $\tilde{Q}_k$ is the $Q$-factor of a perturbed $\begin{pmatrix}
	B_k & \beta_{1}e_{1}^{(k+1)}
\end{pmatrix}$, and $\tilde{R}_k$ is the $R$-factor of a perturbed $B_k$. Using the perturbation analysis result about QR factorizations, we have
\begin{align}
	\frac{\|\tilde{R}_k-R_k\|}{\|R_k\|} &\leq c_2(k)\kappa(R_k)\tilde{\mathbf{u}} + \mathcal{O}(\tilde{\mathbf{u}}^2), \\
	\|\tilde{Q}_k-\hat{Q}_k\| &\leq c_3(k)\kappa(R_k)\tilde{\mathbf{u}} + \mathcal{O}(\tilde{\mathbf{u}}^2),
\end{align}
where $\kappa(R_k)=\|R_k\|\|R_{k}^{-1}\|$ is the condition number of $R_k$, and $c_{2}(k)$ and $c_{3}(k)$ are two moderate values depending on $k$; see \cite[\S 19.9]{Higham2002}. Note that the $F$-norm result appeared in \cite[\S 19.9]{Higham2002} also applies to the above 2-norm result besides a difference of multiplicative factor depending on $k$. Thus, we have
\begin{align}
	\|\tilde{f}_k-f_k\|
	&\leq \left\|\tilde{Q}_{k}\left(\beta_{1}e_{1}^{(k+1)}+ \delta_{k}^{\beta}\right) -\hat{Q}_k\beta_{1}e_{1}^{(k+1)} \right\| \nonumber \\
	&\leq \beta_1\|\tilde{Q}_k-\hat{Q}_k\| + \|\tilde{Q}_{k}\delta_{k}^{\beta}\| \nonumber \\
	&\leq \beta_1\left[c_3(k)\kappa(R_k)+c_1(k)\right]\tilde{\mathbf{u}} + \mathcal{O}(\tilde{\mathbf{u}}^2). \label{er_fk}
\end{align}
Note that $R_ky_k=f_k$. Using the perturbation analysis result about this linear system \cite[\S 2.6.4]{Golub2013}, if $\|R_{k}^{-1}(\tilde{R}_k-R_k)\|<1$, we get
\begin{align*}
	\frac{\|\tilde{y}_k-y_k\|}{\|y_k\|}
	&\leq \frac{\kappa(R_k)}{1-\kappa(R_k)\frac{\|\tilde{R}_k-R_k\|}{\|R_k\|}}\left(\frac{\|\tilde{R}_k-R_k\|}{\|R_k\|}+\frac{\|\tilde{f}_k-f_k\|}{\|f_k\|}\right) \\
	&\leq \frac{\kappa(R_k)}{1-c_2(k)\kappa(R_k)^2\tilde{\mathbf{u}}}\left(c_2(k)\kappa(R_k)+\frac{\beta_1c_3(k)\kappa(R_k)+c_1(k)}{(\beta_{1}^2-\bar{\phi}_{k+1}^2)^{1/2}}\right)\tilde{\mathbf{u}}+ \mathcal{O}(\tilde{\mathbf{u}}^2).
\end{align*}
Notice that $x_k=Q_ky_k$ and $\tilde{x}_k=Q_k\tilde{y}_k$, where $Q_k$ is computed by LBFRO in finite precision with roundoff unit $\mathbf{u}$. We get $\|\tilde{x}_k-x_k\|\leq \|Q_k\|\|\tilde{y}_k-y_k\|\leq \|\tilde{y}_k-y_k\|\left(1+\mathcal{O}(\mathbf{u})\right)$, where we have used $\|Q_k-\bar{Q}_k\|=\mathcal{O}(\mathbf{u})$ and $\|\bar{Q}_k\|=1$ by \eqref{2.15}. Using Theorem \ref{lem3.1} and $\|y_k\|=\|\bar{Q}_{k}y_k\|=\|\bar{x}_k\|$, we obtain
\begin{align}
	& \ \ \ \ \frac{\|\tilde{x}_k-x_k\|}{\|x_k\|} 
	= \frac{\|\tilde{x}_k-x_k\|}{\|\bar{x}_k\|}\frac{\|\bar{x}_k\|}{\|x_k\|} 
	\leq \frac{\|\tilde{y}_k-y_k\|\left(1+\mathcal{O}(\mathbf{u})\right)}{\|y_k\|} \left(1+\mathcal{O}(\mathbf{u})\right)  \nonumber \\
	&\leq \frac{\kappa(R_k)}{1-c_2(k)\kappa(R_k)^2\tilde{\mathbf{u}}}\left(c_2(k)\kappa(R_k)+\frac{\beta_1c_3(k)\kappa(R_k)+c_1(k)}{(\beta_{1}^2-\bar{\phi}_{k+1}^2)^{1/2}}\right)\tilde{\mathbf{u}} \nonumber \\
	& \ \ \ \ + \mathcal{O}(\mathbf{u}\tilde{\mathbf{u}}+\tilde{\mathbf{u}}^2). \label{bnd0}
\end{align} 
The upper bound \eqref{bnd0} grows up at the speed of $\kappa(R_k)^2\tilde{\mathbf{u}}=\kappa(B_k)^2\tilde{\mathbf{u}}$. By Theorem \ref{thm2.1}, $B_k$ is the projection of $A+E$ on $\mathrm{span}\{\bar{P}_{k+1}\}$ and $\mathrm{span}\{\bar{Q}_{k}\}$, and it gradually becomes ill-conditioned since $A+E$ is a slight perturbation of the ill-conditioned matrix $A$. This implies that a lower computing precision used by the Givens QR factorization will lead to a loss of accuracy of the computed solution. Therefore, in practical computation, we need to use double precision to perform it. Fortunately, the Givens QR factorization (Line 3--6 in Algorithm \ref{alg2}) can always be performed very quickly since only operations of scalars are involved. In contrast, the updating of $x_i$ and $w_{i+1}$ involves vector operations, thereby it is better to be performed using lower precision. The proper choice of the computing precision for it will be analyzed in the following part.

In practical computation of the iterative regularization algorithm, the $k$-step LBFRO need not to be implemented in advance, while it should be done in tandem with the updating procedure. An early stopping criterion such as DP or L-curve criterion is used to estimate the semi-convergence point. The whole process can be summarized in Algorithm \ref{alg3} as a mixed precision variant of LSQR for linear ill-posed problems.

\begin{algorithm}[htb]
	\caption{Mixed precision variant of LSQR for \eqref{1.1}}\label{alg3}
	\begin{algorithmic}[1]
		\Require $A$, $b$, $x_{0}=\mathbf{0}$
		\For{$k=1,2,\ldots,$}
		\State Compute $p_k$, $q_k$, $\alpha_k$, $\beta_k$ by the LBFRO  \algorithmiccomment{roundoff unit is $\mathbf{u}$}
		\State Compute $\rho_k$, $\theta_{k+1}$, $\bar{\rho}_{k+1}$, $\phi_{k}$, $\bar{\phi}_{k+1}$ by the updating procedure  \algorithmiccomment{double precision}
		\State Compute $x_k$, $w_{k+1}$ by the updating procedure  \algorithmiccomment{roundoff unit is $\mathbf{\bar{u}}$}
		\If{Early stopping criterion is satisfied}  \algorithmiccomment{DP or L-curve criterion}
		\State The semi-convergence point is estimated as $k_1$
		\State Terminate the iteration
		\EndIf
		\EndFor
		\Ensure Final regularized solution $\hat{x}_{k_1}$  \algorithmiccomment{Computed solution corresponding to $x_{k_1}$}
	\end{algorithmic}
\end{algorithm}

To analyze the choice of $\mathbf{\bar{u}}$, we use the following model \cite[\S 2.7.3]{Golub2013} for the floating point arithmetic:
\begin{align}\label{model1}
	\mbox{fl}(a \ \mbox{op} \ b)=(a \ \mbox{op} \ b)(1+\epsilon), \ \ \
	|\epsilon|\leq \mathbf{\bar{u}}, \ \ \mbox{op}=+,-,*,/ .
\end{align}
Under this model, we have the following rounding error results for matrix and vector computations \cite[\S 2.7.8]{Golub2013}:
\begin{align}
	& \mbox{fl}(u+\alpha v)=u+\alpha v+w, \ \ \ 
	|w|\leq (|u|+2|\alpha v|)\mathbf{\bar{u}}+\mathcal{O}(\mathbf{\bar{u}}^2),\label{model2} \\
	& \mbox{fl}(AB)=AB+X, \ \ \ 
	|X|\leq n|A||B|\mathbf{\bar{u}}+\mathcal{O}(\mathbf{\bar{u}}^2). \label{model3}
\end{align}
where $u,v$ are vectors, $\alpha$ is a scalar, and $A, B$ are two matrices of orders $m\times n$ and $n\times l$, respectively. In \eqref{model2} and \eqref{model3}, the notation $|\cdot|$ is used to denote the absolute value of a matrix or vector and $``\leq"$ means the relation $``\leq"$ holds componentwise. We remark that \eqref{model3} applies to both dot-product and outer-product based procedures for matrix multiplications; see \cite[\S 1.1, \S 2.7]{Golub2013} for these two types of computation of matrix multiplications and the corresponding rounding error analysis results.

Denote by $\hat{x}_k$ and $\hat{w}_k$ the computed quantities where the roundoff unit of the  computing precision is $\mathbf{\bar{u}}$ and the Givens QR factorization is performed with double precision. To avoid cumbersome using of notations, in the following analysis, notations such as $v_k$, $f_k$, $x_k$, $w_k$ denote the computed quantities for the process that the LBFRO is implemented in finite precision with roundoff unit $\mathbf{u}$ while the Givens QR factorization is implemented in double precision and other computations are exact. Using the above model, the computation of $\hat{w}_i$ in finite precision arithmetic can be formed as:
\[\left\{
		\begin{array}{l}
			\hat{w}_1 = v_1+\delta_{1}^{w} \\
			\hat{w}_2 =v_{2}-(\theta_{2}/\rho_{1})\hat{w}_{1}+\delta_{2}^{w} \\
			\ \ \ \ \ \vdots \\
			\hat{w}_k =v_{k}-(\theta_{k}/\rho_{k-1})\hat{w}_{k-1}+\delta_{k}^{w}
			\end{array}
\right. \]
where 
\[\|\delta_{1}^{w}\|\leq \|v_1\|\mathbf{\bar{u}}+\mathcal{O}(\mathbf{\bar{u}}^2)=\mathbf{\bar{u}}+\mathcal{O}(\mathbf{\bar{u}}^2)\]
and 
\[\|\delta_{i}^{w}\|\leq (\|v_i\|+2\|(\theta_{i}/\rho_{i-1})\hat{w}_{i-1}\|)\mathbf{\bar{u}}+\mathcal{O}(\mathbf{\bar{u}}^2)= (1+2\|(\theta_{i}/\rho_{i-1})\hat{w}_{i-1}\|)\mathbf{\bar{u}}+\mathcal{O}(\mathbf{\bar{u}}^2).\]
Let $h_i=\hat{w}_i-w_i$. Then we have
\[h_{i+1}=v_{i+1}-(\theta_{i+1}/\rho_{i})\hat{w}_{i}+\delta_{i}^{w}-(v_{i+1}-(\theta_{i+1}/\rho_{i})w_{i})=-(\theta_{i+1}/\rho_{i})h_{i}+\delta_{i}^{w},\]
which leads to
\begin{equation}\label{4.7}
	H_k\widehat{R}_k = \Delta_{k}^{w}
\end{equation}
with $H_k=(h_{1},\dots,h_{k})$ and $\Delta_{k}^{w}=(\delta_{1}^{w},\dots,\delta_{k}^{w})$. Therefore, we have
\begin{align*}
	\|\Delta_{k}^{w}\| &\leq \sqrt{k}\max_{1\leq i \leq k}\|\delta_{i}^{w}\| 
	\leq \sqrt{k}[1+2\max_{1\leq i \leq k}(\theta_{i+1}/\rho_{i})\|w_i+h_i\|]\mathbf{\bar{u}}+\mathcal{O}(\mathbf{\bar{u}}^2) \\
	&\leq \sqrt{k}[1+2\|\widehat{R}_k\|(\|W_k\|+\|\Delta_{k}^{w}\|\|\widehat{R}_{k}^{-1}\|)]\mathbf{\bar{u}}+\mathcal{O}(\mathbf{\bar{u}}^2),
\end{align*}
where we have used $|\theta_{i+1}/\rho_{i}|\leq\|\widehat{R}_k\|$ and $\|h_i\|\leq \|H_{k}\|\leq \|\Delta_{k}^{w}\|\|\widehat{R}_{k}^{-1}\|$. This inequality leads to
\[(1-2\sqrt{k}\kappa(\widehat{R}_k)\mathbf{\bar{u}})\|\Delta_{k}^{w}\| \leq
	\sqrt{k}(1+2\|\widehat{R}_k\|\|W_k\|)\mathbf{\bar{u}}+\mathcal{O}(\mathbf{\bar{u}}^2),\]
where $\kappa(\widehat{R}_k)=\|\widehat{R}_k\|\|\widehat{R}_{k}^{-1}\|$ is the condition number of $\widehat{R}_k$. Notice that $\|W_{k}\|\leq\|Q_{k}\|\|\widehat{R}_{k}^{-1}\|=\|\widehat{R}_{k}^{-1}\|+\mathcal{O}(\mathbf{u})$ where we have used $\|Q_k-\bar{Q}_k\|=\mathcal{O}(\mathbf{u})$ and $\|\bar{Q}_k\|=1$ by \eqref{2.15}. We obtain the upper bound on $\|\Delta_{k}^{w}\|$:
\begin{equation}\label{4.8}
	\|\Delta_{k}^{w}\| \leq
	\sqrt{k}[1+2(1+\sqrt{k})\kappa(\widehat{R}_k)]\mathbf{\bar{u}}+\mathcal{O}(\mathbf{\bar{u}}^2+\mathbf{\bar{u}}\mathbf{u}).
\end{equation}

Now we can analyze the accuracy of $\hat{x}_k$.
\begin{theorem}\label{thm4.1}
	In Algorithm \ref{alg3}, denote by $x_k$ the computed solution where the Givens QR factorization is performed in double precision and other computations of the updating procedure are exact. Then at each iteration we have
	\begin{equation}\label{4.9}
		\dfrac{\|\hat{x}_k-x_k\|}{\|x_k\|} \leq \sqrt{k}[1+(2+2\sqrt{k}+k)\kappa(\widehat{R}_k)]\mathbf{\bar{u}}+
		\mathcal{O}(\mathbf{\bar{u}}^2+\mathbf{\bar{u}}\mathbf{u}).
	\end{equation}		
\end{theorem}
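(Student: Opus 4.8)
The plan is to track how the rounding errors committed in the vector updates (Lines 7--8 of Algorithm \ref{alg2}, executed with roundoff unit $\mathbf{\bar{u}}$) propagate into $\hat{x}_k$, measuring everything against the reference $x_k$ in which the \emph{same} Givens scalars $\phi_i,\rho_i,\theta_{i+1}$ (computed in double precision) are used but the vector recursions are exact. Because the two processes share identical scalars, the only discrepancies are the already-analyzed error $h_i=\hat{w}_i-w_i$ in the direction vectors and a fresh local roundoff $\delta_i^x$ incurred when forming $\hat{x}_i=\mathrm{fl}(\hat{x}_{i-1}+(\phi_i/\rho_i)\hat{w}_i)$. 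First I would use the model \eqref{model2} to write $\hat{x}_i=\hat{x}_{i-1}+(\phi_i/\rho_i)\hat{w}_i+\delta_i^x$ with $\|\delta_i^x\|\le(\|\hat{x}_{i-1}\|+2|\phi_i/\rho_i|\,\|\hat{w}_i\|)\mathbf{\bar{u}}+\mathcal{O}(\mathbf{\bar{u}}^2)$, subtract the exact update $x_i=x_{i-1}+(\phi_i/\rho_i)w_i$, and obtain the error recursion $g_i=g_{i-1}+(\phi_i/\rho_i)h_i+\delta_i^x$ with $g_0=0$ for $g_i=\hat{x}_i-x_i$, whose unrolled solution is $g_k=\sum_{i=1}^k(\phi_i/\rho_i)h_i+\sum_{i=1}^k\delta_i^x$.

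The first sum collapses algebraically at essentially no new cost. Collecting the coefficients $\phi_i/\rho_i$ into the vector $D_k^{-1}f_k$ gives $\sum_{i=1}^k(\phi_i/\rho_i)h_i=H_k(D_k^{-1}f_k)$, and \eqref{4.7} yields $H_k=\Delta_k^w\widehat{R}_k^{-1}$, so this sum equals $\Delta_k^w\widehat{R}_k^{-1}(D_k^{-1}f_k)=\Delta_k^w y_k$ since $y_k=\widehat{R}_k^{-1}D_k^{-1}f_k$. Its norm is then bounded by $\|\Delta_k^w\|\,\|y_k\|$, into which I would insert the already-proven estimate \eqref{4.8}; this single term is exactly what produces the part $\sqrt{k}[1+2(1+\sqrt{k})\kappa(\widehat{R}_k)]\mathbf{\bar{u}}$ of the final bracket.

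The genuinely new work is the second sum of local roundoffs, which I would bound by controlling each $\|\delta_i^x\|$ uniformly in terms of $\kappa(\widehat{R}_k)$ and $\|y_k\|$. For the coefficient factor, $|\phi_i/\rho_i|=|(D_k^{-1}f_k)_i|\le\|D_k^{-1}f_k\|=\|\widehat{R}_ky_k\|\le\|\widehat{R}_k\|\,\|y_k\|$; for the direction vector, $\|\hat{w}_i\|\le\|w_i\|+\|h_i\|\le\|W_k\|+\mathcal{O}(\mathbf{\bar{u}})\le\|\widehat{R}_k^{-1}\|+\mathcal{O}(\mathbf{u}+\mathbf{\bar{u}})$ using $W_k=Q_k\widehat{R}_k^{-1}$ and $\|Q_k\|=1+\mathcal{O}(\mathbf{u})$ from \eqref{2.15}; and for the partial iterate, writing $x_{i-1}=W_{i-1}(D_{i-1}^{-1}f_{i-1})$ with $D_{i-1}^{-1}f_{i-1}$ a leading subvector of $D_k^{-1}f_k$ and $W_{i-1}$ a submatrix of $W_k$ gives $\|\hat{x}_{i-1}\|\le\kappa(\widehat{R}_k)\|y_k\|+\mathcal{O}(\mathbf{\bar{u}})$. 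Multiplying these out yields $\|\delta_i^x\|\lesssim\kappa(\widehat{R}_k)\|y_k\|\mathbf{\bar{u}}$, and summing the $k$ terms contributes the remaining $\kappa(\widehat{R}_k)$-proportional piece of the bracket, the bookkeeping constant being what turns this into the $\sqrt{k}\cdot k\,\kappa(\widehat{R}_k)$ summand.

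Finally I would divide $\|g_k\|$ by $\|x_k\|$, replacing $\|y_k\|$ by $\|x_k\|$ through $\|y_k\|=\|\bar{Q}_ky_k\|=\|\bar{x}_k\|$ together with Theorem \ref{lem3.1}'s $\|\bar{x}_k\|=\|x_k\|(1+\mathcal{O}(\mathbf{u}))$, and sweep all cross terms $\mathbf{\bar{u}}\mathbf{u}$ and $\mathbf{\bar{u}}^2$ into the stated remainder. The main obstacle is the second sum: one must bound the \emph{partial} iterates $\hat{x}_{i-1}$ and the partial products $(\phi_i/\rho_i)\hat{w}_i$ uniformly over $i$ by the \emph{global} quantities $\kappa(\widehat{R}_k)$ and $\|y_k\|$ — it is precisely the subvector bound on $D_{i-1}^{-1}f_{i-1}$ and the submatrix bound on $W_{i-1}$ that make this possible — while ensuring the already $\mathcal{O}(\mathbf{\bar{u}})$-sized perturbation of $\hat{w}_i$ re-enters only at second order. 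Everything else is triangle-inequality bookkeeping.
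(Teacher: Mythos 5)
Your overall structure coincides with the paper's: the same splitting of $\hat{x}_k-x_k$ into the contribution of the perturbed direction vectors plus the local accumulation roundoff, and your treatment of the first piece is exactly the paper's --- writing $\sum_{i=1}^k(\phi_i/\rho_i)h_i=H_k(D_k^{-1}f_k)=\Delta_k^w\widehat{R}_k^{-1}D_k^{-1}f_k=\Delta_k^w y_k$ and invoking \eqref{4.8} is precisely how the paper produces the $\sqrt{k}\,[1+2(1+\sqrt{k})\kappa(\widehat{R}_k)]$ portion of the bracket. Where you diverge is the second piece. The paper does not run a per-step saxpy analysis: it observes that the recursion \eqref{up2_x} realizes the outer-product procedure for the product $\widehat{W}_k(D_k^{-1}f_k)$ and applies model \eqref{model3} wholesale, obtaining the componentwise bound $|\Delta_k^x|\leq k|\widehat{W}_k||D_k^{-1}f_k|\mathbf{\bar{u}}+\mathcal{O}(\mathbf{\bar{u}}^2)$ and then $\|\Delta_k^x\|\leq k\|\widehat{W}_k\|_F\|D_k^{-1}f_k\|\mathbf{\bar{u}}\leq k^{3/2}\|\widehat{W}_k\|\|D_k^{-1}f_k\|\mathbf{\bar{u}}$; the factor $k^{3/2}$ in \eqref{4.9} arises exactly from this $\|\cdot\|_F\leq\sqrt{k}\|\cdot\|$ step.

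The gap is your assertion that the per-step bookkeeping ``turns this into the $\sqrt{k}\cdot k\,\kappa(\widehat{R}_k)$ summand.'' It does not. Your own uniform estimates give $\|\delta_i^x\|\lesssim\bigl(\kappa(\widehat{R}_k)\|y_k\|+2\kappa(\widehat{R}_k)\|y_k\|\bigr)\mathbf{\bar{u}}=3\kappa(\widehat{R}_k)\|y_k\|\mathbf{\bar{u}}$ for every $i$, hence $\|\sum_i\delta_i^x\|\lesssim 3k\,\kappa(\widehat{R}_k)\|y_k\|\mathbf{\bar{u}}$, and after normalization your final bound reads $\sqrt{k}\,[1+(2+5\sqrt{k})\kappa(\widehat{R}_k)]\mathbf{\bar{u}}$ rather than $\sqrt{k}\,[1+(2+2\sqrt{k}+k)\kappa(\widehat{R}_k)]\mathbf{\bar{u}}$. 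Since $3k>k^{3/2}$ exactly when $k<9$, your bound does not imply the stated inequality \eqref{4.9} for $k\leq 8$: it is an estimate of the same flavor (indeed sharper for $k\geq 9$), but it is not the theorem as stated, and no rearrangement of uniform per-step estimates produces the $k^{3/2}$ factor, which is intrinsically a consequence of the Frobenius-norm step above. The repair is simple: for the local roundoff, do what the paper does and invoke \eqref{model3} directly (the paper explicitly notes it covers outer-product procedures), after which your remaining ingredients --- $\widehat{W}_k-W_k=\Delta_k^w\widehat{R}_k^{-1}$, $\widehat{R}_k^{-1}D_k^{-1}f_k=y_k$, $\|W_k\|\leq\|\widehat{R}_k^{-1}\|+\mathcal{O}(\mathbf{u})$, and the conversion $\|y_k\|\leq\|x_k\|(1+\mathcal{O}(\mathbf{u}))$ via Theorem \ref{lem3.1} --- go through verbatim; your submatrix/subvector bound on $\|\hat{x}_{i-1}\|$, while correct, then becomes unnecessary.
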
	
\begin{proof}
	Notice from \eqref{up1_x} and \eqref{up2_x} that the formation of $\hat{x}_k$ is the matrix multiplication between $\widehat{W}_k=(\hat{w}_1,\dots,\hat{w}_k)$ and $D_{k}^{-1}f_k$ by the outer-product based procedure. Using model \eqref{model3} we have
	\begin{equation}\label{err_xk}
		\hat{x}_k-x_k = \widehat{W}_kD_{k}^{-1}f_k+\Delta_{k}^{x}-W_kD_{k}^{-1}f_k,
	\end{equation}
	where $\hat{x}_k=\widehat{W}_k(D_{k}^{-1}f_k)+\Delta_{k}^{x}$ with $|\Delta_{k}^{x}|\leq k|\widehat{W}_k||D_{k}^{-1}f_k|\mathbf{\bar{u}}+\mathcal{O}(\mathbf{\bar{u}}^2)$. Therefore, we have
	\begin{align*}
		\|\Delta_{k}^{x}\| &\leq \||\Delta_{k}^{x}|\|
		\leq k\||\widehat{W}_k|\|_F\||D_{k}^{-1}f_k|\|\mathbf{\bar{u}}+\mathcal{O}(\mathbf{\bar{u}}^2)\\
		&= k\|\widehat{W}_k\|_F\|D_{k}^{-1}f_k\|\mathbf{\bar{u}}+\mathcal{O}(\mathbf{\bar{u}}^2)\\
		&\leq k^{3/2}\|\widehat{W}_k\|\|D_{k}^{-1}f_k\|\mathbf{\bar{u}}+\mathcal{O}(\mathbf{\bar{u}}^2),
	\end{align*}
	where $\|\cdot\|_F$ is the Frobenius norm of a matrix. By \eqref{4.7}, we have $\widehat{W}_k-W_k=H_k=\Delta_{k}^{w}\widehat{R}_{k}^{-1}$, and thus \[\|\widehat{W}_k\|\leq\|W_k\|+\|H_k\|\leq\|W_k\|+\|\Delta_{k}^{w}\|\|\widehat{R}_{k}^{-1}\|.\]
	Substituting it into the inequality about $\|\Delta_{k}^{x}\|$ and noticing $\widehat{R}_{k}^{-1}D_{k}^{-1}f_k=R_{k}^{-1}f_k=y_k$, we obtain
	\begin{align*}
		\|\Delta_{k}^{x}\| &\leq  k^{3/2}(\|W_k\|+\|\Delta_{k}^{w}\|\|\widehat{R}_{k}^{-1}\|)\|\widehat{R}_k(\widehat{R}_{k}^{-1}D_{k}^{-1}f_k)\|
		\mathbf{\bar{u}}+\mathcal{O}(\mathbf{\bar{u}}^2) \\
		&\leq k^{3/2}\kappa(\widehat{R}_k)\|y_k\|\mathbf{\bar{u}}+\mathcal{O}(\mathbf{\bar{u}}^2+\mathbf{\bar{u}}\mathbf{u}),
	\end{align*}
	where we have used $\|W_{k}\|\leq\|\widehat{R}_{k}^{-1}\|+\mathcal{O}(\mathbf{u})$.
	Using $\widehat{W}_k-W_k=\Delta_{k}^{w}\widehat{R}_{k}^{-1}$ again, we get
	\[\|\widehat{W}_kD_{k}^{-1}f_k-W_kD_{k}^{-1}f_k\| = \|(\widehat{W}_k-W_k)D_{k}^{-1}f_k\|=\|\Delta_{k}^{w}(\widehat{R}_{k}^{-1}D_{k}^{-1}f_k)\| \leq \|\Delta_{k}^{w}\|\|y_k\|. \]
	By \eqref{err_xk} and combining with \eqref{4.8}, we obtain
	\begin{align*}
		\|\hat{x}_k-x_k\|
		&\leq \|\Delta_{k}^{x}\| + \|\widehat{W}_kD_{k}^{-1}f_k-W_kD_{k}^{-1}f_k\| \\
		&\leq \sqrt{k}[1+(2+2\sqrt{k}+k)\kappa(\widehat{R}_k)]\|y_k\|\mathbf{\bar{u}}+\mathcal{O}(\mathbf{\bar{u}}^2+\mathbf{\bar{u}}\mathbf{u}).
	\end{align*}
	Since $\|Q_k-\bar{Q}_k\|=\mathcal{O}(\mathbf{u})$ and $\bar{Q}_k$ is orthonormal, we have $\|Q_{k}^{-1}\|\leq 1/(1-\mathcal{O}(\mathbf{u}))=1+\mathcal{O}(\mathbf{u})$. Using the relations $\|y_k\|=\|Q_{k}^{-1}x_k\|\leq\|x_k\|\left(1+\mathcal{O}(\mathbf{u})\right)$ and $\|\hat{x}_k-x_k\|/\|x_k\|=\|\hat{x}_k-x_k\|/\|y_k\|\cdot \|y_k\|/\|x_k\|$, we finally obtain \eqref{4.9}. \qed
\end{proof}	

Note that $\widehat{R}_k$ is obtained by scaling $R_k$ using the diagonal of it, and the diagonal scaling step can often dramatically reduces the condition number of $R_k$. In fact, we will show in the numerical experiments section that $\kappa(\widehat{R}_k)$ is a moderate value even for an iteration $k$ bigger than the semi-convergence point. By Theorem \ref{thm4.1}, if $\mathbf{u}$ has been chosen such that the best solution among $x_{k}$ can achieve the same accuracy as the best LSQR regularized solution to \eqref{1.1} obtained in exact arithmetic, in order to make the practical updated $\hat{x}_k$ can also achieve the same accuracy, $\mathbf{\bar{u}}$ should be chosen such that the upper bound in \eqref{4.9} is much smaller than $\|x_{opt}-x_{ex}\|/\|x_{ex}\|$. Thanks to \eqref{3.7}, for a not very small noise level, the  single precision roundoff unit $\mathbf{\bar{u}}$ is enough. This ensures that we can use lower precision for updating $x_k$, which is more efficient than using double precision. 

Now we discuss methods for estimating the optimal early stopping iteration, i.e., the semi-convergence point. By \ref{lem3.1} and \eqref{res} we have
\[\bar{\phi}_{k+1}=\|B_ky_{k}-\beta_1 e_{1}^{(k+1)}\|= \lVert (A+E)x_k-(b+\delta_b) \lVert .\]
For the proper choice of $\mathbf{u}$, the noise norm of \eqref{3.2} is 
$\|e-Ex_{ex}+\delta_b\| \approx \|e\|$ since $\|-Ex_{ex}+\delta_b\| \ll \|e\|$. Since $\hat{x}_k$, $x_k$ and $\bar{x}_k$ have the same accuracy for the proper $\mathbf{u}$ and $\bar{\mathbf{u}}$, we only need to estimate the semi-convergence point of LSQR applied to \eqref{3.2}. The discrepancy principle corresponding to \eqref{3.2} can be written as
\[\lVert (A+E)x_k-(b+\delta_b) \lVert \lesssim \tau\|e\| \]
with $\tau> 1$ slightly, and we should stop iteration at the first $k$ satisfying 
\begin{equation}\label{discrepancy}
	\bar{\phi}_{k+1}=\|B_ky_{k}-\beta_1 e_{1}^{(k+1)}\| \leq \tau\|e\|,
\end{equation}
and use this $k$ as the estimate of semi-convergence point, where $\bar{\phi}_{k+1}$ can be efficiently computed by using \ref{alg2}. Numerical experiments will show that this estimate is almost the same as that obtained by the discrepancy principle for LSQR in double precision arithmetic. The discrepancy principle method usually suffers from under-estimating and thus the solution is over-regularized.

Another approach is the L-curve criterion, which does not need $\|e\|$ in advance. The motivation is that one can plot $\left(\log\|Ax_{k}-b\|,\log\|x_{k}\|\right)$ in the shape of an L-curve, and the corner of the curve is a good estimate of the semi-convergence point. The L-curve for \eqref{3.2} is
\[\left(\log\|(A+E)x_k-
	(b+\delta_b)\|,\log\|x_{k}\|\right) ,\]
which is just 
\begin{equation}\label{5.5}
	\left(\log\bar{\phi}_{k+1},\log\|x_{k}\|\right) ,
\end{equation}
where the norm of $x_k$ should be computed at each iteration. A modification of \eqref{5.5} computes the norm of $\hat{x}_k$ instead of $x_k$, and this may make a little difference with the estimate by \eqref{5.5}. Numerical experiments will show that these two estimates are almost the same as that obtained by the L-curve criterion for LSQR in double precision arithmetic. 

Finally, we give a model for comparing computing efficiency between the double and mixed precision implementations of LSQR. We also perform full reorthogonalization of the Lanczos bidiagonalization for the double precision implementation, since without reorthogonalization the convergence behavior is irregular and the convergence rate is much slow. We count the computations involving matrix/vector operations in the two main parts of the algorithm:
\begin{itemize}
	\item[$\bullet$] For the LBFRO process, at each step it takes $\mathcal{O}(mn)$ flops for matrix-vector products and $\mathcal{O}(m+n)$ flops for scalar-vector multiplications; besides, the reorthogonalization at the $k$-th step takes $\mathcal{O}((m+n)k^2)$ flops. Therefore, at each $k$-th iteration, LBFRO takes $\mathcal{O}(mn+(m+n)(k^2+1))$ flops.
	\item[$\bullet$] For the updating procedure, the most time-consuming part is the computation of $x_i$ and $w_{i+1}$, and it takes $\mathcal{O}(n)$ flops.
\end{itemize}
From the above investigation, we find that the matrix-vector products in LBFRO are the most dominant computations in the entire algorithm. In the ideal case, the performance of 32-bit operations is at least twice as fast as that of 64-bit operations on modern computing architectures \cite{abdelfattah2021survey}. Therefore, the proposed mixed precision algorithm can save approximately half the time compared to the original double precision algorithm.

In pracital computations, to give a convincing comparison between the two implementations, the mixed precision algorithm need to be performed on a specific computing architecture supporting well for lower precision computations such as NVIDIA Tesla V100 GPU \cite{V100}, and the codes should be optimized to take full advantage of the computing power. This will de considered in our future work.

\section{Numerical Experiments}\label{sec5}
In this section, we present some numerical experiments to justify the theoretical results obtained. Two mixed precision variants of LSQR are implemented to be compared with the double precision LSQR for several test linear ill-posed problems. We use ``d" to denote the algorithm implemented using double precision, and use ``s+d" and ``s+s" to denote the algorithms that use single precision for LBFRO while use double and single precisions for updating $x_k$, respectively. Note that for ``d'' the Lanczos bidiagonalization is also implemented using full reorthogonalization to avoid delay of convergence. 

For these different implementations, we compare accuracy of the regularized solutions by using the relative reconstruction error
\begin{equation}
	\mathrm{RE}(k) = \frac{\|x_{k}-x_{ex}\|}{\|x_{ex}\|}
\end{equation}
to plot semi-convergence curves, where $x_k$ (for ``s+s'' it should be $\hat{x}_k$) denote the computed solutions produced by the three implementations. We emphasis that computing efficiency in terms of time-to-solution between ``d'', ``s+d'' and ``s+s'' is not compared here, since the purpose of this paper is to verify the feasibility of lower precision LSQR. 

In this paper, we implement the MATLAB codes with MATLAB R2019b to perform numerical experiments, where the roundoff units for double and single precision are $2^{-53}\approx 1.11\times 10^{-16}$ and $2^{-24}\approx 5.96\times 10^{-8}$, respectively. The codes are available at \url{https://github.com/Machealb/Lower_precision_solver}. We choose some one dimensional (1-D) problems from the regularization toolbox \cite{Hansen2007}, and two dimensional (2-D) image deblurring problems from \cite{Gazzola2019}. The description of all test examples is listed in Table \ref{tab5.1}. 

\begin{table}[htp]
	\centering
	\caption{The description of test problems}
	\begin{tabular}{llll}
		\toprule
		Problem     & $m\times n$  & Ill-posedness & Description                \\
		\midrule
		{\sf shaw}  & $1000\times 1000$ & severe & 1-D image restoration model      \\
		{\sf deriv2} & $1000\times 1000$ & moderate & Computation of second derivative  \\
		{\sf gravity}  & $2000\times 2000$ & severe & 1-D gravity surveying problem    \\
		{\sf heat} & $2000\times 2000$  & moderate & Inverse heat equation     \\
		{\sf PRblurspeckle}  & $16384\times 16384$ & mild & 2-D image deblurring problem \\
		{\sf PRblurdefocus} & $65536\times 65536$ & mild  & 2-D image deblurring problem  \\
		\bottomrule
	\end{tabular}
	\label{tab5.1}
\end{table}

\subsection{One dimensional case}\label{sebsec5.1}
For one dimensional problems {\sf shaw}, {\sf deriv2}, {\sf gravity} and {\sf heat}, we use the codes from \cite{Hansen2007} to generate $A$, $x_{ex}$ and $b_{ex}=Ax_{ex}$, and then add a white Gaussian noise $e$ with a prescribed noise level $\varepsilon = \|e\|/\|b_{ex}\|$ to $b_{ex}$ and form the noisy $b=b_{ex}+e$. 

\begin{figure}[htp]
	\begin{minipage}{0.48\linewidth}
		\centerline{\includegraphics[width=5cm,height=3.5cm]{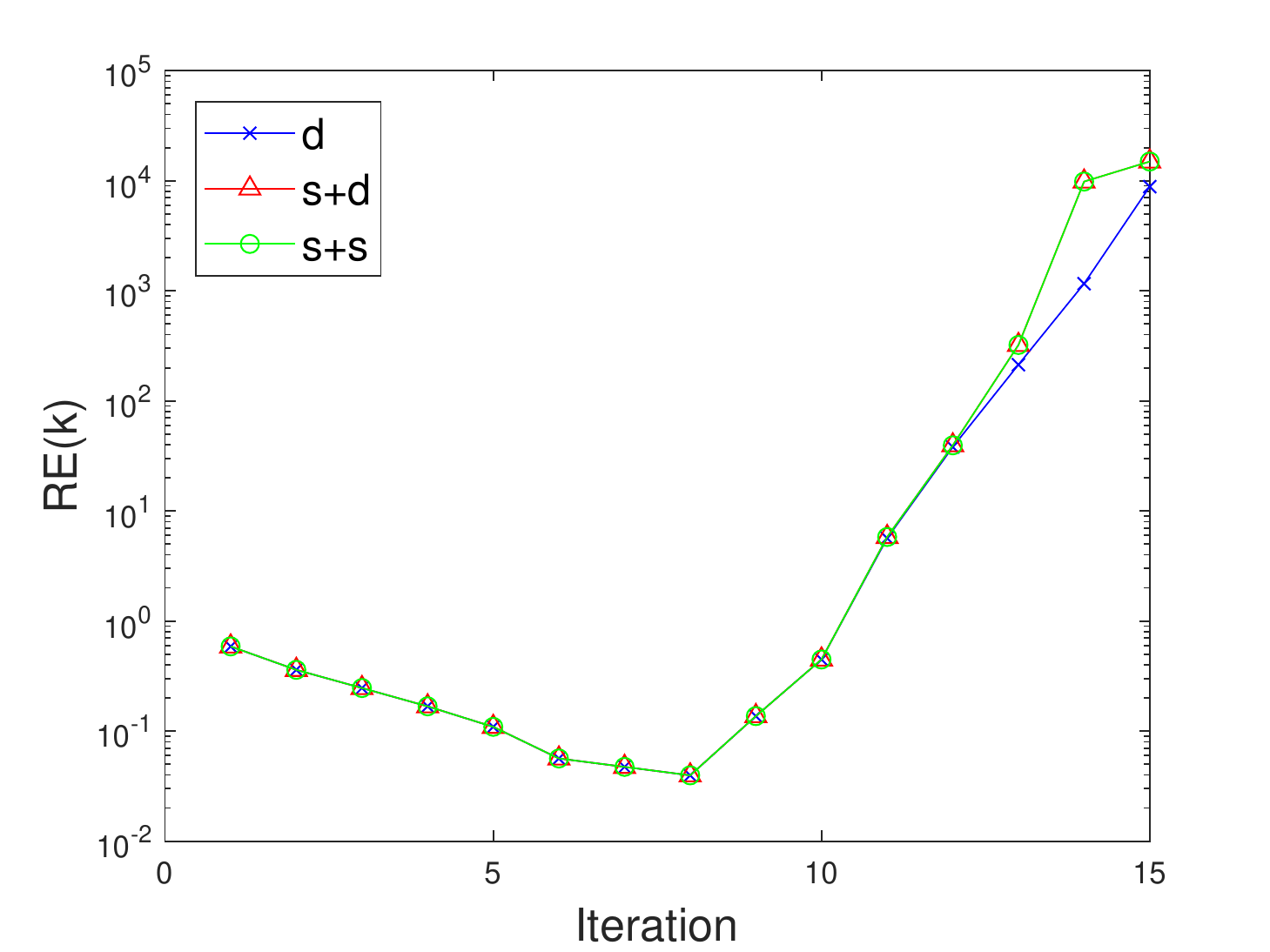}}
		\centerline{(a) {\sf shaw}}
	\end{minipage}
	\hfill
	\begin{minipage}{0.48\linewidth}
		\centerline{\includegraphics[width=5cm,height=3.5cm]{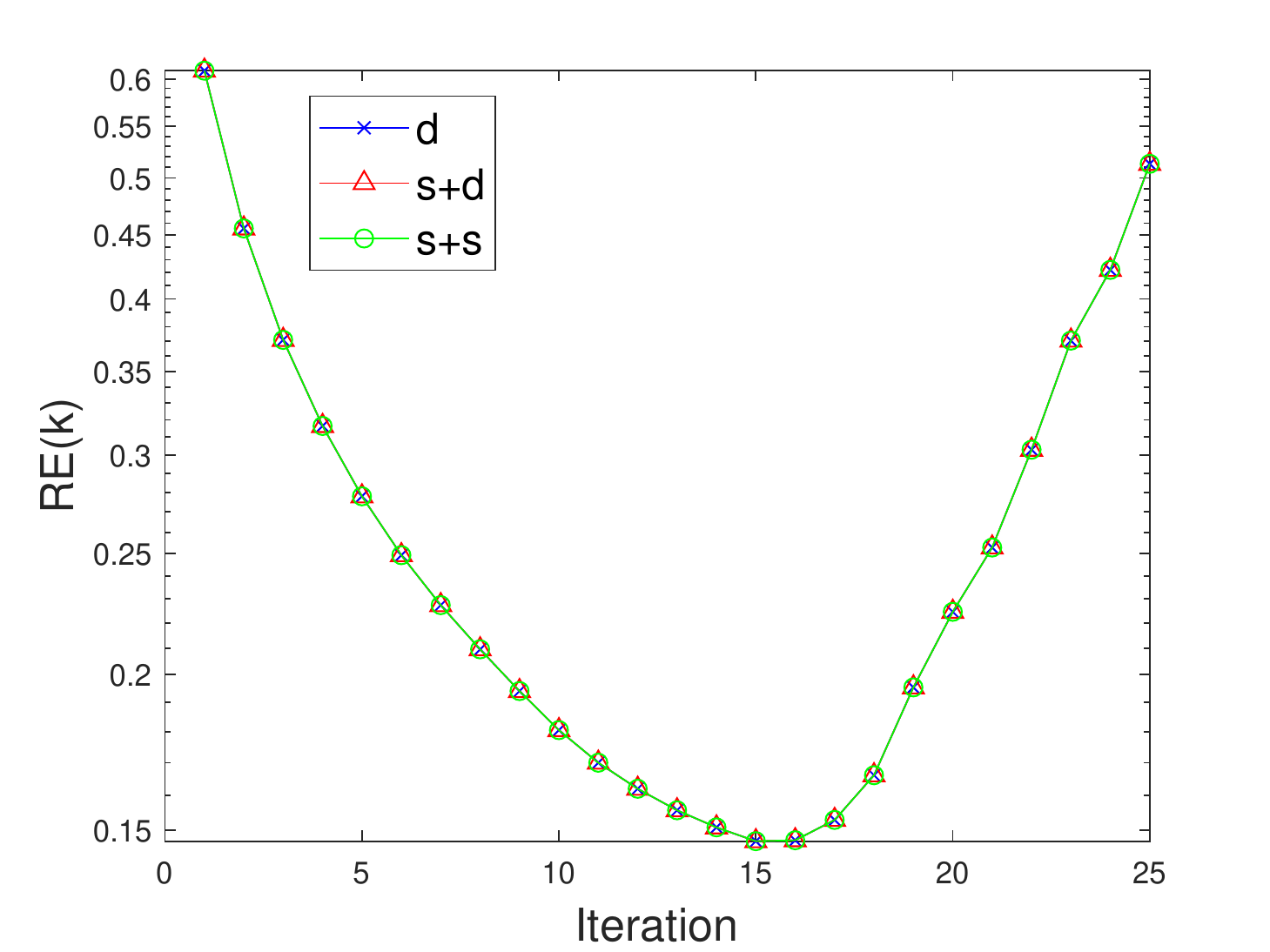}}
		\centerline{(b) {\sf deriv2}}
	\end{minipage}
	\vfill
	\begin{minipage}{0.48\linewidth}
		\centerline{\includegraphics[width=5cm,height=3.5cm]{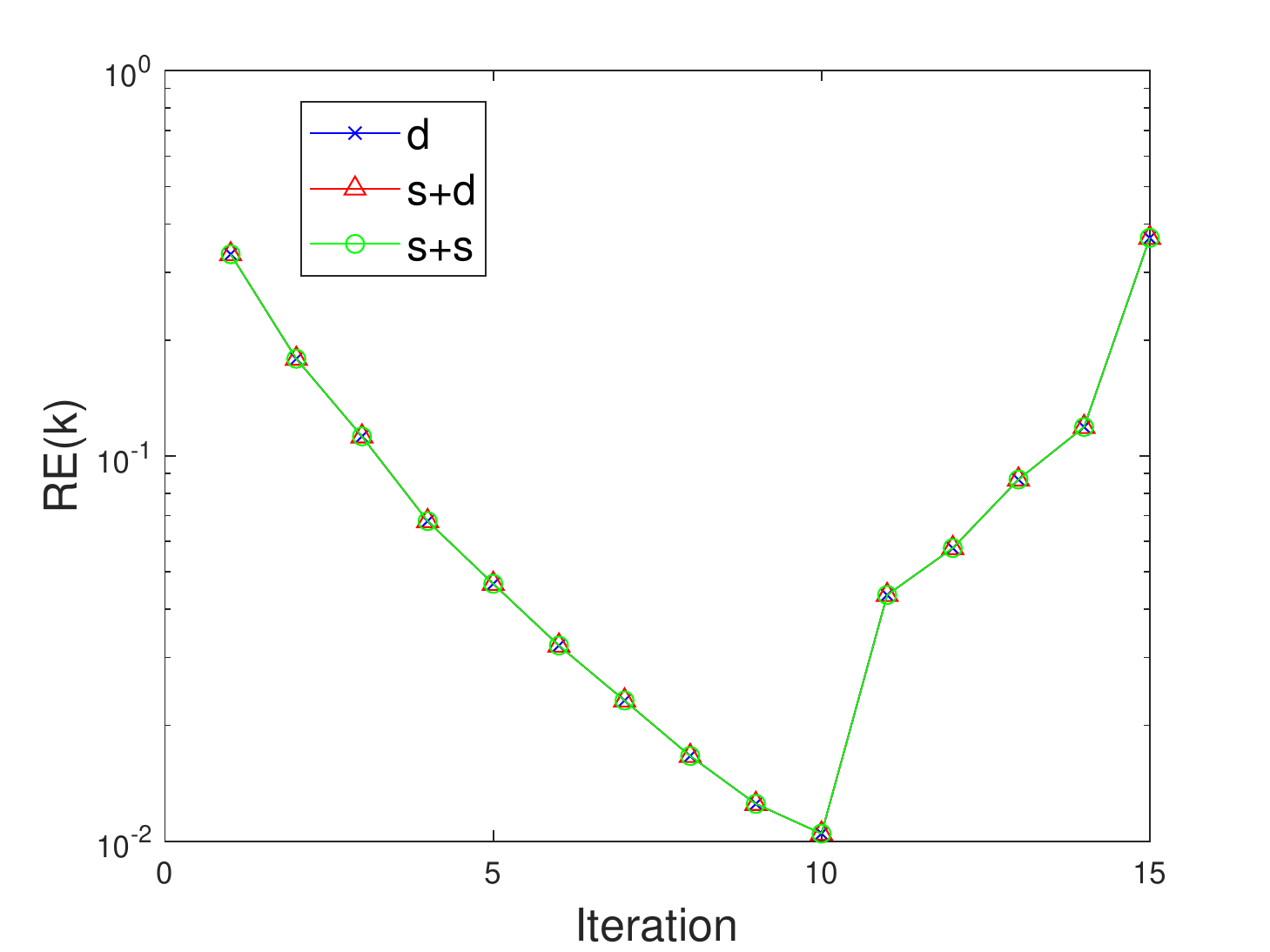}}
		\centerline{(c) {\sf gravity}}
	\end{minipage}
	\hfill
	\begin{minipage}{0.48\linewidth}
		\centerline{\includegraphics[width=5cm,height=3.5cm]{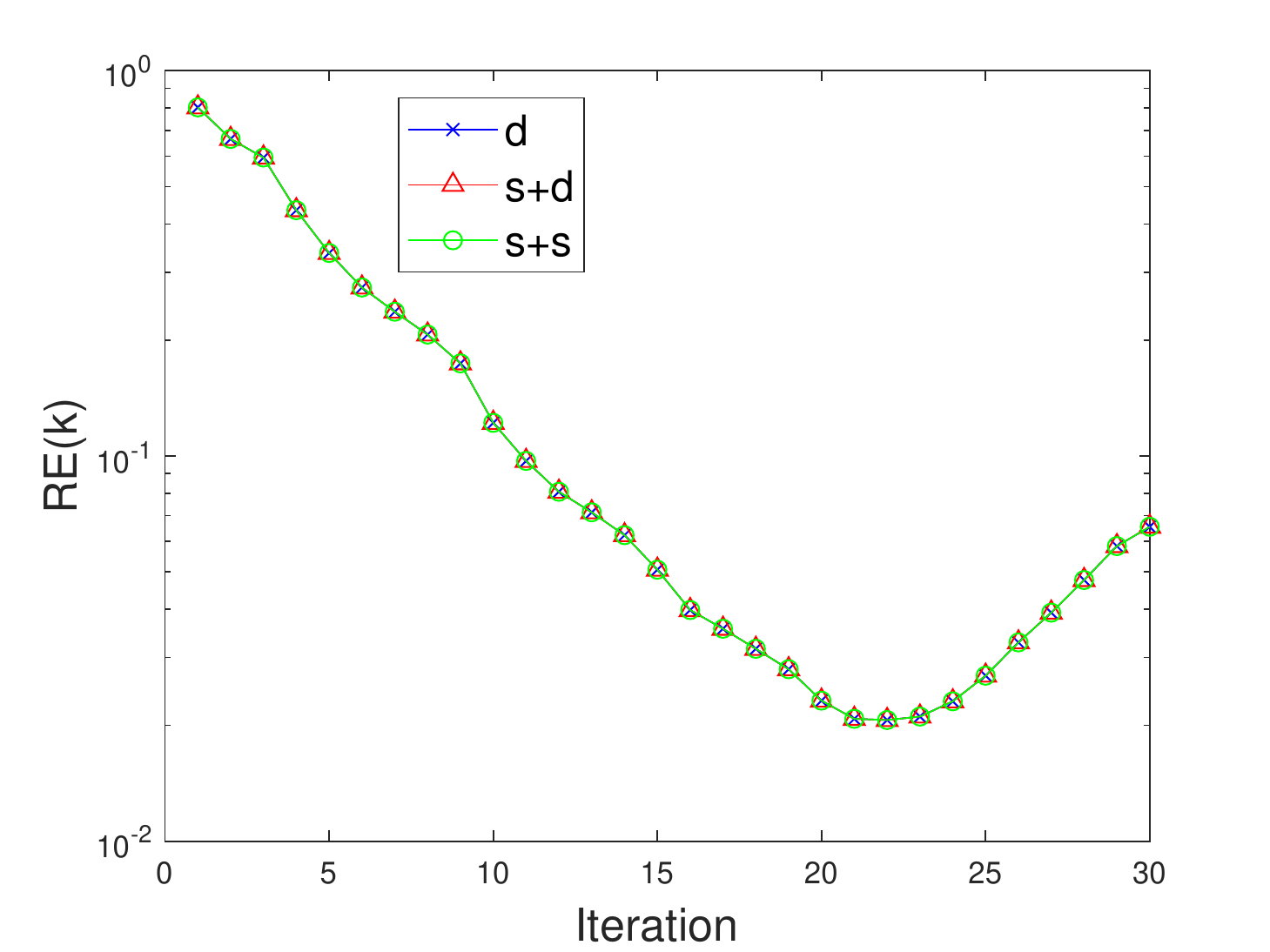}}
		\centerline{(d) {\sf heat}}
	\end{minipage}
	\caption{Semi-convergence curves for LSQR implemented using different computing precisions, $\varepsilon=10^{-3}$.}
	\label{fig5.1}
\end{figure}

\begin{figure}[htp]
	\begin{minipage}{0.48\linewidth}
		\centerline{\includegraphics[width=5cm,height=3.5cm]{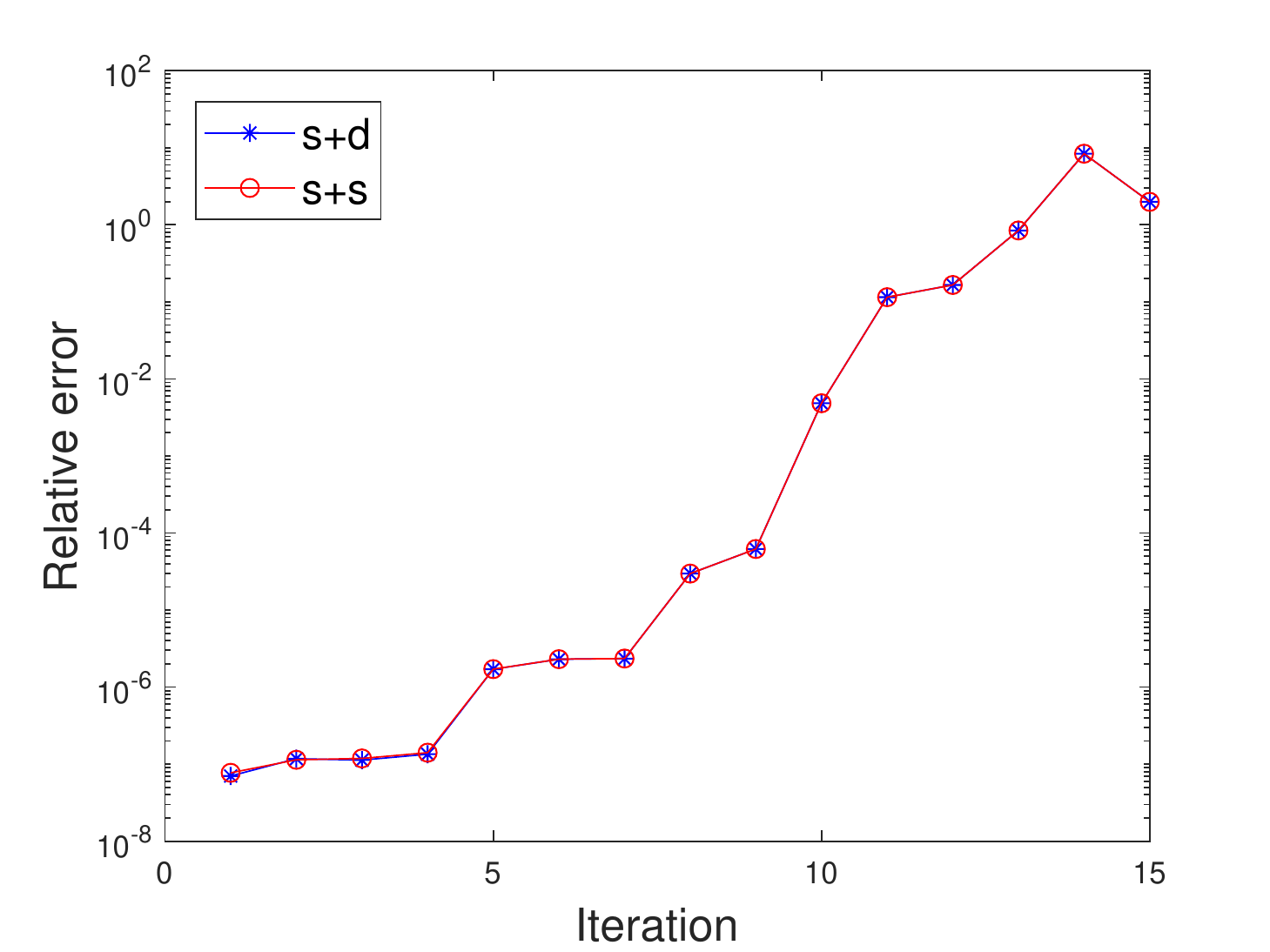}}
		\centerline{(a) {\sf shaw}}
	\end{minipage}
	\hfill
	\begin{minipage}{0.48\linewidth}
		\centerline{\includegraphics[width=5cm,height=3.5cm]{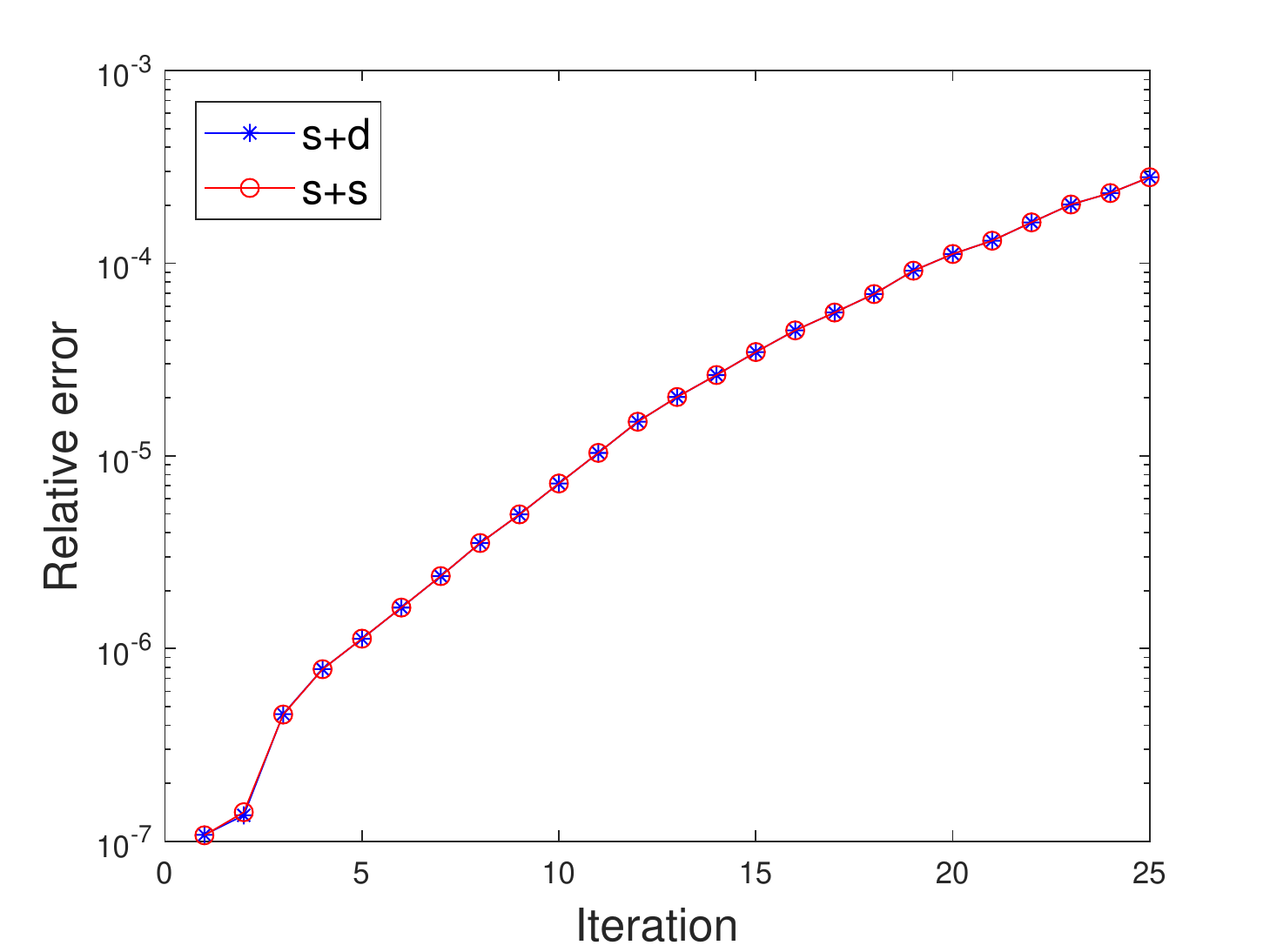}}
		\centerline{(b) {\sf deriv2}}
	\end{minipage}
	\vfill
	\begin{minipage}{0.48\linewidth}
		\centerline{\includegraphics[width=5cm,height=3.5cm]{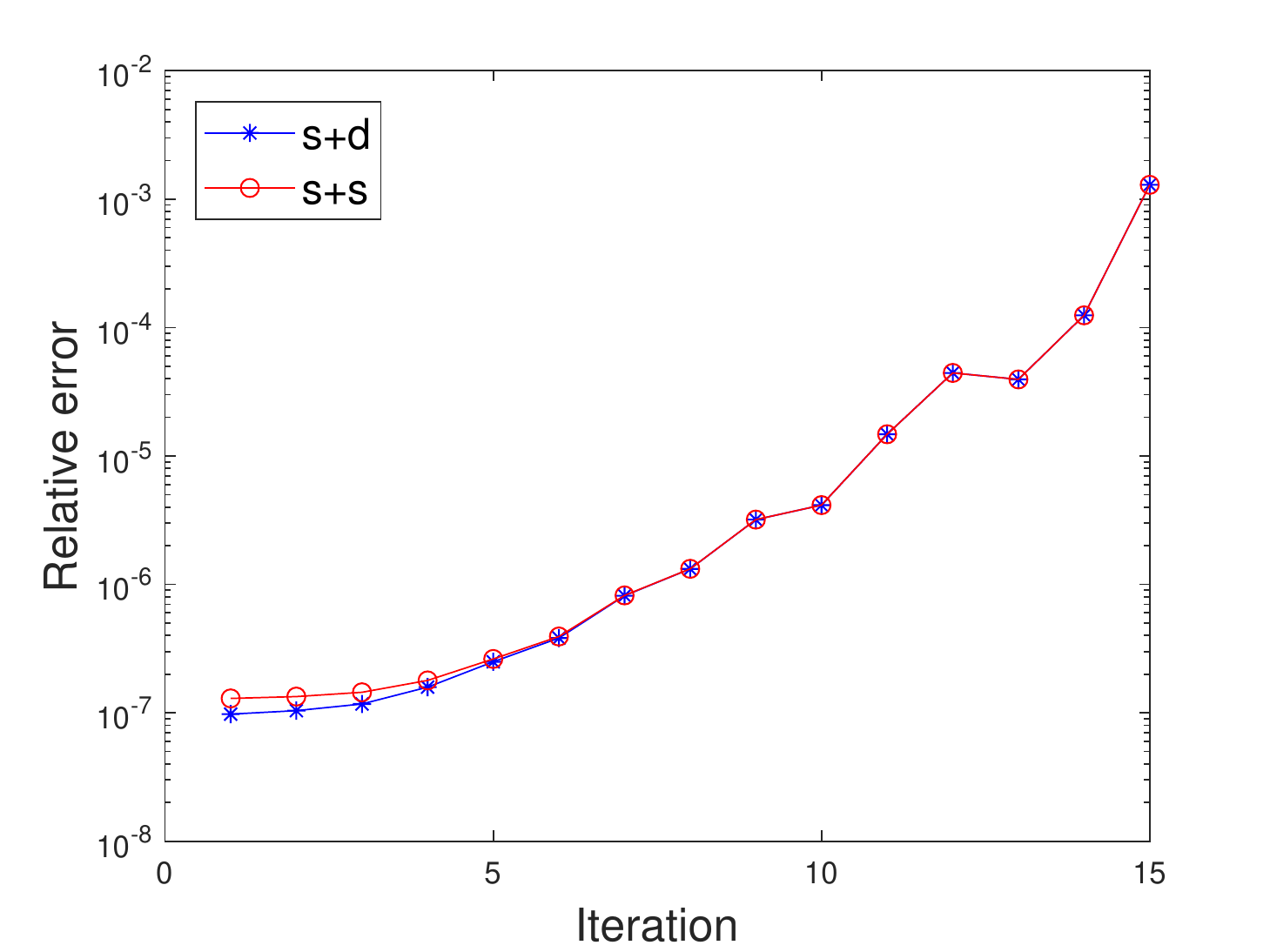}}
		\centerline{(c) {\sf gravity}}
	\end{minipage}
	\hfill
	\begin{minipage}{0.48\linewidth}
		\centerline{\includegraphics[width=5cm,height=3.5cm]{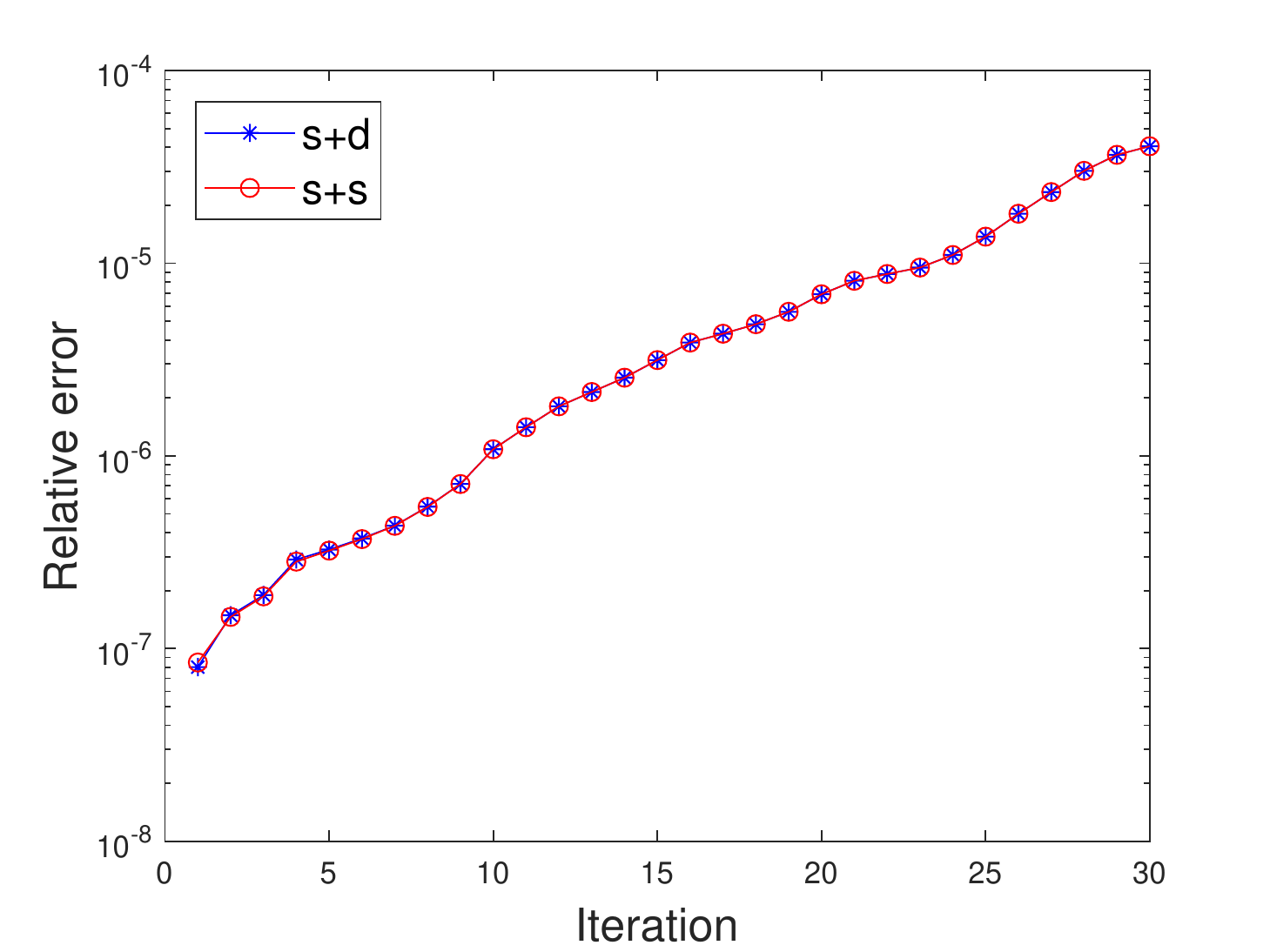}}
		\centerline{(d) {\sf heat}}
	\end{minipage} 
	\caption{Relative errors of the regularized solutions computed by ``s+d"/``s+s" with respect to that by ``d", $\varepsilon=10^{-3}$.}
	\label{fig5.2}
\end{figure}

First, we compare the relative errors $\mathrm{RE}(k)$ for the three different implementations of LSQR when $\varepsilon=10^{-3}$. From Figure \ref{fig5.1} we can find the convergence behaviors of the three implementations ``d", ``s+d" and ``s+s" are of highly consistence. The semi-convergence points $k_{0}$ are the same and the relative error curves coincide until many steps after semi-convergence, and thus the optimal regularized solutions computed by ``d", ``s+d" and ``s+s" have the same accuracy. In order to give a more clear comparison about accuracy of solutions, we also plot the relative error curves of $x_k$/$\hat{x}_k$ computed by ``s+d"/``s+s" with respect to that by ``d". Figure \ref{fig5.2} shows that these two relative errors are much smaller than $\mathrm{RE}(k)$ of ``d" until semi-convergence occurs and this is also true for many iterations afterwards. These results confirm that both the LBFRO and updating procedure can be implemented using single precision without sacrificing any accuracy of final regularized solutions for $\varepsilon=10^{-3}$.

\begin{figure}[htp]
	\begin{minipage}{0.48\linewidth}
		\centerline{\includegraphics[width=5cm,height=3.5cm]{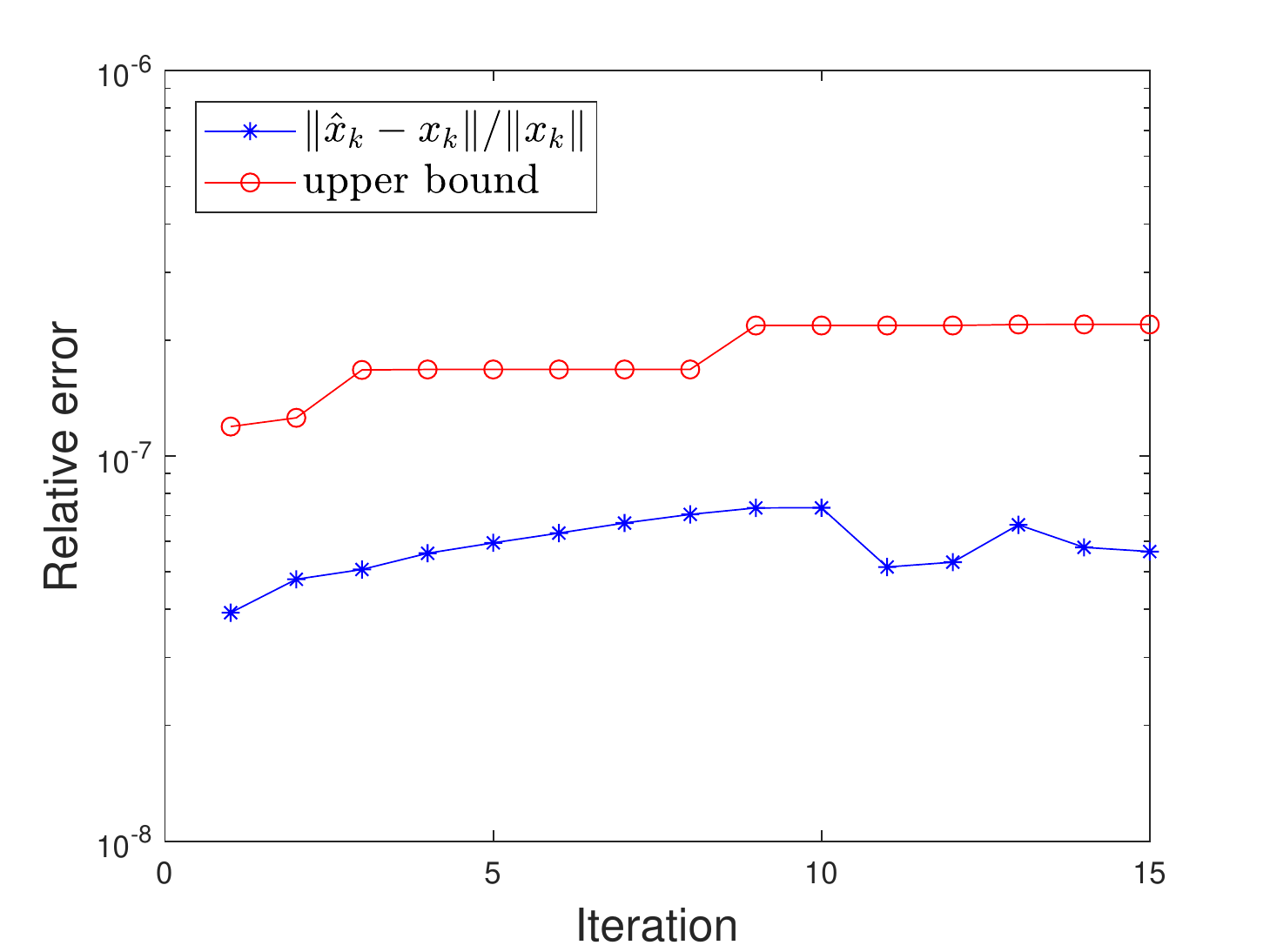}}
		\centerline{(a) {\sf shaw}}
	\end{minipage}
	\hfill
	\begin{minipage}{0.48\linewidth}
		\centerline{\includegraphics[width=5cm,height=3.5cm]{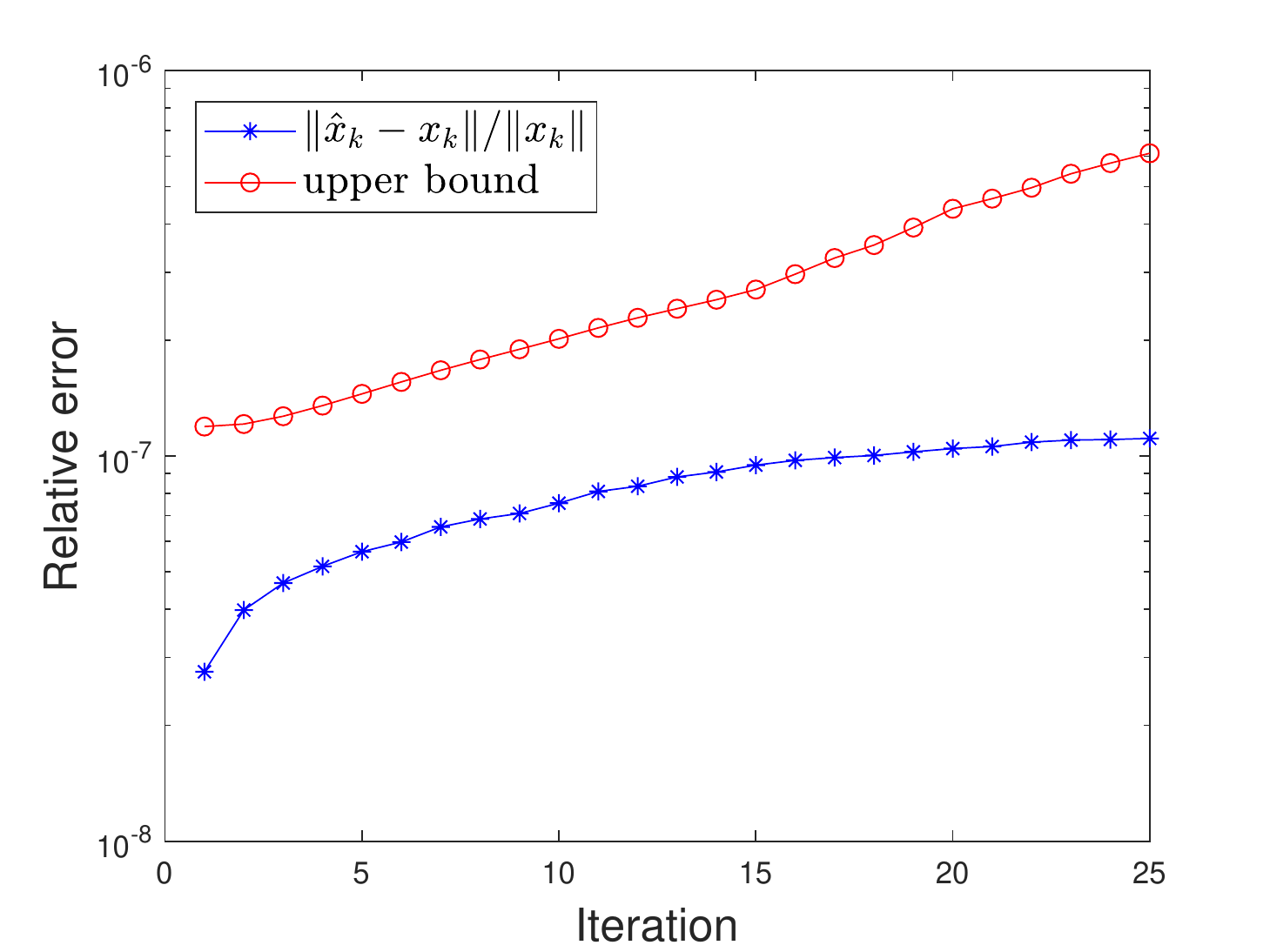}}
		\centerline{(b) {\sf deriv2}}
	\end{minipage}
	\vfill
	\begin{minipage}{0.48\linewidth}
		\centerline{\includegraphics[width=5cm,height=3.5cm]{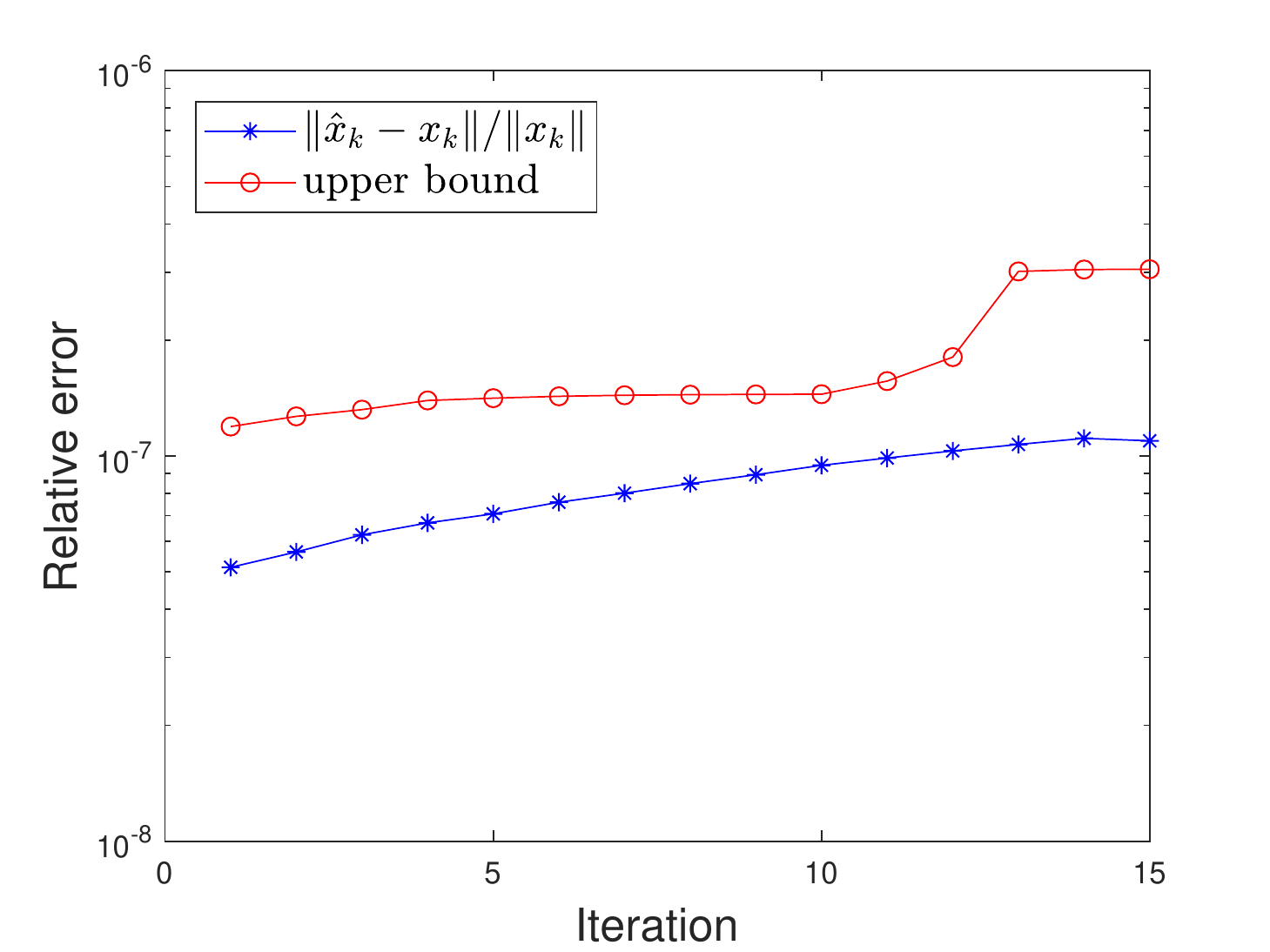}}
		\centerline{(c) {\sf gravity}}
	\end{minipage}
	\hfill
	\begin{minipage}{0.48\linewidth}
		\centerline{\includegraphics[width=5cm,height=3.5cm]{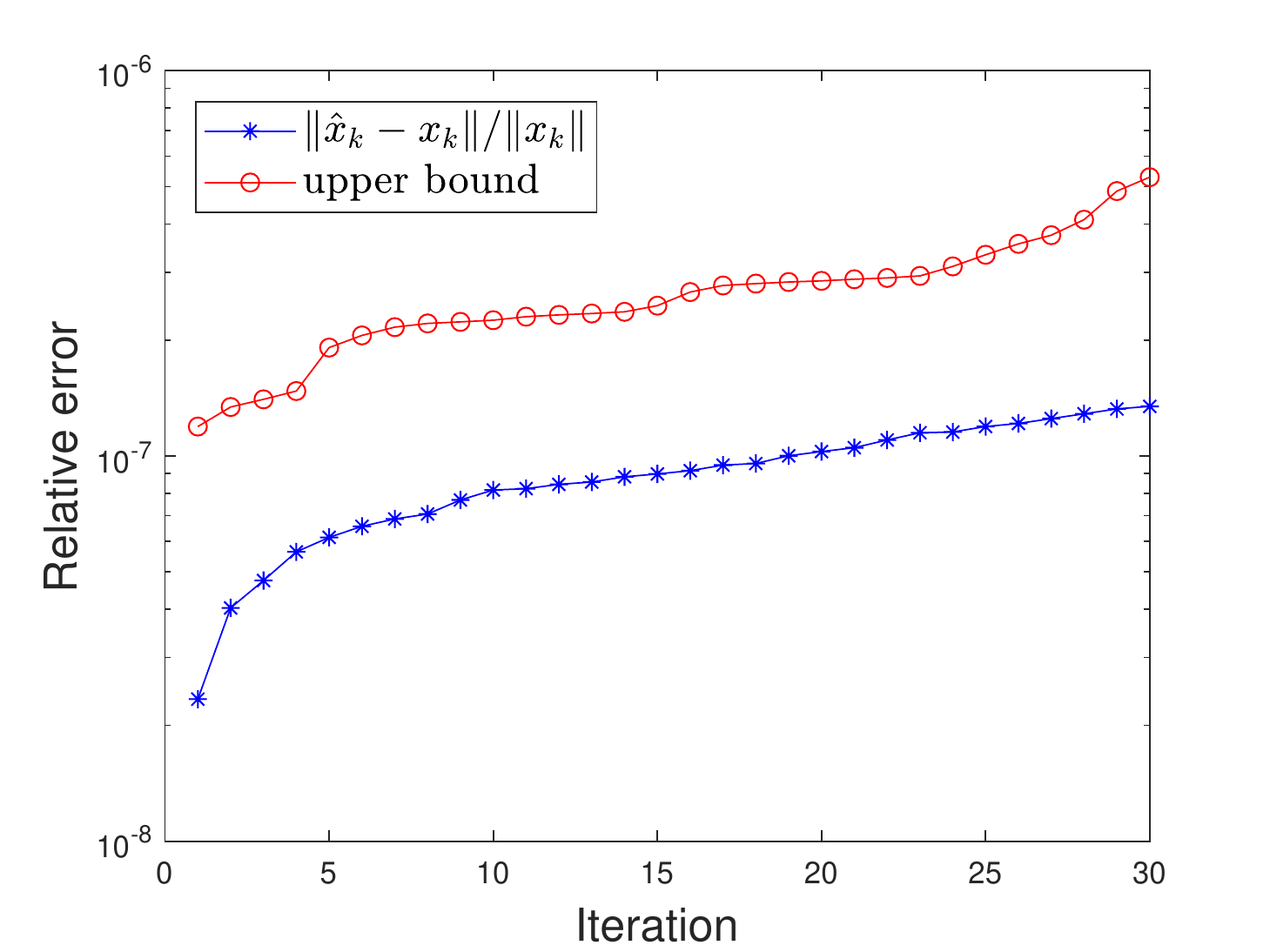}}
		\centerline{(d) {\sf heat}}
	\end{minipage}
	\caption{Relative errors between regularized solutions computed by ``s+d" and ``s+s", $\varepsilon=10^{-3}$.}
	\label{fig5.3}
\end{figure}

To further compare the accuracy of solutions computed by ``s+d" and ``s+s", we plot in Figure \ref{fig5.3} the relative error curves of these two solutions and their upper bounds in \eqref{4.9}. Here we set the upper bounds as $\kappa(\widehat{R}_k)\mathbf{\bar{u}}$ with $\mathbf{\bar{u}}$ the roundoff unit of single precision. From Figure \ref{fig5.3} we can find that $\kappa(\widehat{R}_k)$ for the four test problems grow very slightly, which lead to the upper bounds much smaller than $\mathrm{RE}(k)$. Therefore, the regularized solutions computed by ``s+d" and ``s+s" have the same accuracy, which has already been clearly observed from Figure \ref{fig5.1}.

\begin{table}[htp]
	\centering
	\caption{Comparison of relative errors $\mathrm{RE}(k)$ and estimates of the optimal iteration $k_0$ by L-curve criterion and DP ($\tau=1.001$), $\varepsilon=10^{-3}$.}
		\begin{tabular}{*{5}{c}}
			\toprule
			Work precision 	&{\sf shaw}   &{\sf deriv2}   &{\sf gravity}  &{\sf heat} \\
			\midrule
			&  \multicolumn{3}{c}{Optimal} &    \\
			\midrule
			d  & $0.0396$ ($8$)   & $0.1471$ ($15$)     & $0.0105$ ($10$) & $0.0206$ ($22$)  \\
			s+d &$0.0396$ ($8$) & $0.1471$ ($15$) & $0.0105$ ($10$) & $0.0206$ ($22$)  \\
			s+s &$0.0396$ ($8$) & $0.1471$ ($15$) & $0.0105$ ($10$)  & $0.0206$ ($22$)   \\
			\midrule
			&  \multicolumn{3}{c}{L-curve} &   \\
			\midrule
			d  &$0.0396$ ($8$) & $0.1529$ ($17$) & $0.0105$ ($10$) & $0.0392$ ($27$)  \\
			s+d &$0.0396$ ($8$) & $0.1529$ ($17$)  & $0.0105$ ($10$)  & $0.0392$ ($27$)  \\
			s+s &$0.0396$ ($8$) & $0.1529$ ($17$)  & $0.0105$ ($10$)  & $0.0392$ ($27$)   \\
			\midrule
			&   \multicolumn{3}{c}{Discrepancy principle} &    \\
			\midrule
			d  & $0.0473$ ($7$) & $0.1557$ ($13$) & $0.0166$ ($8$) & $0.0280$ ($19$)  \\
			s+d/s & $0.0473$ ($7$)  & $0.1557$ ($13$) & $0.0166$ ($8$) & $0.0280$ ($19$) \\
			\bottomrule[0.6pt]
	\end{tabular}
	\label{tab5.2}
\end{table}

Table \ref{tab5.2} shows the relative errors of the regularized solutions at the semi-convergence point $k_0$ and the estimates of $k_0$ by L-curve criterion and discrepancy principle, where the corresponding iteration number is in brackets. We find that the three optimal iterations and corresponding $\mathrm{RE}(k)$ for ``d", ``s+d" and ``s+s" are the same, which has also been observed from Figure \ref{fig5.1}. This is also true for the L-curve criterion, and the method gets an over-estimate of $k_0$ for {\sf heat}. For the discrepancy principle, by \eqref{discrepancy} we know that the estimates of $k_0$ for ``s+d" and ``s+s" are always the same since they compute the same $\bar{\phi}_{k+1}$, and we find these estimates are the same at that for ``d". The discrepancy principle gets under-estimates of $k_0$ for the four test problems and thus the solutions are over-regularized.

\begin{figure}[htp]
	\begin{minipage}{0.48\linewidth}
		\centerline{\includegraphics[width=5cm,height=3.5cm]{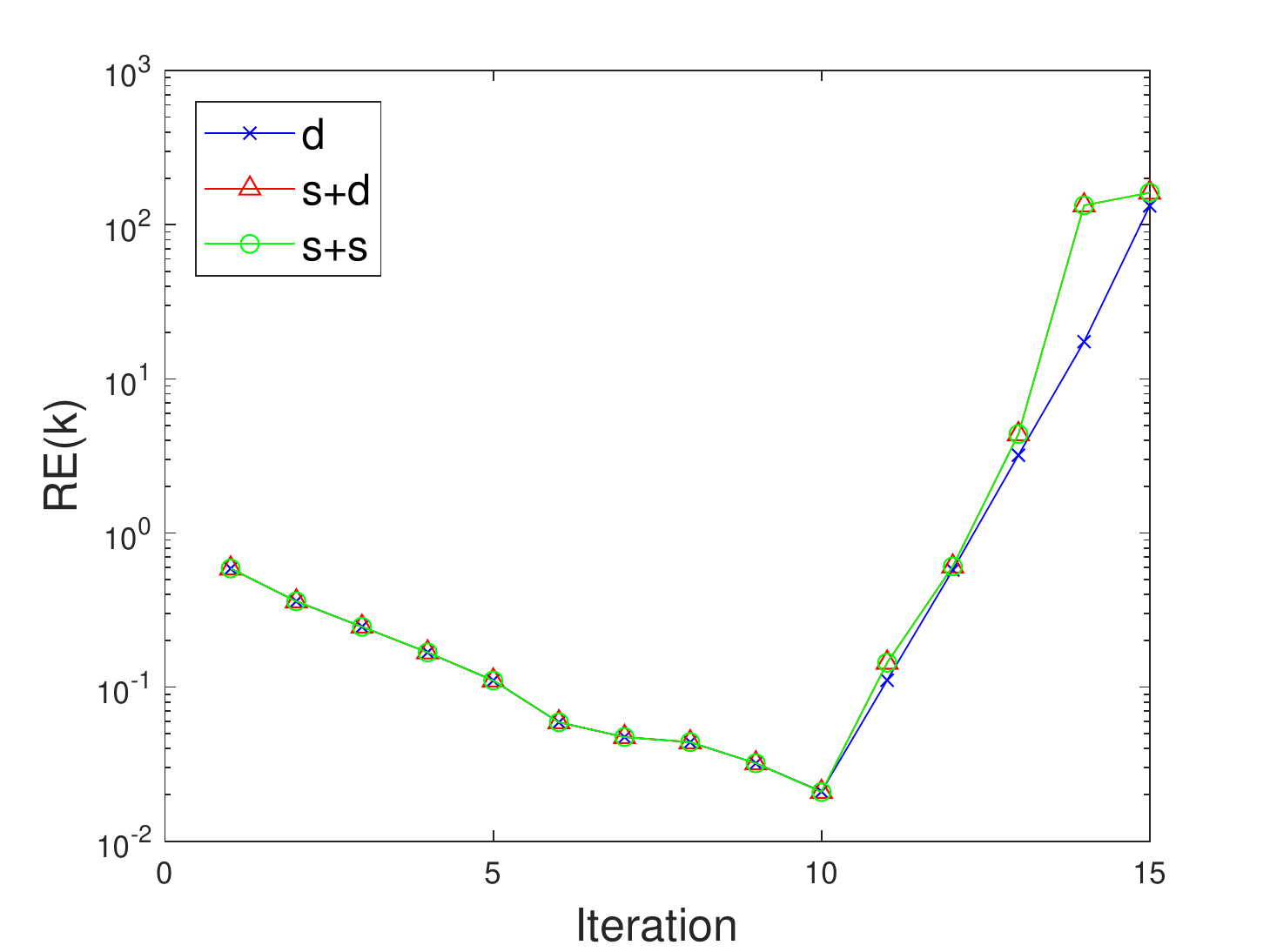}}
		\centerline{(a) {\sf shaw}}
	\end{minipage}\label{fig5.4a}
	\hfill
	\begin{minipage}{0.48\linewidth}
		\centerline{\includegraphics[width=5cm,height=3.5cm]{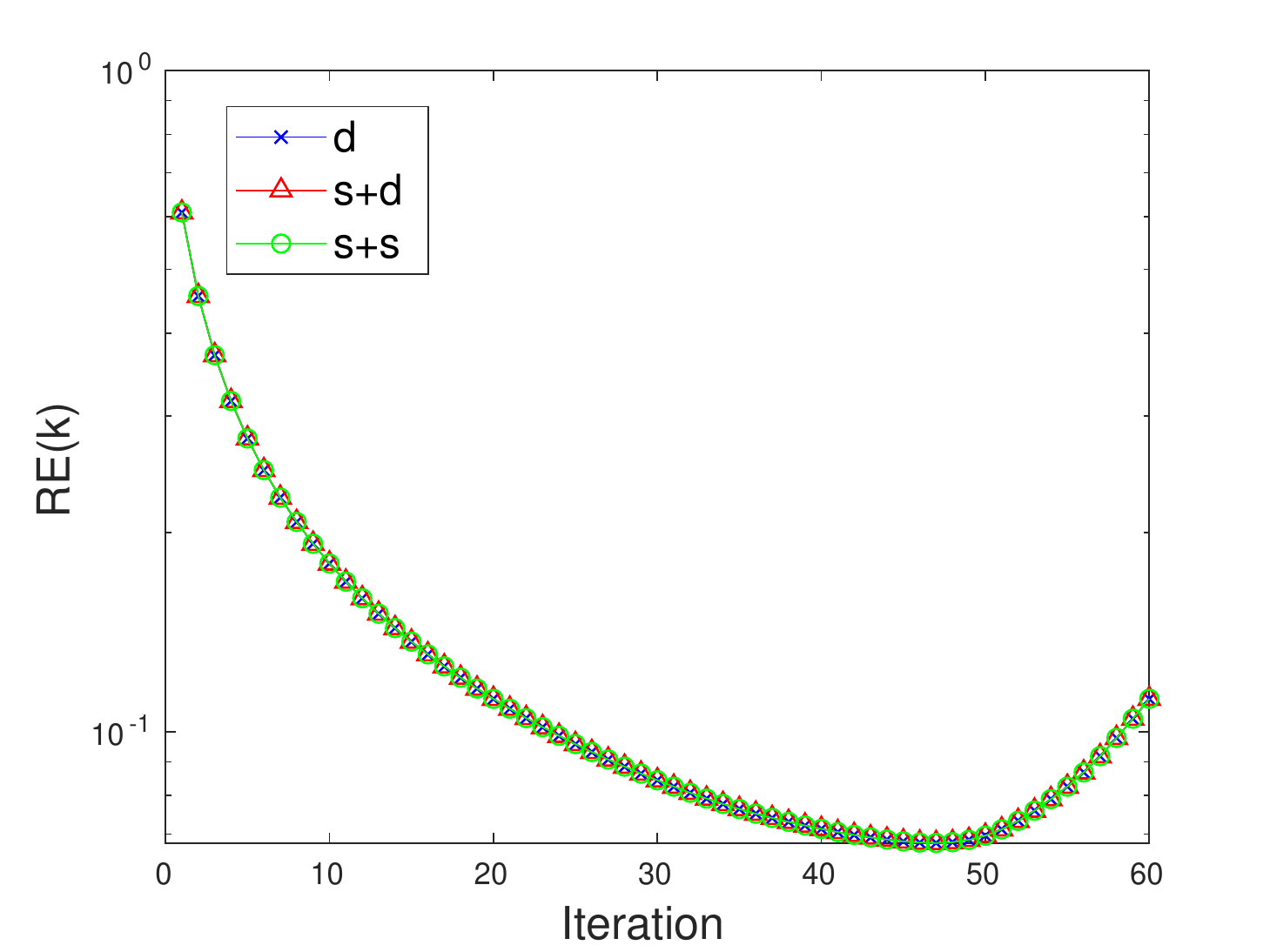}}
		\centerline{(b) {\sf deriv2}}
	\end{minipage}
	\vfill
	\begin{minipage}{0.48\linewidth}
		\centerline{\includegraphics[width=5cm,height=3.5cm]{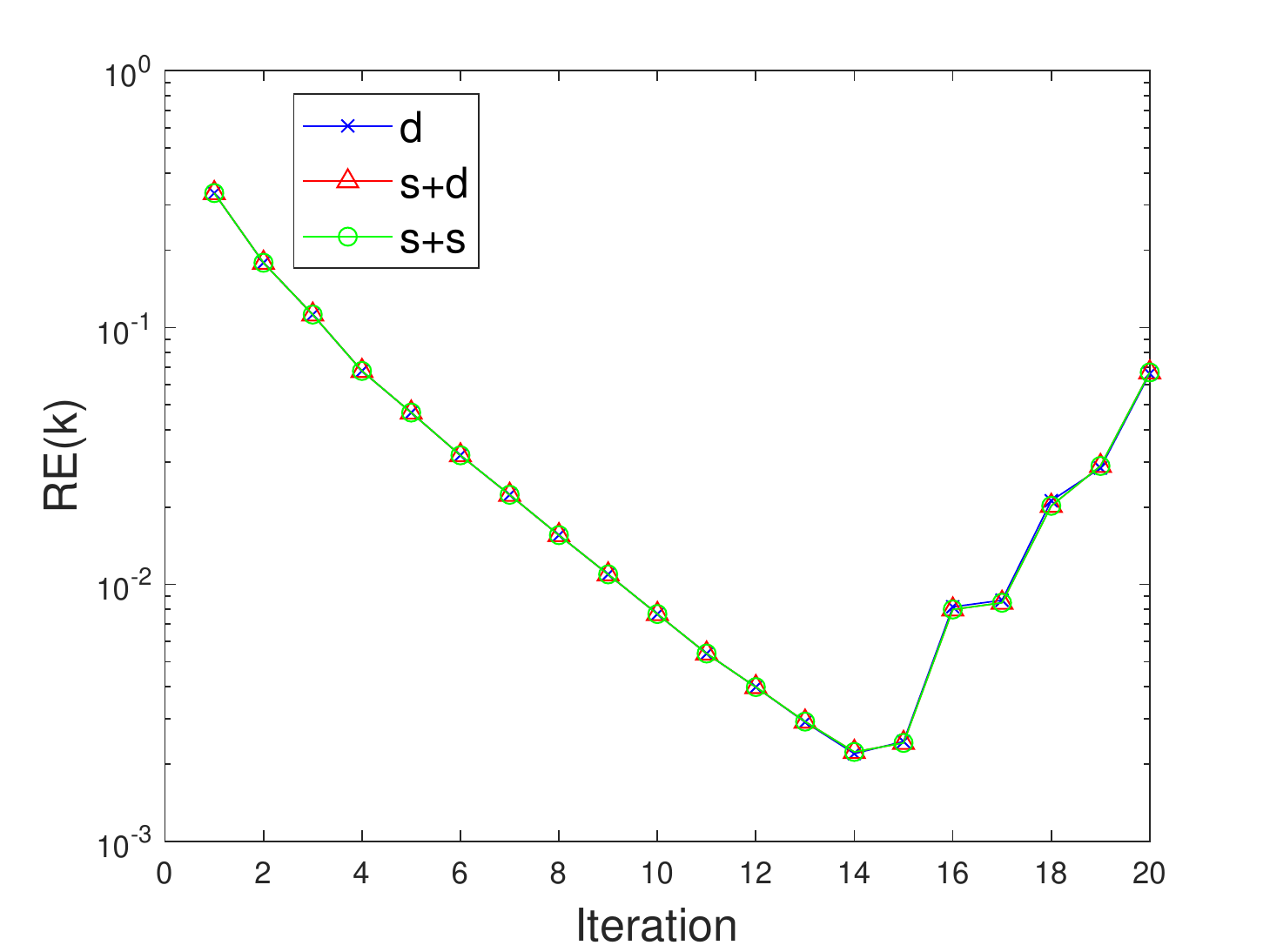}}
		\centerline{(c) {\sf gravity}}
	\end{minipage}
	\hfill
	\begin{minipage}{0.48\linewidth}
		\centerline{\includegraphics[width=5cm,height=3.5cm]{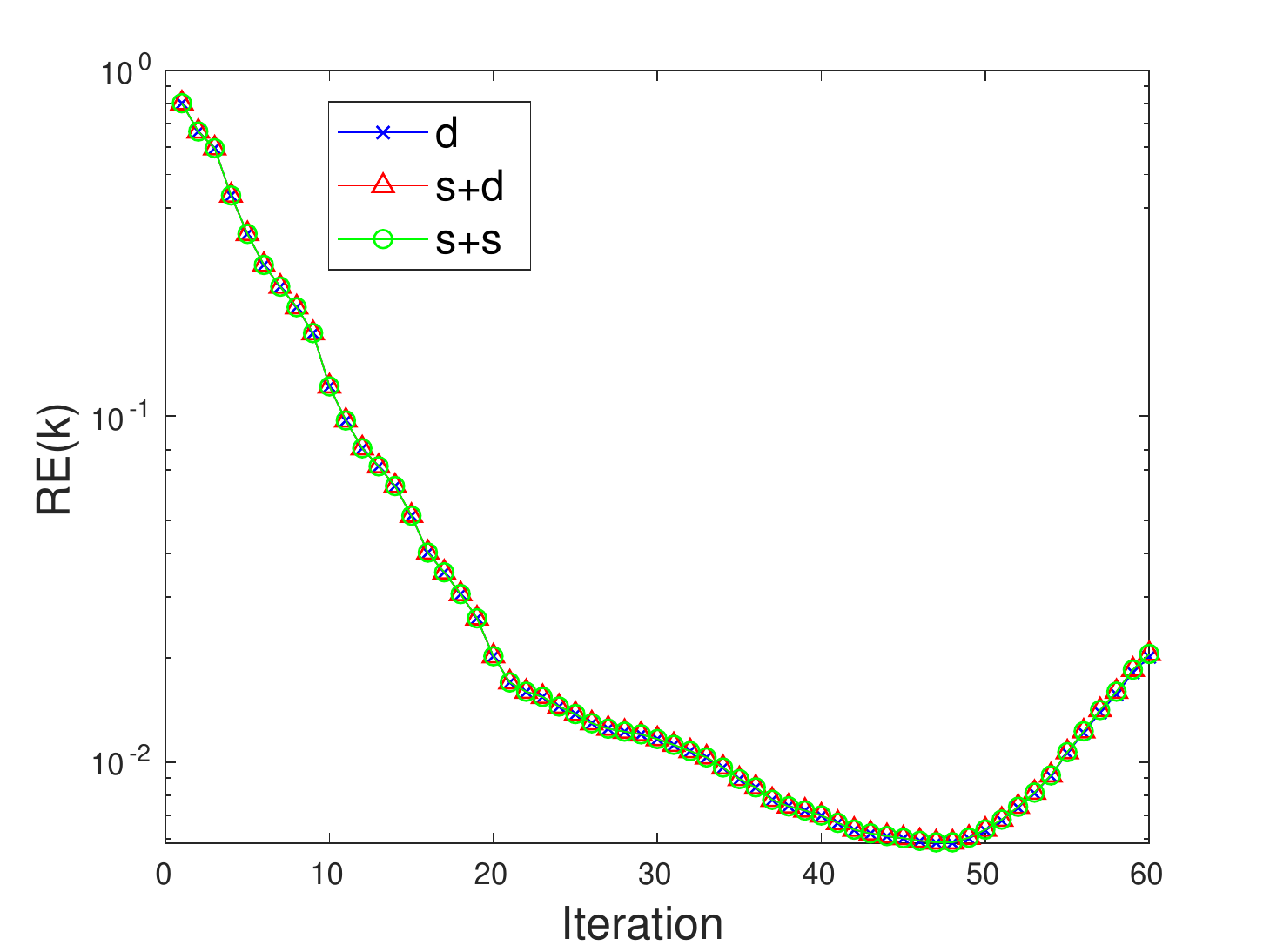}}
		\centerline{(d) {\sf heat}}
	\end{minipage}
	\caption{Semi-convergence curves for LSQR implemented using different computing precisions, $\varepsilon=10^{-5}$.}
	\label{fig5.4}
\end{figure}

\begin{figure}[htp]
	\begin{minipage}{0.48\linewidth}
		\centerline{\includegraphics[width=5cm,height=3.5cm]{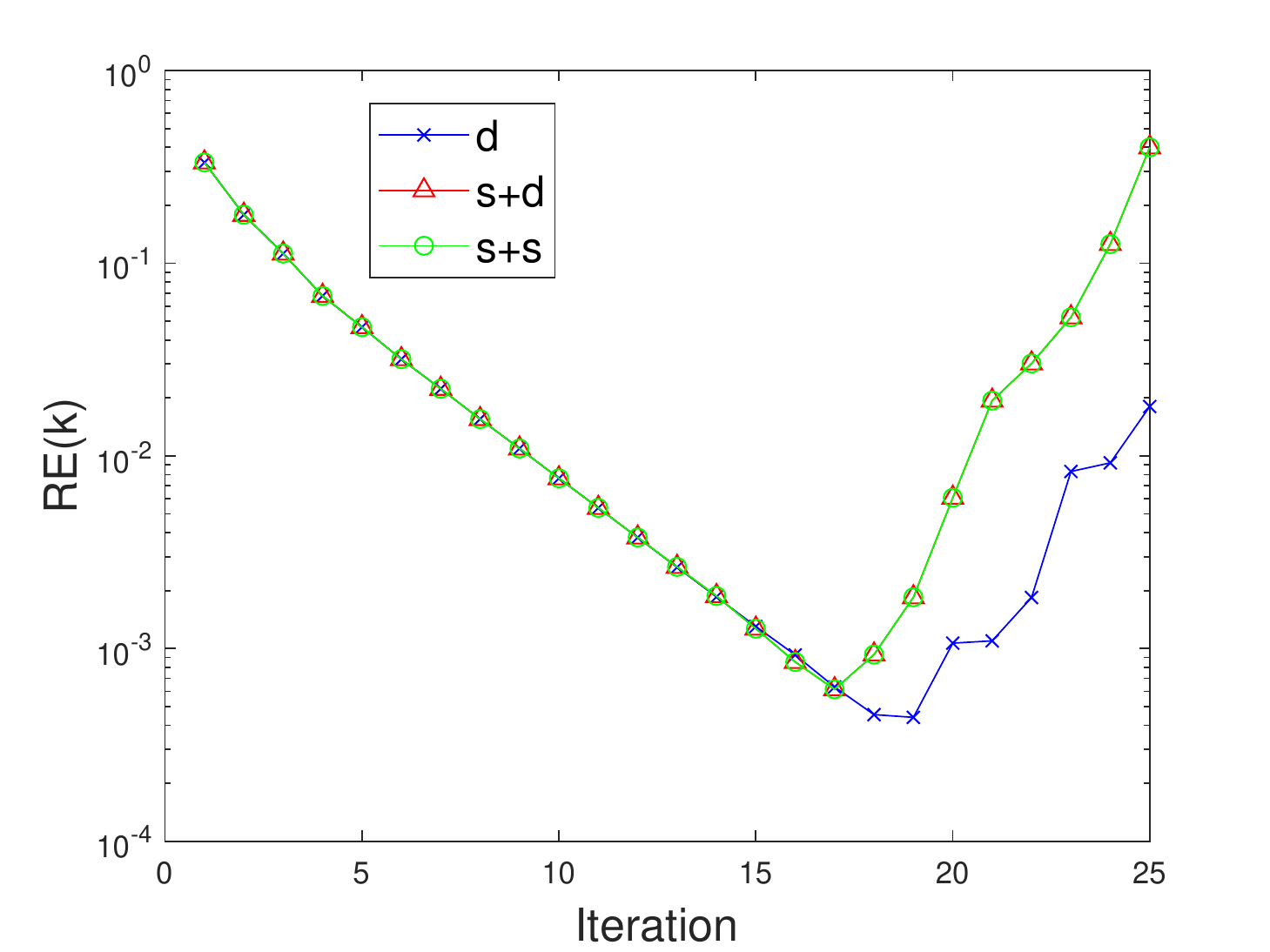}}
		\centerline{(a) {\sf gravity}}
	\end{minipage}
	\hfill
	\begin{minipage}{0.48\linewidth}
		\centerline{\includegraphics[width=5cm,height=3.5cm]{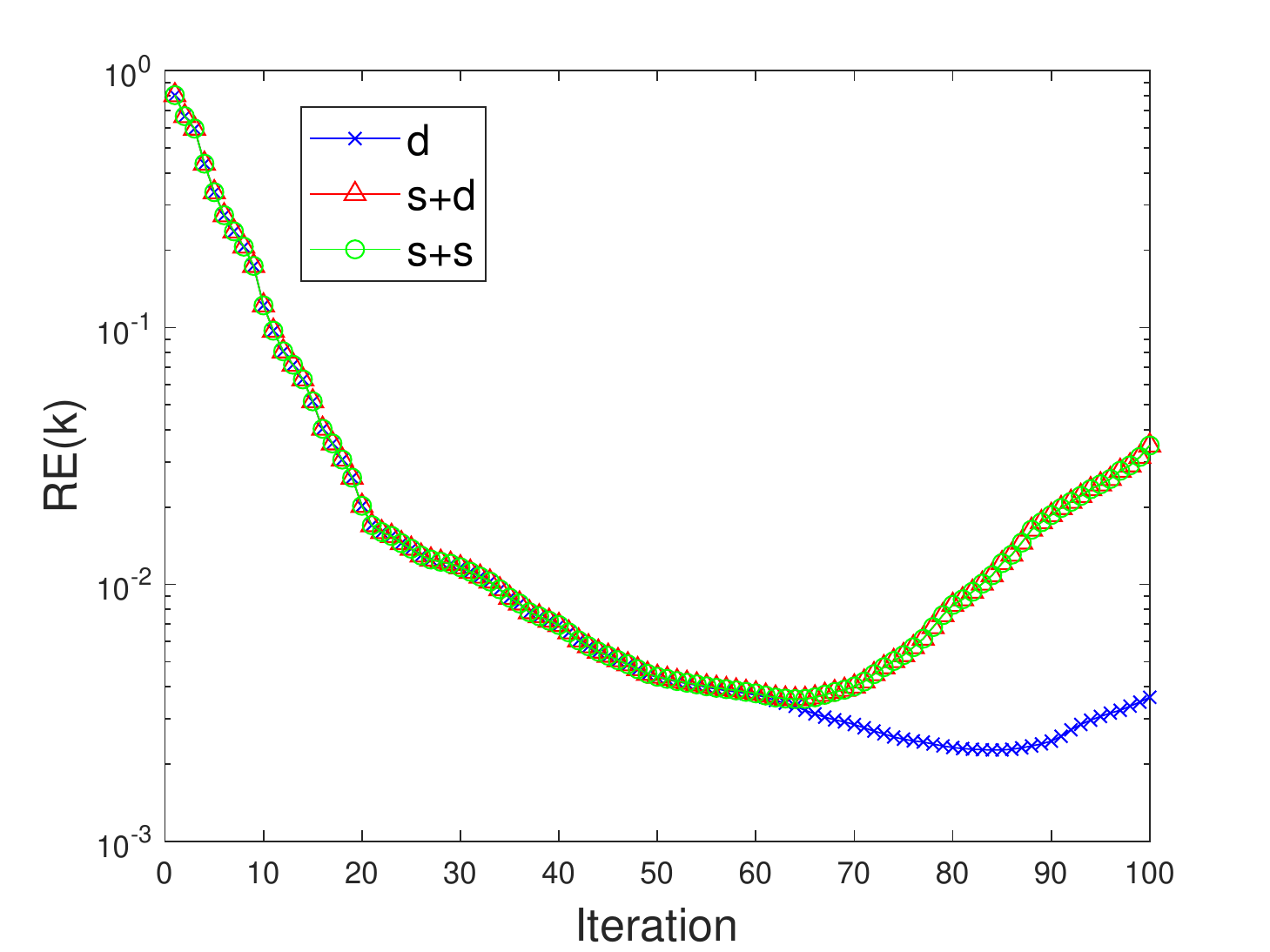}}
		\centerline{(b) {\sf heat}}
	\end{minipage}
	\caption{Semi-convergence curves for LSQR implemented using different computing precisions, $\varepsilon=10^{-7}$.}
	\label{fig5.5}
\end{figure}

From the above experimental results, we can make sure that the two mixed precision variants ``s+d" and ``s+s" can compute regularized solutions with the same accuracy as the double precision LSQR when $\varepsilon=10^{-3}$. We have also made numerical experiments for $\varepsilon=10^{-4}, 10^{-5}$ and get similar results. Here we only show the semi-convergence curves for $\varepsilon=10^{-5}$ in Figure \ref{fig5.4}. For this noise level, single precision computing of the LBFRO and updating $x_k$ is also enough for LSQR for solving the four test problems. Note from subfigure (a) that the relative errors for ``s+d" and ``s+s" quickly become bigger than that for ``d" after semi-convergence. This reminds us that for {\sf shaw}, if $\varepsilon$ is smaller than $10^{-5}$, simply implementing the LBFRO with single precision will lead to a loss of accuracy of regularized solutions. For extremely small noise level $\varepsilon=10^{-7}$, we plot the semi-convergence curves for {\sf gravity} and {\sf heat} in Figure \ref{fig5.5}. We can clearly find that neither ``s+d" nor ``s+s" can compute a regularized solution with accuracy as good as the best one achieved by ``d". In real applications, the noise levels are seldom so small, and it is almost always possible to implement LSQR using lower precisions to compute a regularized solution without sacrificing accuracy.

\subsection{Two dimensional case}\label{subsec5.2}
We generate two image deblurring problems using codes from \cite{Gazzola2019} for testing two dimensional linear ill-posed problems, with the goal to restore an image from a blurred and noisy one $b=Ax_{ex}+e$, where $x_{ex}$ denotes the true image and $A$ denotes the blurring operator. For the background of image deblurring, we refer the readers to \cite{Hansen2006}. For {\sf PRblurspeckle}, which simulates spatially invariant blurring caused by atmospheric turbulence, we use the true image ``Hubble Space Telescope" with image size of $N = 128$ (i.e., the true and blurred images have $128\times 128$ pixels), and the blur level is set to be medium.  For {\sf PRblurdefocus}, which simulates a spatially invariant, out-of-focus blur, we use the true image ``Cameraman" with image size of $N = 256$, and the blur level is set to be severe. Zero boundary condition is used for both the two blurs to construct $A$. The two true images are shown in Figure \ref{fig5.6}.

\begin{figure}[htp]
	\begin{minipage}{0.48\linewidth}
		\centerline{\includegraphics[width=4.0cm,height=3.0cm]{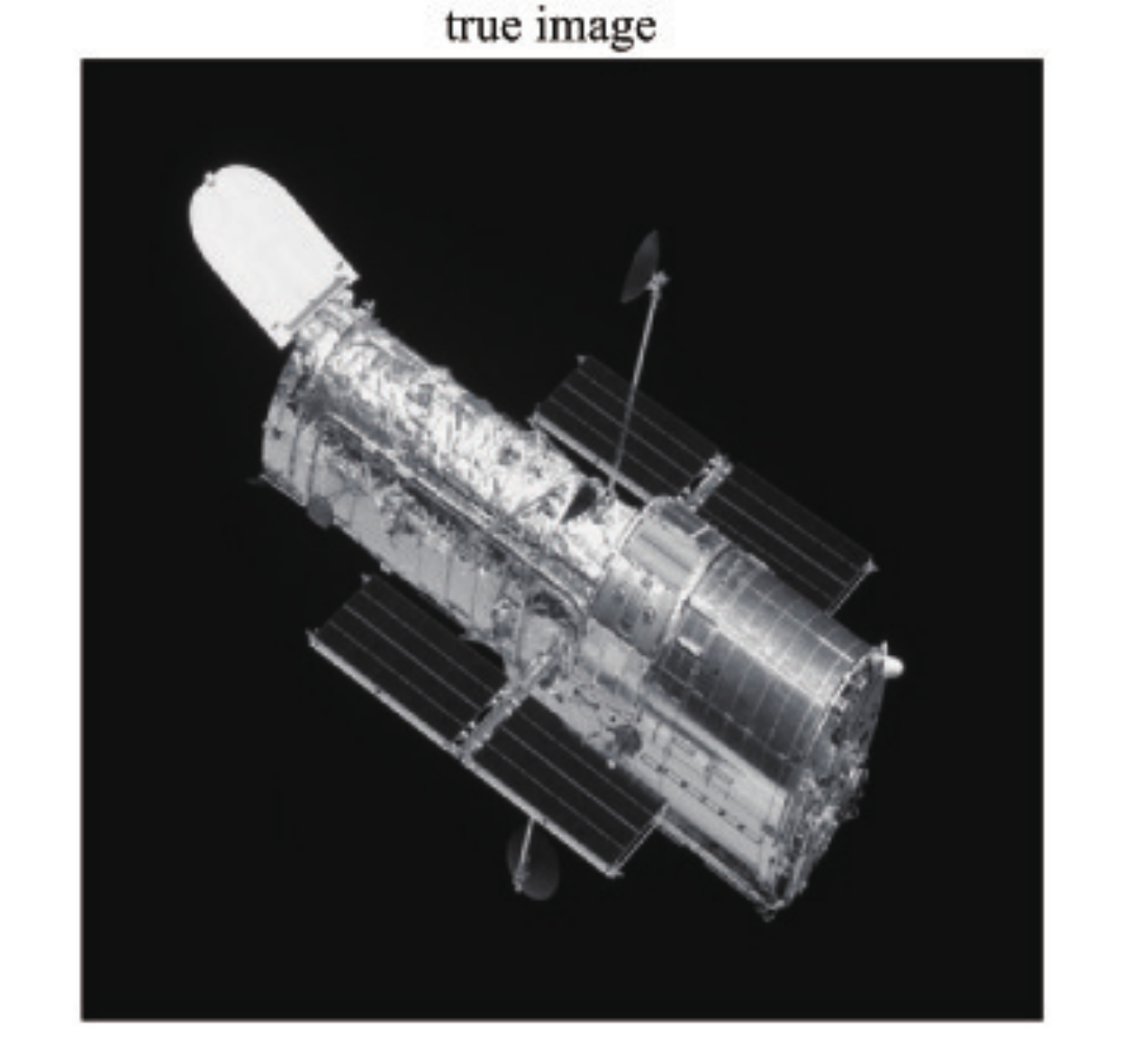}}
		\centerline{(a) Hubble Space Telescope}
	\end{minipage}
	\hfill
	\begin{minipage}{0.48\linewidth}
		\centerline{\includegraphics[width=4.0cm,height=3.0cm]{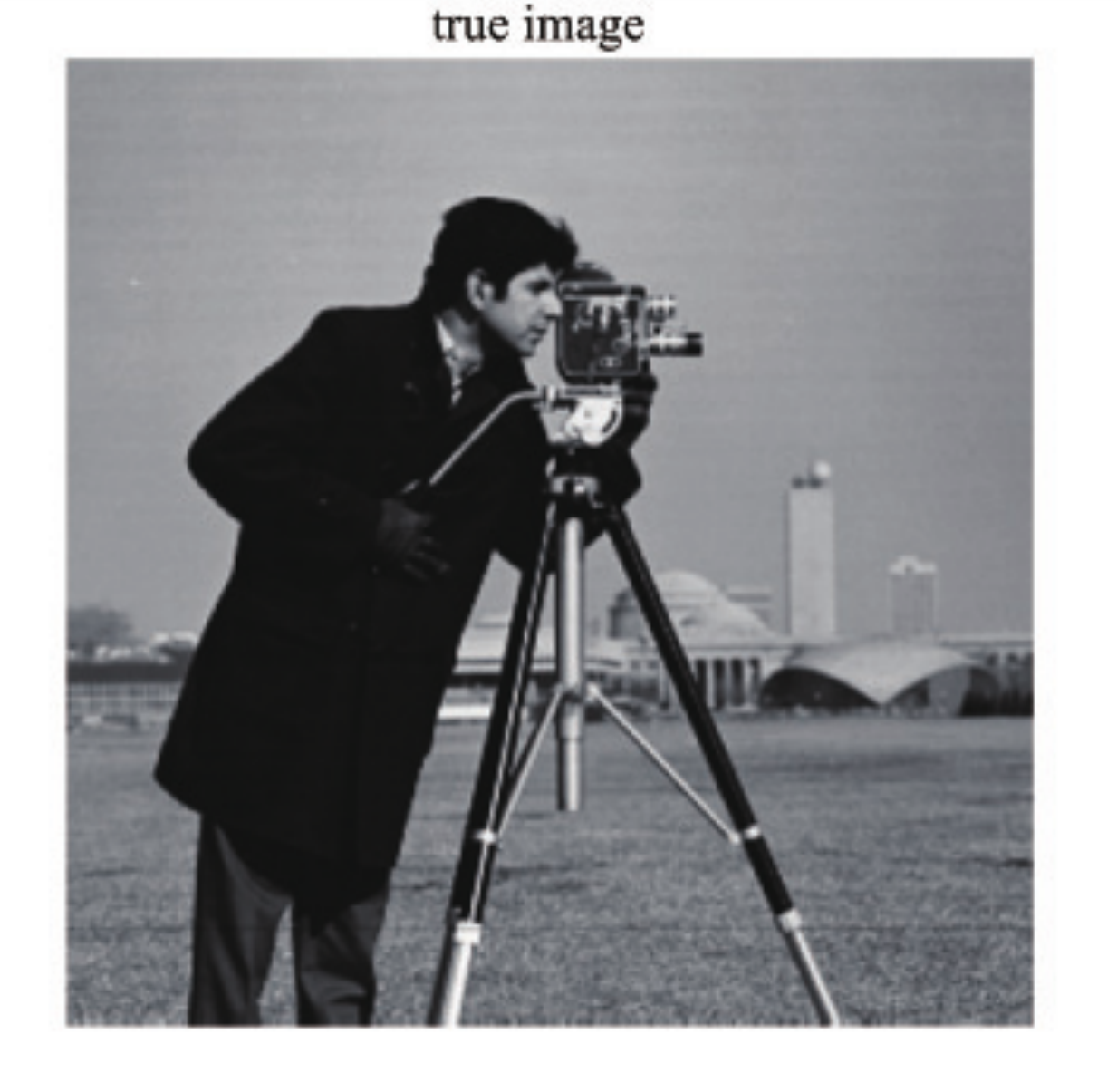}}
		\centerline{(b) Cameraman}
	\end{minipage}
	\caption{True images for testing {\sf PRblurspeckle} and {\sf PRblurdefocus}.}
	\label{fig5.6}
\end{figure}

For {\sf PRblurspeckle} with image ``Hubble Space Telescope", the noise levels are set as $\varepsilon=10^{-2}$ and $\varepsilon=10^{-3}$. Figure \ref{fig5.7} depicts the semi-convergence curves for ``d", ``s+d" and ``s+s" as well as relative error curves of $x_k$ computed by ``s+d" and ``s+s" with respect to that by ``d". For both the two noise levels, we find that the curves of $\mathrm{RE}(k)$ coincide until many steps after semi-convergence and the three semi-convergence points are the same. The relative errors of $x_k$/$\hat{x}_k$ computed by ``s+d"/``s+s" with respect to that by ``d" are shown in subfigures (c) and (d), and they are much smaller than $\mathrm{RE}(k)$ of ``d" until many steps after semi-convergence. The numerical results confirm that for {\sf PRblurspeckle}, the LSQR can be implemented using lower precisions without sacrificing accuracy of regularized solutions for $\varepsilon=10^{-2}$ or $\varepsilon=10^{-3}$. Figure \ref{fig5.8} shows the blurred images and corresponding restored ones for the two noise levels, where the restored images are obtained from the best regularized solution at the semi-convergence point ($k_0$ and $\mathrm{RE}(k_0)$ are the same for ``d", ``s+d" and ``s+s", and we choose $\hat{x}_{k_0}$ computed by ``s+s"). The result shows a good deblurring effect of LSQR implemented using single precision. For noise levels smaller than $10^{-4}$, we find that the semi-convergence behavior does not appear, which means that noise amplification is tolerable even without regularization, and thus we need not test for smaller noise cases. 

\begin{figure}[htp]
	\begin{minipage}{0.45\linewidth}
		\centerline{\includegraphics[width=5cm,height=3.5cm]{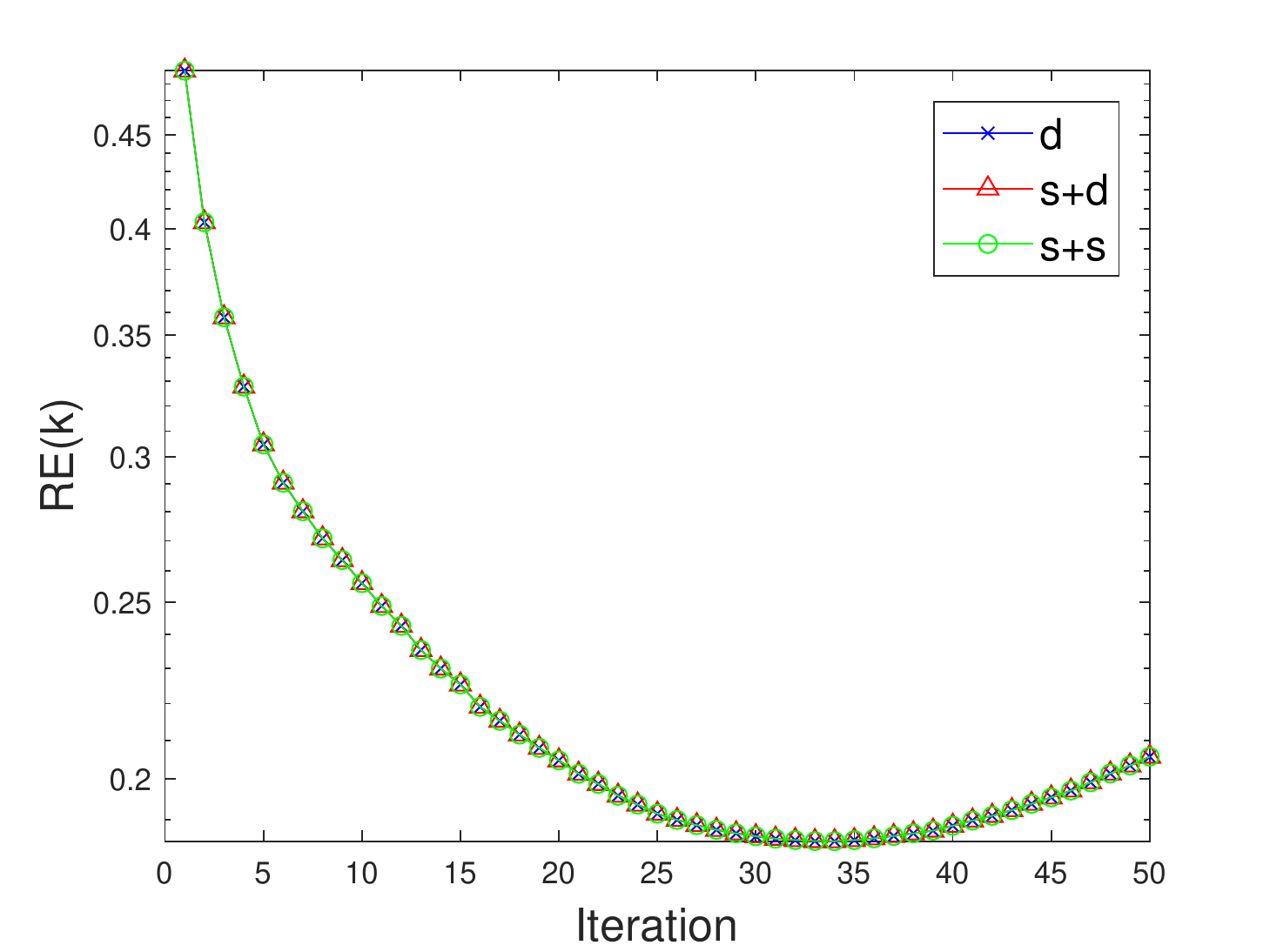}}
		\centerline{(a) $\varepsilon=10^{-2}$}
	\end{minipage}
	\hfill
	\begin{minipage}{0.45\linewidth}
		\centerline{\includegraphics[width=5cm,height=3.5cm]{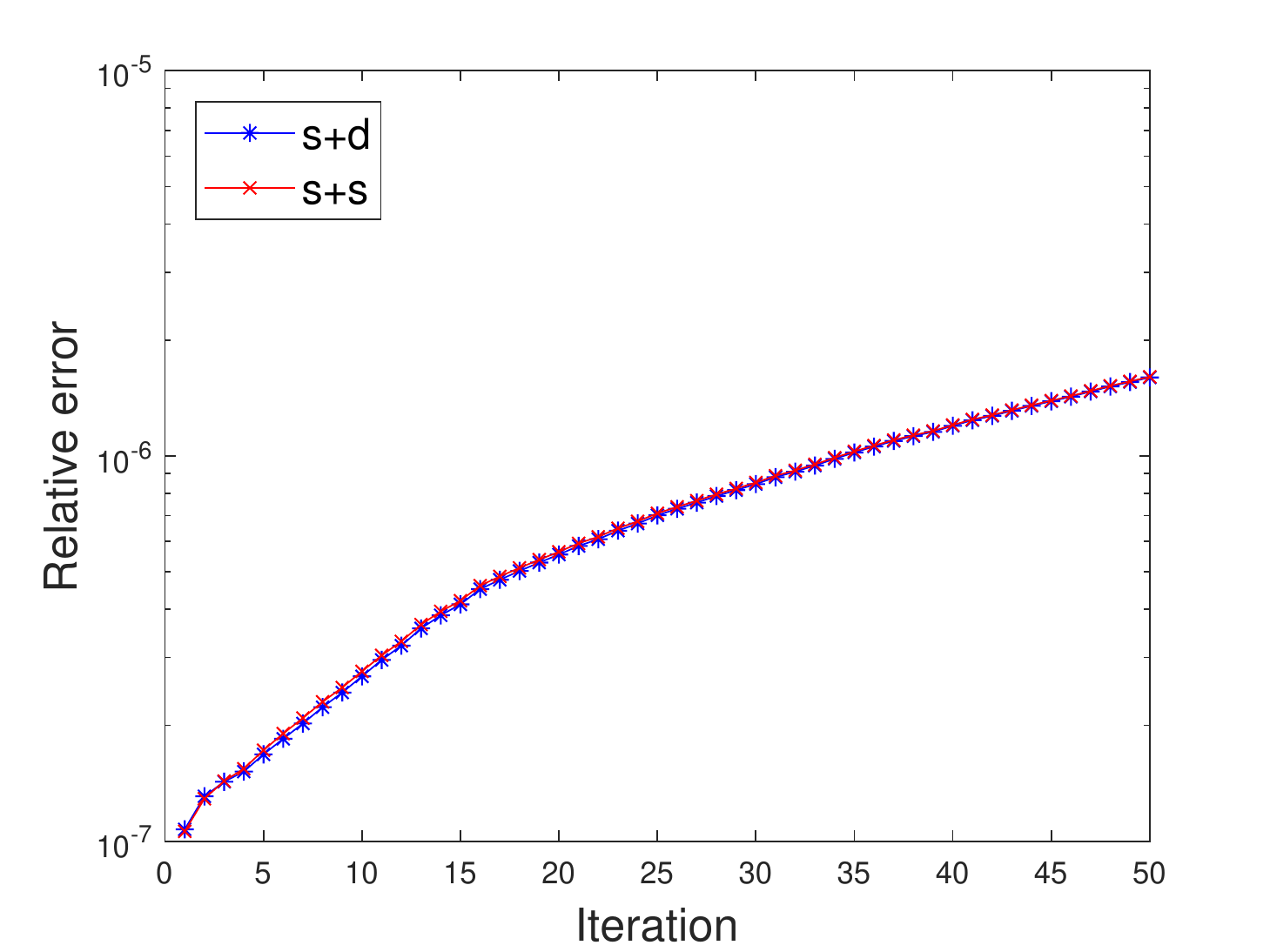}}
		\centerline{(b) $\varepsilon=10^{-2}$}
	\end{minipage}
	\vfill
	\begin{minipage}{0.45\linewidth}
		\centerline{\includegraphics[width=5cm,height=3.5cm]{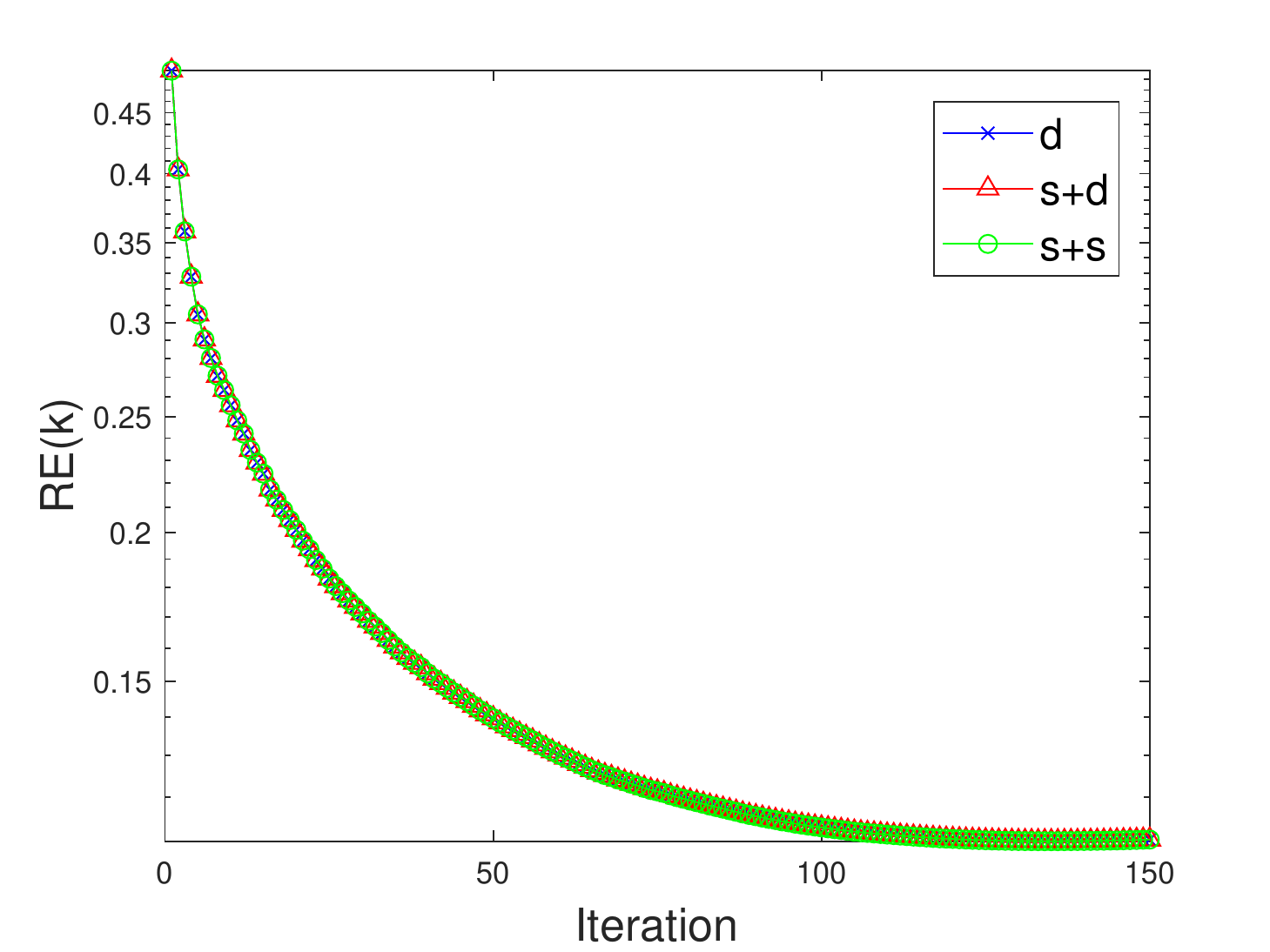}}
		\centerline{(c) $\varepsilon=10^{-3}$}
	\end{minipage}
	\hfill
	\begin{minipage}{0.45\linewidth}
		\centerline{\includegraphics[width=5cm,height=3.5cm]{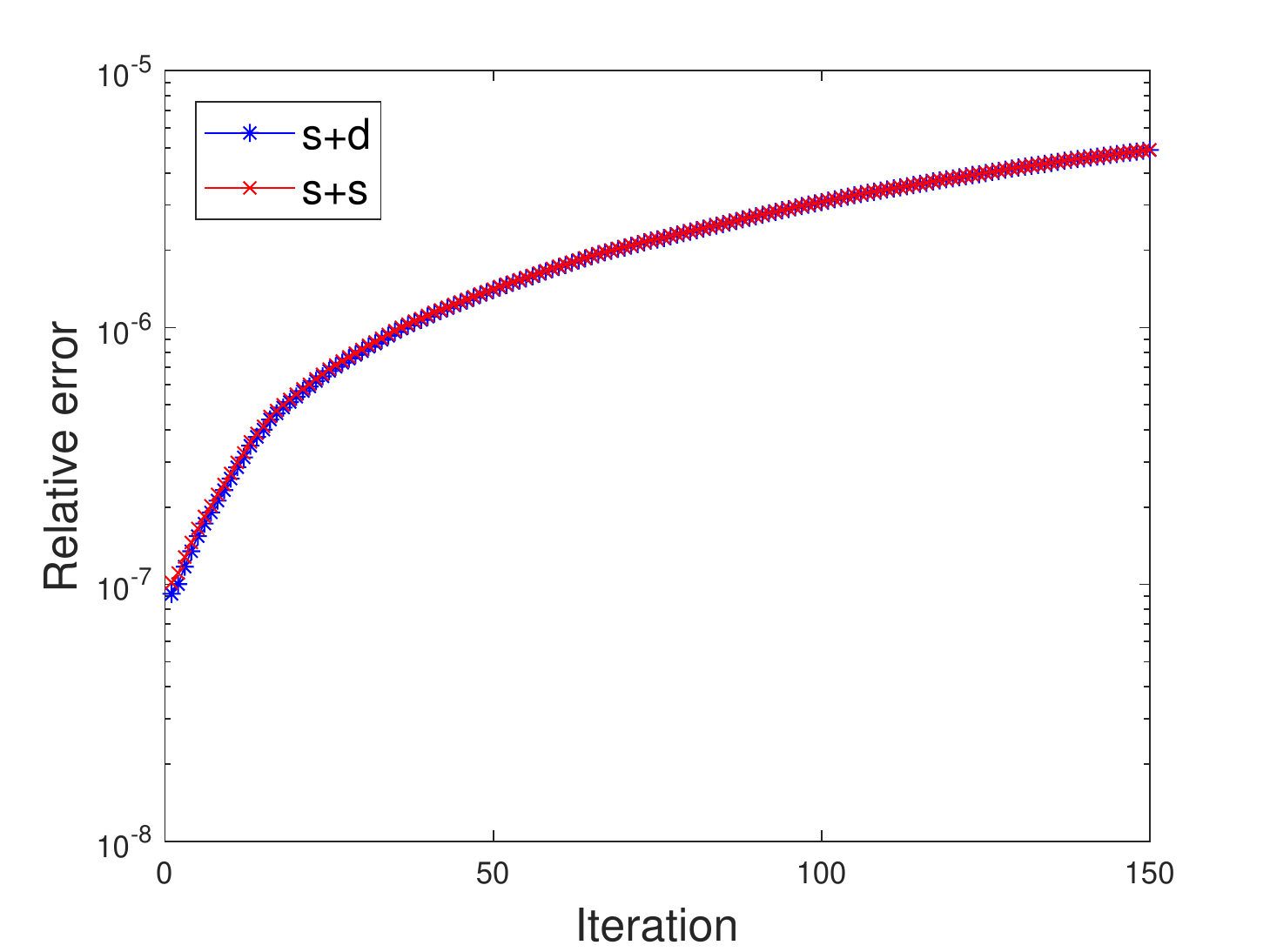}}
		\centerline{(d) $\varepsilon=10^{-3}$}
	\end{minipage}
	\caption{Semi-convergence curves for LSQR implemented using different computing precisions, and relative errors of regularized solutions computed by ``s+d"/``s+s" with respect to that by ``d", {\sf PRblurspeckle}.}
	\label{fig5.7}
\end{figure}

\begin{figure}[htp]
	\begin{minipage}{0.48\linewidth}
		\centerline{\includegraphics[width=4cm,height=3.0cm]{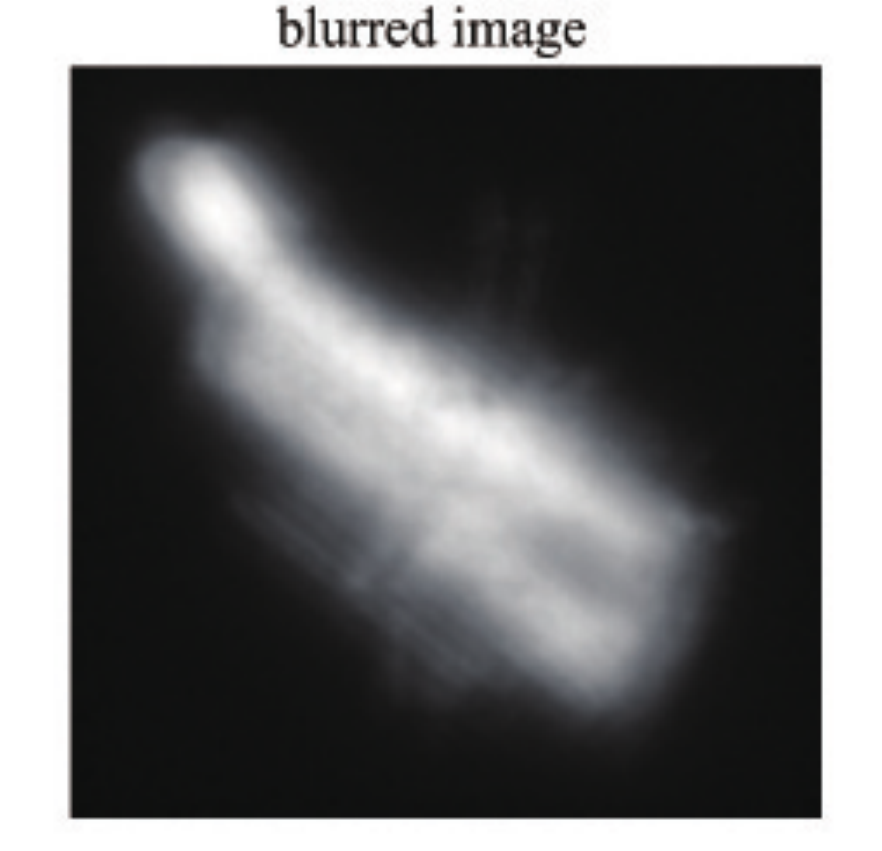}}
		\centerline{(a) $\varepsilon=10^{-2}$}
	\end{minipage}
	\hfill
	\begin{minipage}{0.48\linewidth}
		\centerline{\includegraphics[width=4cm,height=3.0cm]{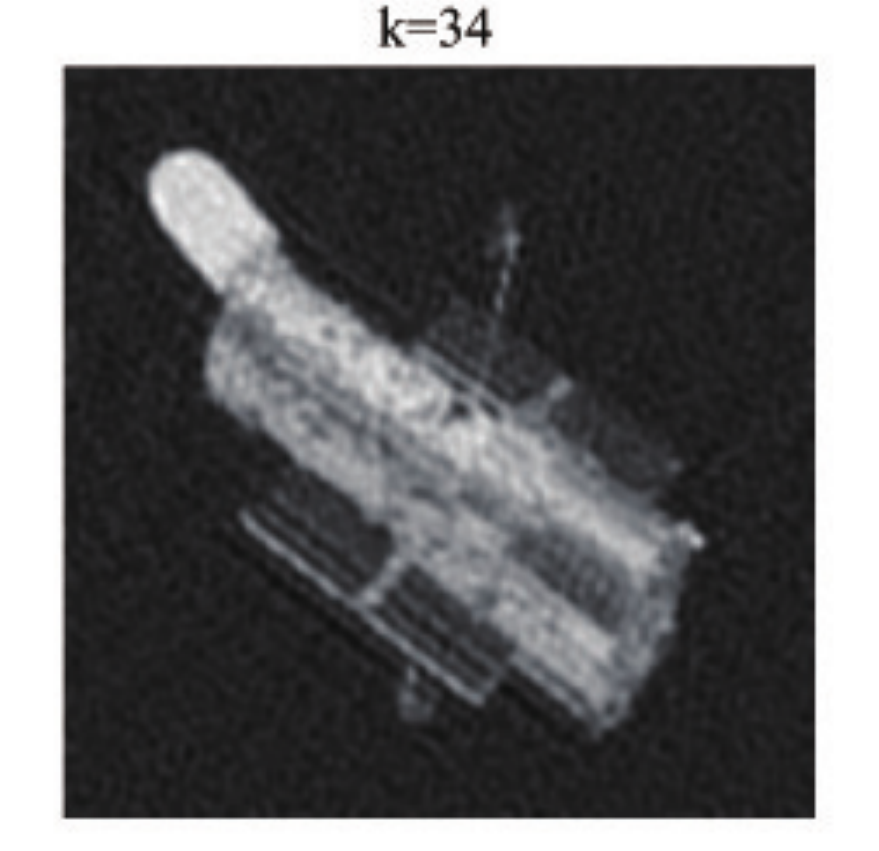}}
		\centerline{(b) $\varepsilon=10^{-2}$}
	\end{minipage}
	\vfill
	\begin{minipage}{0.48\linewidth}
		\centerline{\includegraphics[width=4cm,height=3.0cm]{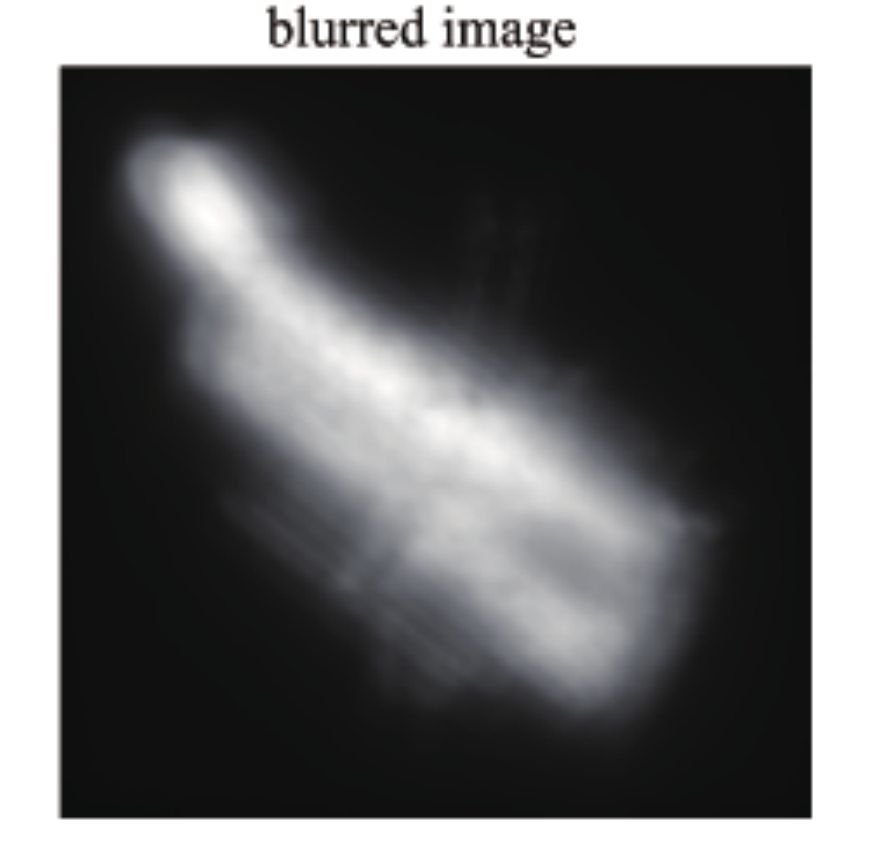}}
		\centerline{(c) $\varepsilon=10^{-3}$}
	\end{minipage}
	\hfill
	\begin{minipage}{0.48\linewidth}
		\centerline{\includegraphics[width=4cm,height=3.0cm]{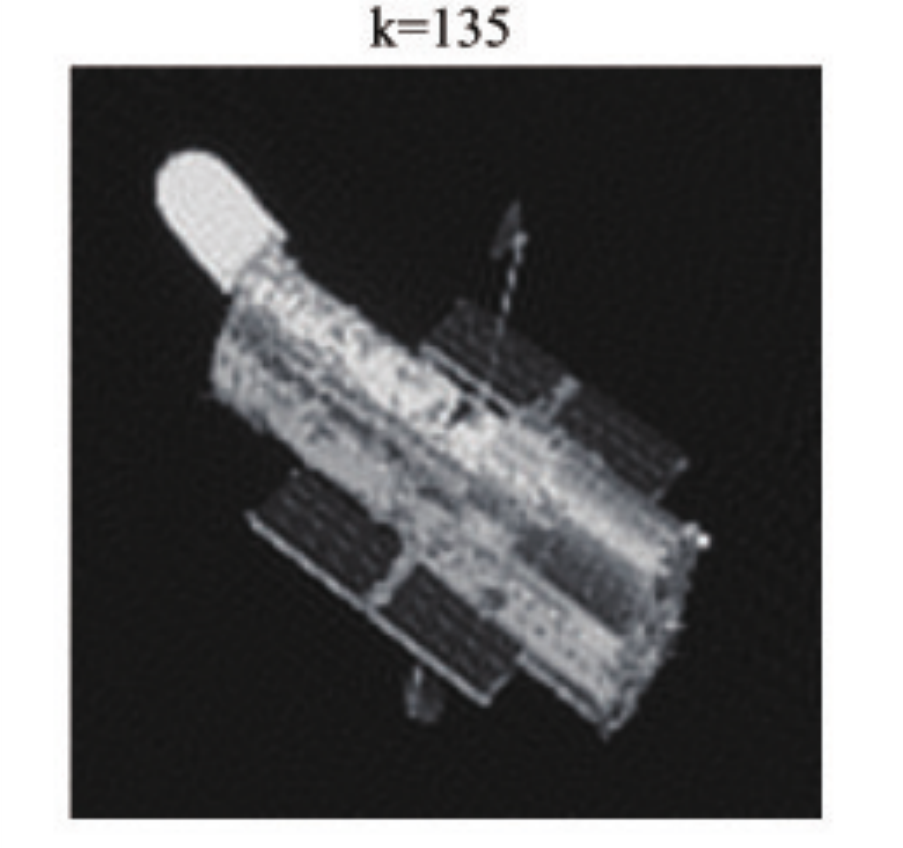}}
		\centerline{(d) $\varepsilon=10^{-3}$}
	\end{minipage}
	\caption{Images ``Hubble Space Telescope" blurred by {\sf PRblurspeckle} and restored by the mixed precision LSQR ``s+s'' at the optimal iteration: (a), (b) $\varepsilon=10^{-2}$; (c), (d) $\varepsilon=10^{-3}$.}
	\label{fig5.8}
\end{figure}

For {\sf PRblurdefocus} with image ``Cameraman", the noise levels are set as $\varepsilon=10^{-3}$ and $\varepsilon=10^{-4}$. For noise levels smaller than $10^{-5}$, the semi-convergence behavior will not appear and we need not test for those cases. Figure \ref{fig5.9} and Figure \ref{fig5.10} show the relative error curves and blurred and restored images. Subfigures (b) and (d) of Figure \ref{fig5.9} depict the relative errors between regularized solutions computed by ``s+d" and ``s+s" with upper bounds $\kappa(\widehat{R}_k)\mathbf{\bar{u}}$. We can find that $\kappa(\widehat{R}_k)$ for the two noise levels grow very slightly, which lead to the upper bounds much smaller than $\mathrm{RE}(k)$. Therefore, the regularized solutions computed by ``s+d" and ``s+s" have the same accuracy, which can clearly observed from subfigures (a) and (c). The other experimental results are similar to those of {\sf PRblurdefocus} and we do not illustrate them in detail any longer. 

\begin{figure}[htp]
	\begin{minipage}{0.45\linewidth}
		\centerline{\includegraphics[width=5.0cm,height=3.5cm]{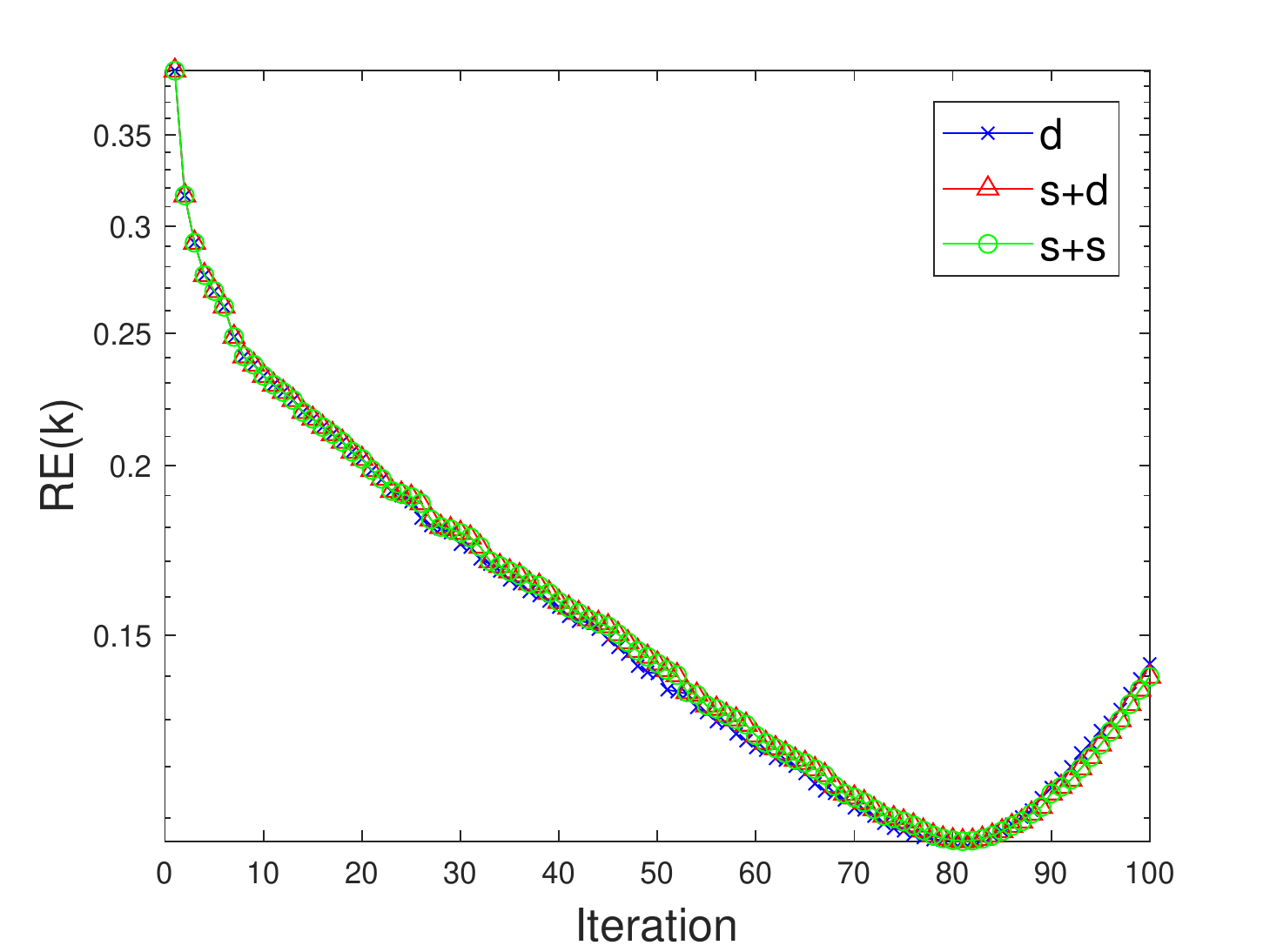}}
		\centerline{(a) $\varepsilon=10^{-3}$}
	\end{minipage}
	\hfill
	\begin{minipage}{0.45\linewidth}
		\centerline{\includegraphics[width=5.0cm,height=3.5cm]{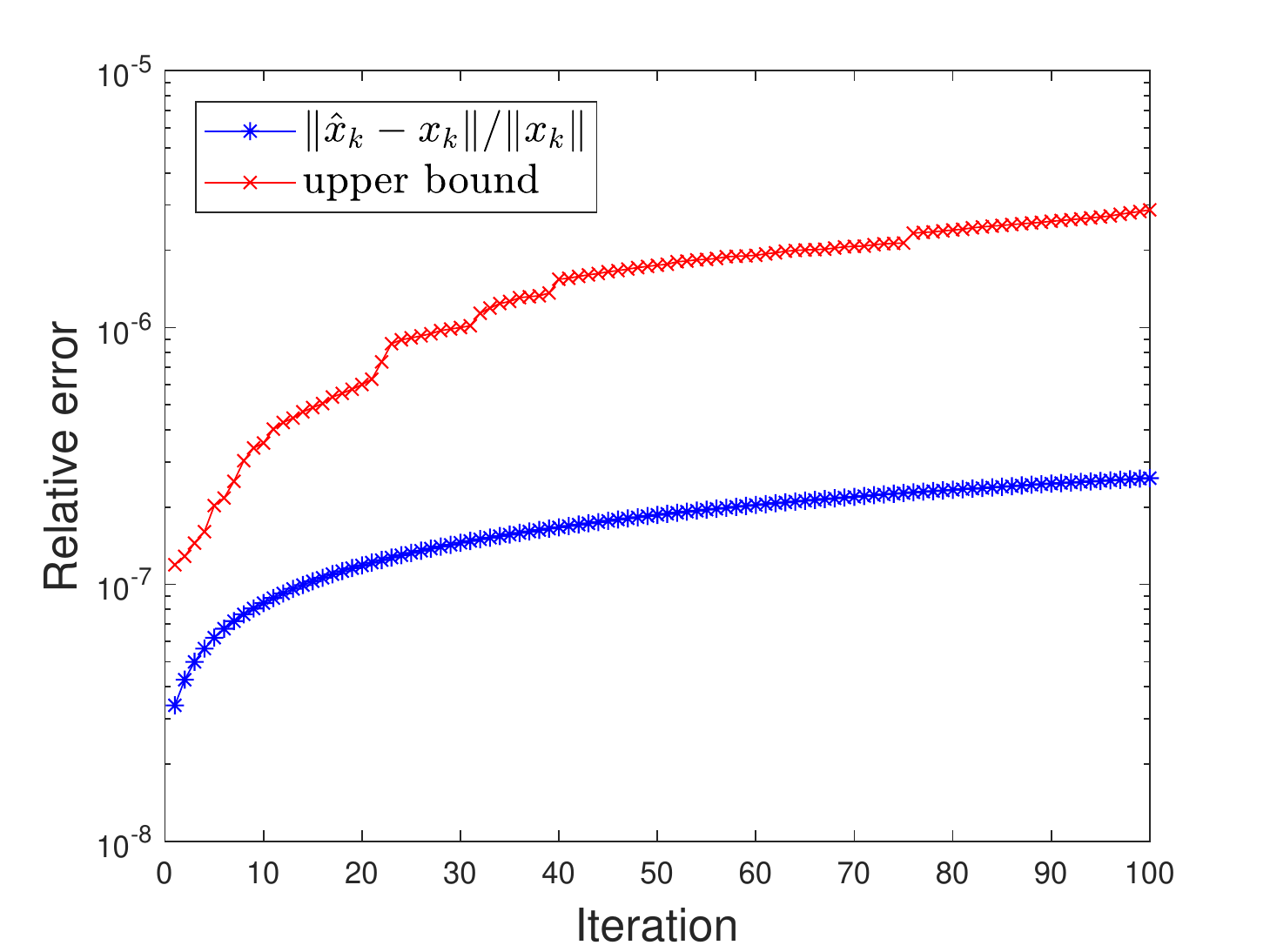}}
		\centerline{(b) $\varepsilon=10^{-3}$}
	\end{minipage}
	\vfill
	\begin{minipage}{0.45\linewidth}
		\centerline{\includegraphics[width=5.0cm,height=3.5cm]{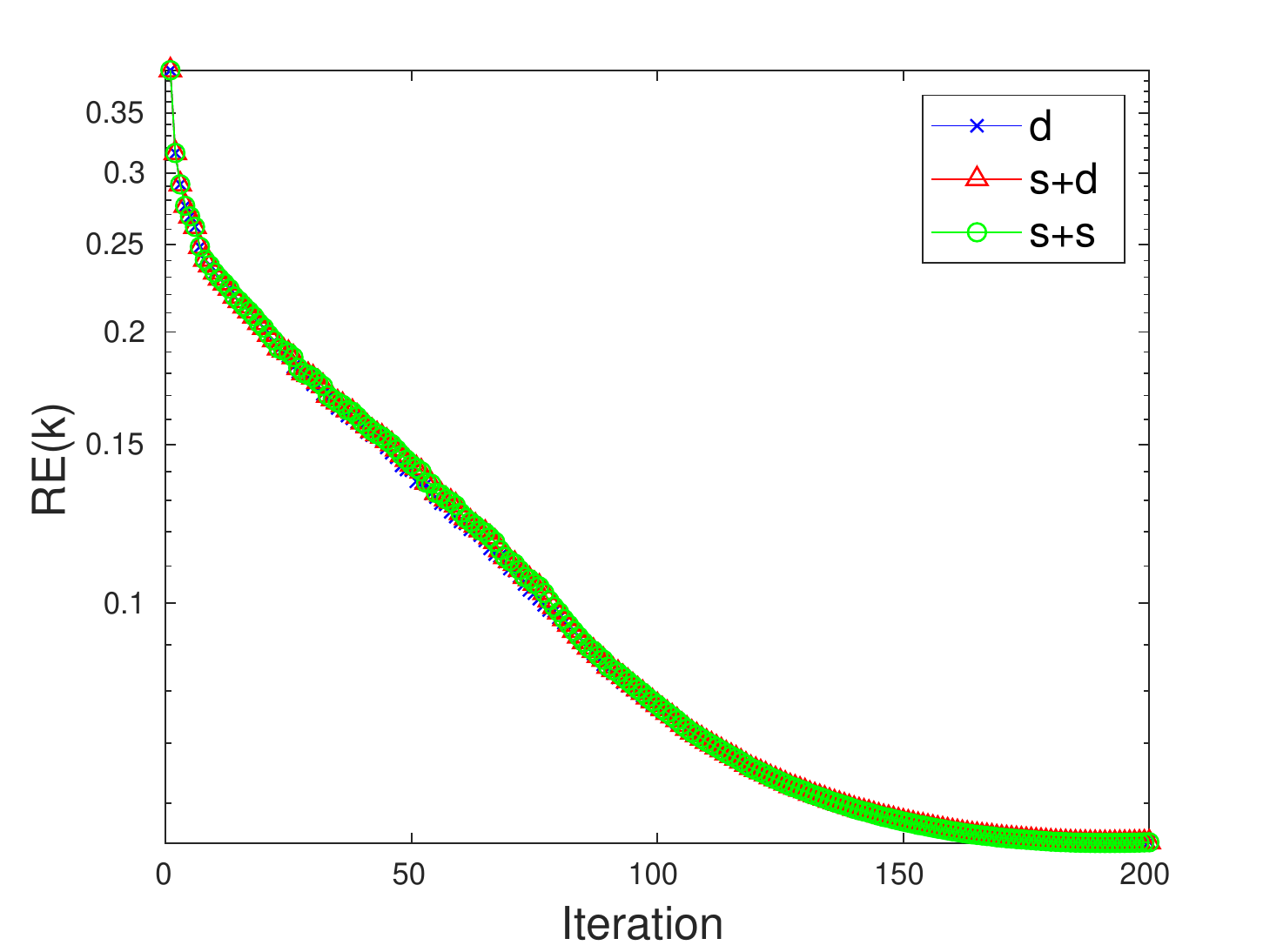}}
		\centerline{(c) $\varepsilon=10^{-4}$}
	\end{minipage}
	\hfill
	\begin{minipage}{0.45\linewidth}
		\centerline{\includegraphics[width=5.0cm,height=3.5cm]{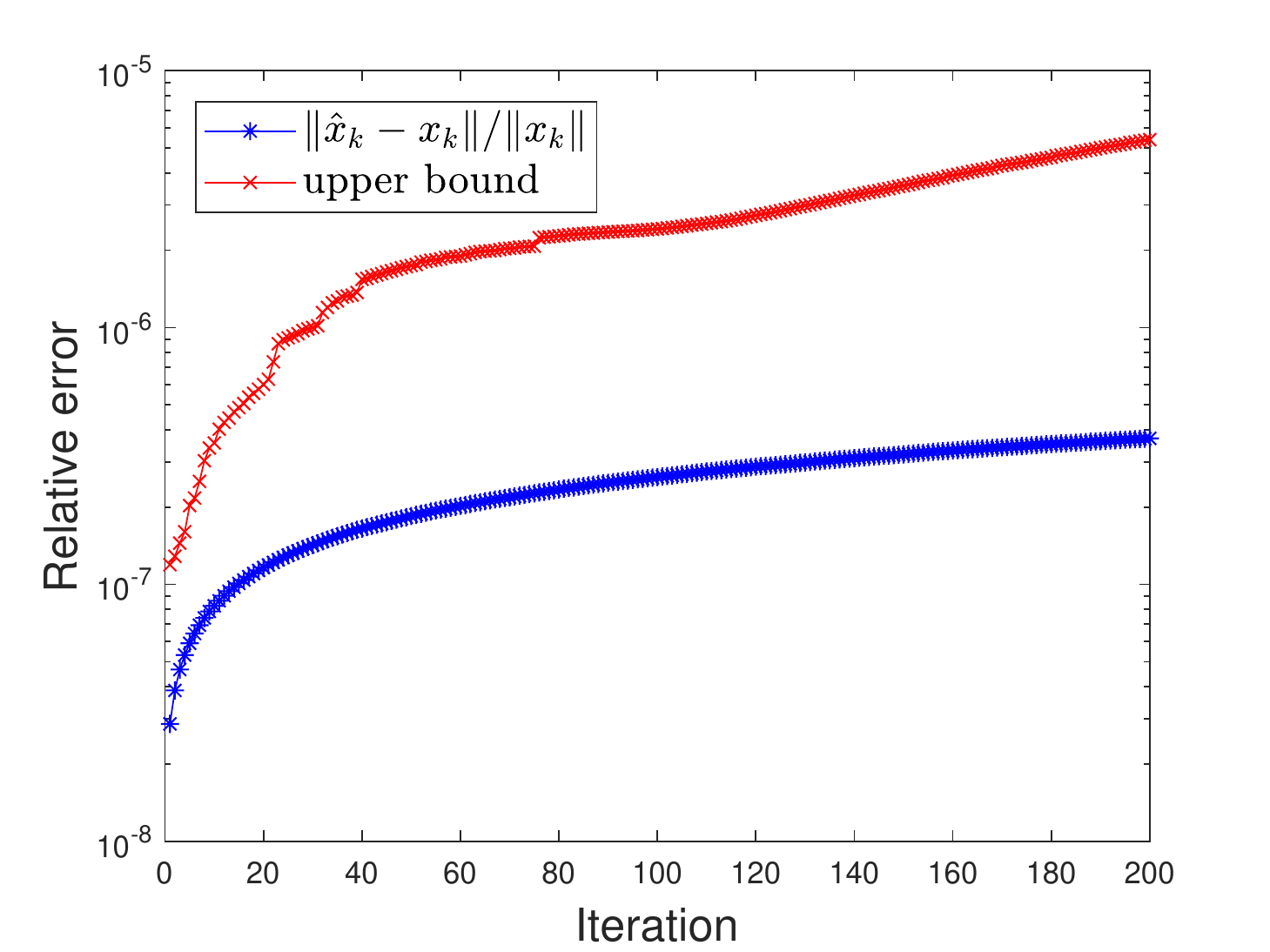}}
		\centerline{(d) $\varepsilon=10^{-4}$}
	\end{minipage}
	\caption{Semi-convergence curves for LSQR implemented using different computing precisions, and relative errors between regularized solutions computed by ``s+d" and ``s+s", {\sf PRblurdefocus}.}
	\label{fig5.9}
\end{figure}

\begin{figure}[htbp]
	\begin{minipage}{0.48\linewidth}
		\centerline{\includegraphics[width=4cm,height=3.0cm]{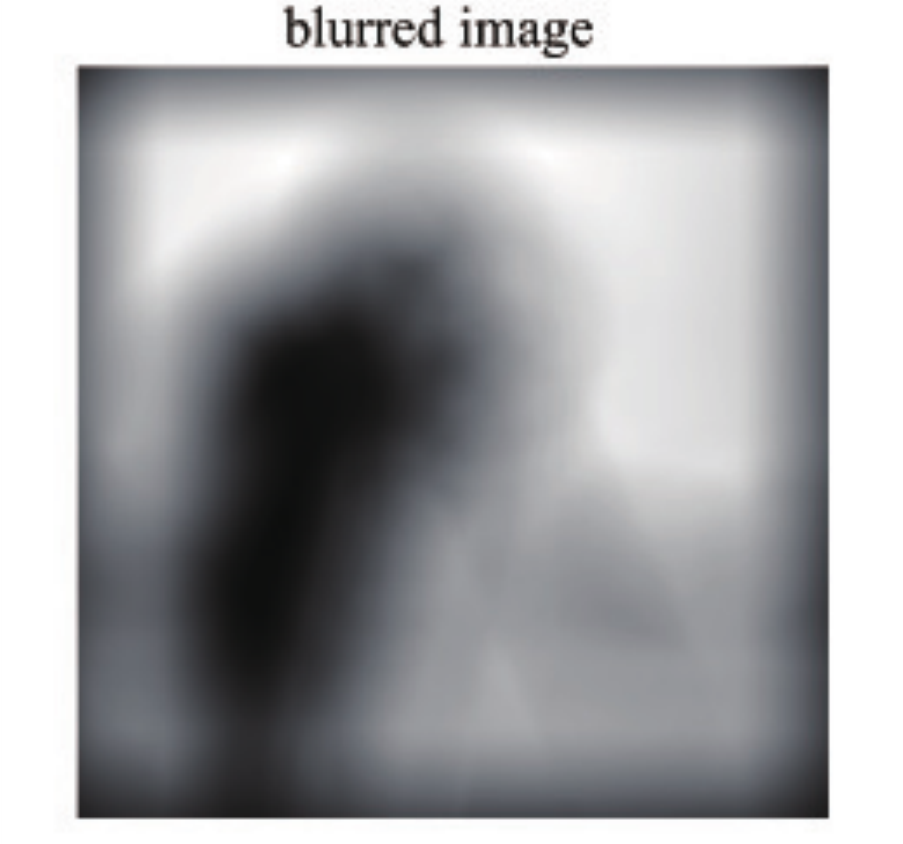}}
		\centerline{(a) $\varepsilon=10^{-3}$}
	\end{minipage}
	\hfill
	\begin{minipage}{0.48\linewidth}
		\centerline{\includegraphics[width=4cm,height=3.0cm]{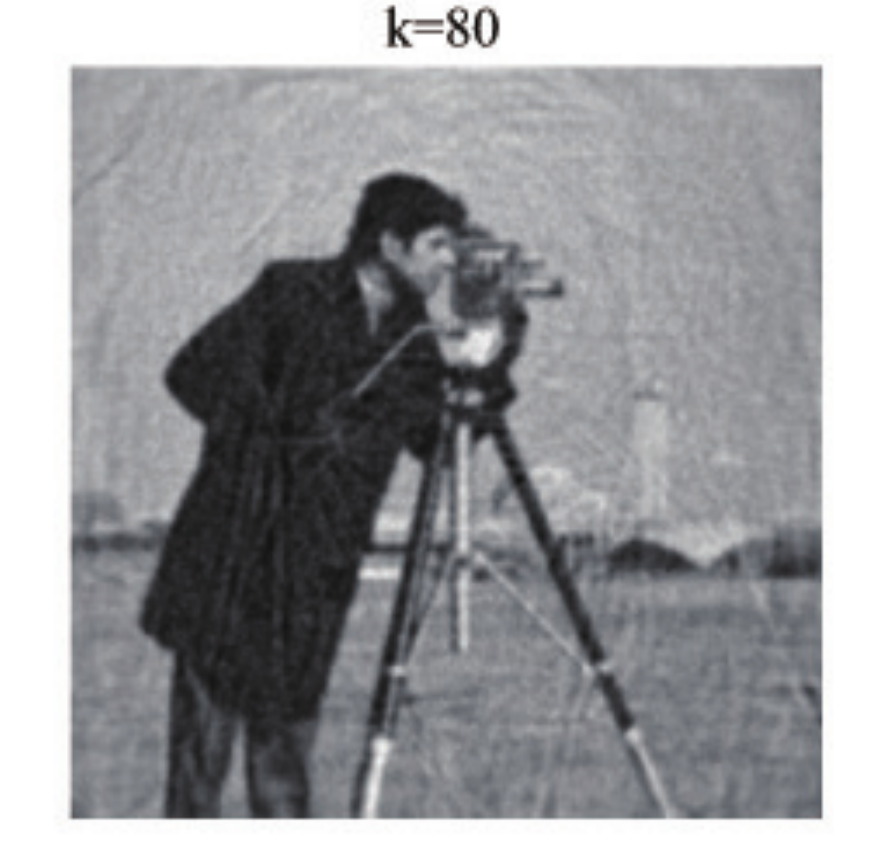}}
		\centerline{(b) $\varepsilon=10^{-3}$}
	\end{minipage}
	\vfill
	\begin{minipage}{0.48\linewidth}
		\centerline{\includegraphics[width=4cm,height=3.0cm]{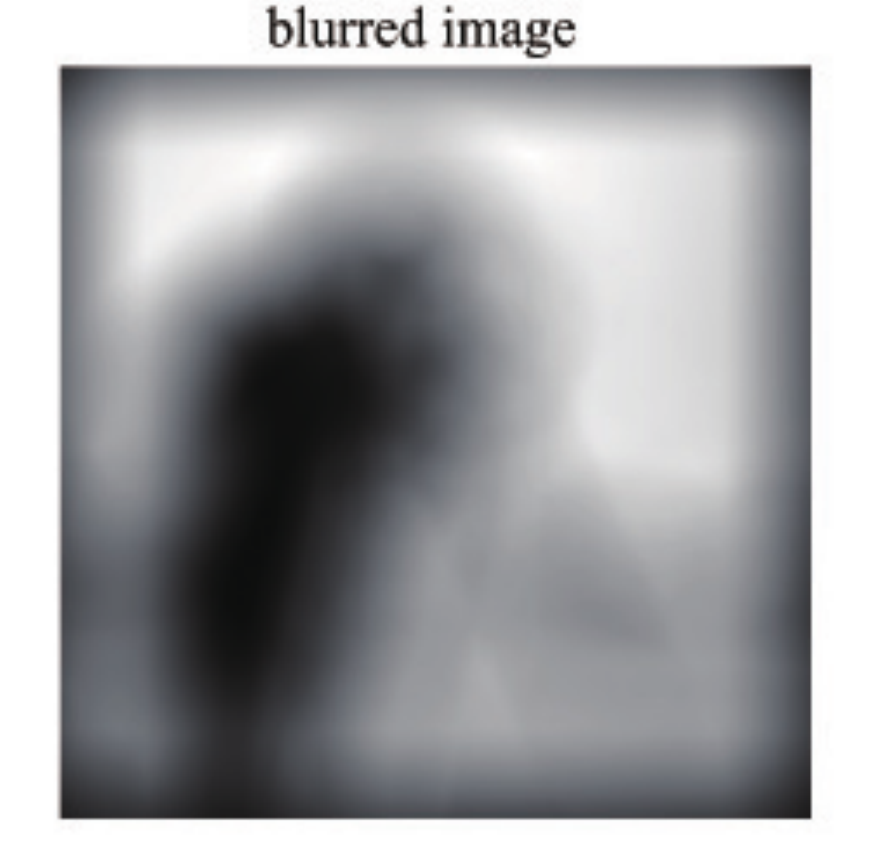}}
		\centerline{(c) $\varepsilon=10^{-4}$}
	\end{minipage}
	\hfill
	\begin{minipage}{0.48\linewidth}
		\centerline{\includegraphics[width=4cm,height=3.0cm]{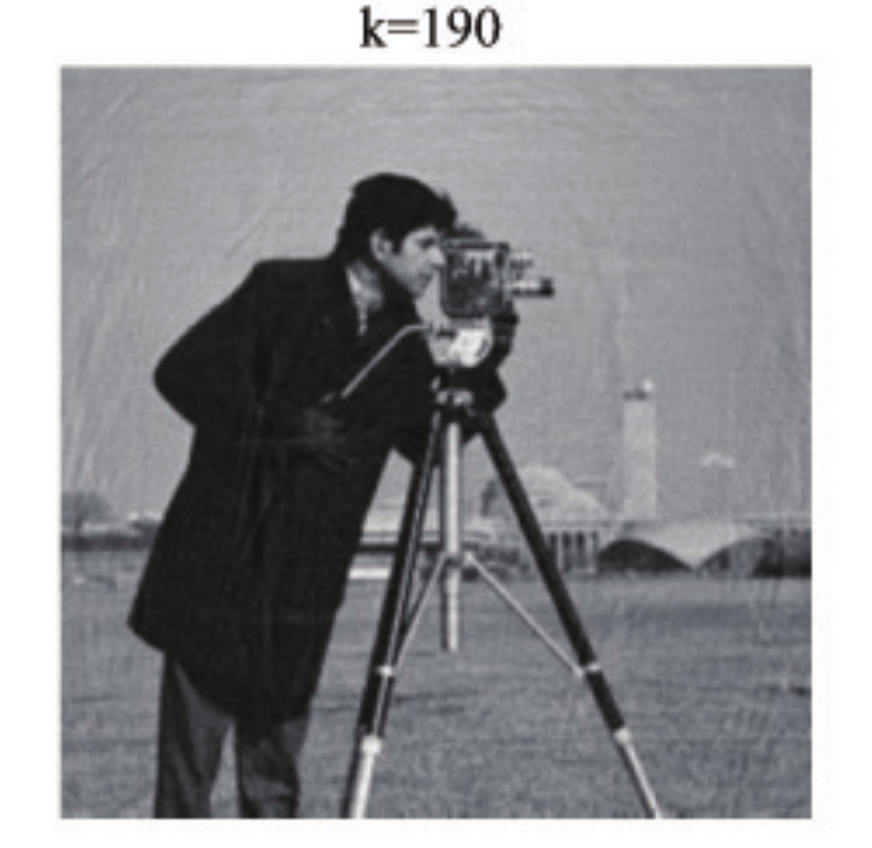}}
		\centerline{(d) $\varepsilon=10^{-4}$}
	\end{minipage}
	\caption{Images ``Cameraman" blurred by {\sf PRblurdefocus} and restored by the mixed precision LSQR ``s+s'' at the optimal iteration: (a), (b) $\varepsilon=10^{-3}$; (c), (d) $\varepsilon=10^{-4}$.}
	\label{fig5.10}
\end{figure}

Table\ref{tab5.3} shows the relative errors of the regularized solutions at the semi-convergence point $k_0$ and the estimates by L-curve criterion and discrepancy principle. For {\sf PRblurspeckle}, the three optimal iterations $k_0$ and corresponding $\mathrm{RE}(k_0)$ for ``d", ``s+d" and ``s+s" are the same, while for {\sf PRblurdefocus} the $k_0$ for ``d" and ``s+d/s" are differed only by one and the corresponding $\mathrm{RE}(k_0)$ are the same. For the discrepancy principle, the estimates of $k_0$ are the same for ``d" and ``s+d/s", and the corresponding $\mathrm{RE}(k_0)$ is only slightly different for {\sf PRblurdefocus} with $\varepsilon=10^{-3}$. For the L-curve criterion, the estimates of $k_0$ and the corresponding $\mathrm{RE}(k_0)$ for ``d", ``s+d" and ``s+s" are not the same for some cases, but the differences are very slight.

From the above experimental results, we can make sure that single precision computing of LBFRO and updating $x_k$ is enough for LSQR for solving the 2-D image deblurring ill-posed problems if the noise level is not extremely small. For those large scale problems, the mixed precision variant of LSQR has a great potential of defeating the double precision implementation in computation efficiency. We will consider implementing and optimizing mixed precision codes of LSQR for large scale problems on a high performance computing architecture in the future work.

\begin{table}[htbp]
	\centering
	\caption{Comparison of relative errors $\mathrm{RE}(k)$ and estimates of the optimal iterations $k_0$ by L-curve criterion and DP ($\tau=1.001$).}
		\begin{tabular}{*{5}{c}}
			\toprule
			Work precision & \multicolumn{2}{c}{\sf PRblurspeckle} & \multicolumn{2}{c}{\sf PRblurdefocus} \\
			\cmidrule(lr){2-3} \cmidrule(lr){4-5}
			  &  $\varepsilon=10^{-2}$ & $\varepsilon=10^{-3}$ &  $\varepsilon=10^{-3}$ &  $\varepsilon=10^{-4}$ \\
			\midrule
			  &  \multicolumn{3}{c}{Optimal} &    \\
			\midrule
			d    & $0.1850$ ($34$) & $0.1102$ ($135$) & $0.1058$ ($80$) & $0.0542$ ($190$) \\
			s+d  & $0.1850$ ($34$) & $0.1102$ ($135$) & $0.1058$ ($81$) & $0.0542$ ($191$) \\
			s+s  & $0.1850$ ($34$) & $0.1102$ ($135$) & $0.1058$ ($81$) & $0.0542$ ($191$) \\
			\midrule
			&  \multicolumn{3}{c}{L-curve} &   \\
			\midrule
			d    & $0.1992$ ($47$) & $0.1105$ ($149$) & $0.1293$ ($96$) & $0.0764$ ($100$) \\
			s+d  & $0.1992$ ($47$) & $0.1105$ ($149$) & $0.1157$ ($91$) & $0.0771$ ($100$) \\
			s+s  & $0.1992$ ($47$) & $0.1103$ ($145$) & $0.1157$ ($91$) & $0.0771$ ($100$) \\
			\midrule
			&   \multicolumn{3}{c}{Discrepancy principle} &    \\
			\midrule
			d    & $0.1937$ ($24$) & $0.1200$ ($78$) & $0.1081$ ($74$) & $0.0599$ ($137$) \\
			s+d/s  & $0.1937$ ($24$) & $0.1200$ ($78$) & $0.1099$ ($74$) & $0.0599$ ($137$) \\
			\bottomrule[0.6pt]
	\end{tabular}
	\label{tab5.3}
\end{table}

\section{Conclusion}\label{sec6}
For the most commonly used iterative regularization algorithm LSQR for solving linear discrete ill-posed problems, we have investigated how to get a mixed precision implementation by analyzing the choice of proper computing precision for the two main parts of the algorithm, including the construction of Krylov subspace and updating procedure of iterative solutions. Based on the commonly used regularization model for linear inverse problems, we have shown that, for not extremely small noise levels, single precision is enough for computing Lanczos vectors with full reorthogonalization without loss of any accuracy of the final regularized solution. For the updating procedure, we have shown that the update of $x_k$ and $w_k$, which is the most time consuming part, can be performed using single precision without sacrificing any accuracy as long as $\kappa(\widehat{R}_k)$ is not very big that is almost always satisfied. Several numerical experiments are made to test two mixed precision variants of LSQR and confirm the theoretical results.

Our results indicate that several highly time consuming parts of the algorithm can be implemented using lower precisions, and provide a theoretical guideline for implementing a robust and efficient mixed precision variant of LSQR for solving discrete linear ill-posed problems. Future work includes developing practical C codes on high performance computing architectures for specific real applications.





%

\bibliographystyle{spmpsci}      
\bibliography{refs}

\end{document}